\newtheorem{theorem}{Theorem}[section]
\newtheorem{lemma}[theorem]{Lemma}
\newtheorem{proposition}[theorem]{Proposition}
\newtheorem{remark}{Remark}
\newtheorem{definition}{Definition}
\newcommand{\nc}{\newcommand}
\def\R{\mathbb{R}}
\def\C{\mathbb{C}}
\nc{\diff}[2]{\frac{d #1}{d #2}}
\nc{\diffn}[3]{\frac{d^{#3} #1}{d {#2}^{#3}}}
\nc{\pdiff}[2]{\frac{\partial #1}{\partial #2}}
\nc{\pdiffn}[3]{\frac{\partial^{#3} #1}{\partial{#2}^{#3}}}
\nc{\abs}[1] {\lvert #1 \rvert}
\nc{\norm}[2] {{\lVert #1 \rVert}_{#2}}
\nc{\threeline}[1] {\lvert \lvert \lvert #1 \rvert\rvert\rvert}
\nc{\iamMc}{\frac{i\alpha-M}{c}}
\nc{\iapMc}{\frac{i\alpha+M}{c}}
\nc{\bXYZ}{{\bf XYZ\ }}
\nc{\GW}{{\bf GW1\ }}
\nc{\Rem}{{\rm Rem}}
\nc{\loc}{{\rm loc}}
\nc{\cF}{{\cal F}}
\nc{\cG}{{\cal G}}
\nc{\cO}{{\cal O}}
\nc{\cQ}{{\cal Q}}
\nc{\cR}{{\cal R}}
\nc{\cS}{{\cal S}}
\nc{\sqrtE}{\mu}
\nc{\cI}{{\cal I}}
\nc{\cK}{{\cal K}}
\nc{\cL}{{\cal L}}
\nc{\cM}{{\cal M}}
\nc{\cN}{{\cal N}}
\nc{\cE}{{\cal E}}
\nc{\cH}{{\cal H}}
\nc{\cZ}{{\cal Z}}
\nc{\cT}{{\cal T}}
\nc{\rhoo}{{(z\cdot\xi)}}
\nc{\omegaa}{{(z\cdot\eta)}}
\nc{\order}{{\cal O}}
\nc{\ores}{{\omega_{\rm res}}}
\nc{\nit}{\noindent}
\nc{\Eplus}{E_+}
\nc{\Eminus}{E_-}
\nc{\Epm}{E_\pm}
\nc{\w}{\omega}
\nc{\eps}{\epsilon}
\nc{\e}{\varepsilon}
\nc{\g}{\gamma}
\nc{\z}{\zeta}
\nc{\G}{\Gamma}
\nc{\nn}{\nonumber}
\nc{\D}{\partial}
\nc{\pZ}{\partial_Z}
\nc{\pT}{\partial_T}
\nc{\pz}{\partial_z}
\nc{\pt}{\partial_t}
\nc{\vu}{\Vec u}
\nc{\vE}{\Vec {\cal E}}
\nc{\vr}{\Vec r}
\nc{\vrho}{\Vec \rho}
\nc{\Reps}{R^{\e}}
\nc{\Vreps}{\Vec \Reps}
\nc{\half}{\frac{1}{2}}
\nc{\bphi}{\bar{\phi}}
\nc{\efour}{{\Hat e}_4}
\nc{\marginnote}[1] {\marginpar{\tiny #1}}
\begin{document}
\title{A Resonance Problem in Relaxation of Ground States of Nonlinear Schr\"odinger Equations}\author{Zhou Gang\footnote{Partially supported by NSF grant DMS 1308985, 1443225} \
} \maketitle
\centerline{\small{ Department of Mathematics, California Institute of Technology, Mail Code 253-37,
Pasadena, California, 91125 U.S.A.}}
\setlength{\leftmargin}{.1in}
\normalsize \vskip.1in \setcounter{page}{1}
\setlength{\leftmargin}{-.2in} \setlength{\rightmargin}{-.2in}
\section*{Abstract}
In this paper we consider a resonance problem, in a generic regime, in the consideration of relaxation of ground states of semilinear Schr\"odinger equations. Different from previous results, our consideration includes the presence of resonance, resulted by overlaps of frequencies of different states. All the known key results, proved under non-resonance conditions, have been recovered uniformly. These are achieved by better understandings of normal form transformation and Fermi Golden rule. Especially, we find that if certain denominators are  zeros (or small), resulted by the presence of resonances (or close to it), then  cancellations between terms make the corresponding numerators proportionally small.  \vfil\eject
\tableofcontents
\section{Introduction}
We consider the following 3-dimensional semilinear Schr\"odinger equations
\begin{align}
i\partial_{t}\psi(x,t)=&-\Delta\psi(x,t)+V(x)\psi(x,t)+|\psi(x,t)|^{2}\psi(x,t), \label{eq:NLS}\\
\psi(x,0)=&\psi_{0}(x)\in \mathcal{H}^2(\mathbb{R}^3)\ \text{small}.\nonumber
\end{align}
where $V:\mathbb{R}^{3}\rightarrow \mathbb{R}$ is
the external potential.

Such equations arise in the theory of Bose-Einstein condensation,
nonlinear optics, theory of water waves and in other areas.

We start with formulating the problem. The potential $V:\ \mathbb{R}^{3}\rightarrow \mathbb{R}$ is a smooth, and rapidly decaying function. And if it is trapping potential, namely 
\begin{align}
\inf_{\|f\|_2=1}\langle f,\ (-\Delta+V)f\rangle=-e_0<0,
\end{align} then this linear unbounded self-adjoint operator $-\Delta+V$, mapping $\mathcal{L}^2$ into $\mathcal{L}^2$,  
has a ground state $\phi\in \mathcal{L}^2$ with eigenvalue $-e_0$. Moreover the eigenvalue must be simple. Besides the ground states the linear operator might have some other finitely many neutral modes with nonpositive eigenvalues $-e_k ,\ k=1,\cdots, N$.
Its continuous spectrum spans the interval $[0,\ \infty).$
It is well known that for the type of potential $V$ we chose, there is no positive eigenvalues, see e.g. \cite{RSIII}.

In the nonlinear setting, the ground state bifurcates into a family of solitary wave solutions, see e.g. \cite{TsaiYau02},
\begin{align}
\psi(x,t)=e^{i\lambda t}\phi^{\lambda}(x)
\end{align} with $\lambda\in \mathbb{R}$ being close to $e_0$ and $\phi^{\lambda}=C \sqrt{|e_0-\lambda|}\phi+O(|e_0-\lambda|^{\frac{3}{2}}).$

There is a rich literature on studying the orbital stability and
asymptotic stability of the soliton manifold. By results in
\cite{Wein1985, Wei86,GSS87} it is well known that that the ground state manifold is orbital stable in the
$\mathcal{H}^{1}$ space. After these, many attempts were made on
proving the asymptotic stability of the ground state manifold, see e.g. \cite{BP1, RoWe,TsaiYau02, MR1992875, BuSu, GS07, Cuccagna:03, SW-PRL:05, G1, NakanTsai12, ZwHo07}.

In \cite{BP1, RoWe,BuSu,TsaiYau02, MR1992875, SW-PRL:05}, it is assumed that the linear operator $-\Delta+V$ has a ground state (with eigenvalue $-e_0<0$), and only one simple neutral mode with eigenvalue $-e_1<0$ satisfying $2e_1<e_0.$
In \cite{Tsai2003}, multiple neutral modes was considered. Their eigenvalues $-e_k,\ k=1,\cdots,N$ must satisfy two conditions: (1) $2e_k< e_0$, and (2) the so-called non-resonance condition, namely there do not exist $n_k\in \mathbb{Z}, \ k=0,1,\cdots, N$ such that
\begin{align}
\sum |n_k|\not= 0\ \text{and}\ \sum_{k} n_k e_k=0,\label{eqn:nonRe1}
\end{align} see also the non-resonance conditions for multiple neutral modes in \cite{Cucca08, NakanTsai12}.

On the technical level, the condition \eqref{eqn:nonRe1} was needed to prevent small denominator from appearing.

In \cite{GaWe, GaWe2011}, the author, together with M. Weinstein, improved the above results by studying degenerate neutral modes, i.e.
\begin{align}
e_k=e_1, k=1,2,\cdots,  N,\label{eqn:nonRe2}
\end{align} or nearly degenerate.

The main purpose of the present paper is to include the presence of the resonance, specifically
by removing the conditions \eqref{eqn:nonRe1} and \eqref{eqn:nonRe2}, and to show all the proved results still hold, uniformly.

A graphic illustration is in Figure \ref{abc}.
\begin{figure}[!htb]
\centering
\includegraphics[width=11.5cm, height=2.5cm]{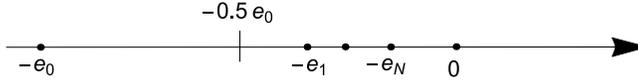}
\caption{spectrum of $-\Delta+V$}
\label{abc}
\end{figure}

On the technical level, we achieve this by re-defining normal form transformation and Fermi Golden rule. Especially we show that if some denominators are small, caused by the resonances, then their corresponding numerators are proportionally small, resulted by cancellations between terms. For the details, we refer to Sections \ref{sec:FGR} and \ref{NormalForms}, and Proposition \ref{prop:smallDivisor}.

To the best of our knowledge, our result and techniques are new.

{\bf{Related problems.}} The main motivation of the present work is to understand certain types of small divisor problems. In many literatures, for example in \cite{BourWang2008, Bene1988, Bovier86, BourgWang04}, in studying problems arising from statistical mechanics and PDE, the small divisor was avoided, by choosing initial conditions, for example.

The technical advantage in studying the present problem is that we only need to expand the solutions, in certain small parameters, finitely many times (two times), instead of infinitely many times as in \cite{BourWang2008}. We expect the normal form transformation invented here, which makes it easy to see the crucial cancellations between terms, together with algebraic structures observed in \cite{GS07, G1, Cucca08}, for higher order iterations, can be applied to the other problems. These will be addressed in subsequent papers.

{\bf{Structure of the paper.}}
The paper is organized as follows. Some basic properties of the equation are studied in Section \ref{HaGWP}. The linearized operator $L(\lambda)$, obtained by linearizing the ground state solution, is studied in Section \ref{sec:OperL}. The Fermi Golden rule condition is in Section \ref{sec:FGR}. The main Theorem is stated in Section \ref{MainTHM}. The decomposition of the solutions, and the governing equations for various parameters and functions are in Section \ref{SEC:effective}. The main Theorem is reformulated into Theorem \ref{GOLD:maintheorem} in Section \ref{sec:refor}. The proof of different parts of Theorem \ref{GOLD:maintheorem} are in Sections \ref{sec:tildeR}, \ref{NormalForms} and \ref{sec:NFT}. The main theorem is proved in Section \ref{ProveMain}.

Most parts of the paper follow the steps in the previous papers \cite{GaWe, GaWe2011}.  Technically the main differences are in normal form transformation in Section \ref{NormalForms} and subsequent sections.

\section*{Acknowledgments} The author wishes to thank Avy Soffer and Wei-Min Wang for a discussion on small divisor problems.

The author is partly supported by NSF Grant DMS 1308985, 1443225. 

\newpage

\section{Notation}\label{notation}
\begin{itemize}
\item[(1)]\ 
$
\alpha_+ = \max\{\alpha,0\},\ \ [\tau]=\max_{\tilde\tau\in Z}\ \{\tilde\tau\le\tau\}
$
\item[(2)]\ $\Re z$ = real part of $z$,\ \ $\Im z$ = imaginary part of $z$
\item[(3)] Multi-indices
\begin{align}
z&=(z_1,\dots, z_N)\ \in C^N,\ \bar{z}=(\overline{z_1},\dots, \overline{z_N})\\
&a\in \mathbb{N}^{N},\ z^a=z_1^{a_1}\cdot\cdot\cdot z_N^{a_N}\nonumber\\
|a|\ &=\ |a_1|\ +\ \dots\ +\ |a_N|
\nonumber
\end{align}

\item[(4)]\ $\mathcal{H}^s$\ =\ Sobolev space of order $s$
\item[(5)]  \begin{equation}
J\ =\ \left(\begin{array}{cc} 0 & 1\\ -1 & 0\end{array}\right),
\ \ H\ =\ \left(\begin{array}{cc} L_+ & 0\\ 0 & L_-\end{array}\right),\ \ 
 L=JH=\left(\begin{array}{cc} 0 & L_-\\ -L_+ & 0\end{array}\right)
\nonumber\end{equation}
\item[(6)] $\sigma_{ess}(L)=\sigma_c(L)$ is the essential (continuous) spectrum of $L$,\\ $\sigma_d=\C - \sigma_c(L)$ is the discrete spectrum of $L$.
\item[(7)]\ $P_d(L)$ bi-orthogonal projection onto the discrete spectral part of $L$
\item[(8)]\ $P_c(L)=I-P_d(L)$, bi-projection onto the continuous spectral part of $L$
\item[(9)]\ $\langle f,g\rangle = \int\ f\ \bar{g} $
\item[(10)]\ $\| f\|_p=$ $L^p$ norm,\ \ $1\le p\le\infty$.
\end{itemize}
\vfil\eject 
\section{Basic Properties}\label{HaGWP}
Equation \eqref{eq:NLS} is a Hamiltonian system on Sobolev space
$\mathcal{H}^{1}(\mathbb{R}^{3},\mathbb{C})$ viewed as a real space
$\mathcal{H}^{1}(\mathbb{R}^{3},\mathbb{R})\oplus
\mathcal{H}^{1}(\mathbb{R}^{3},\mathbb{R})$. The
Hamiltonian functional is: $$\mathcal{E}(\psi):=\int
[\frac{1}{2}(|\nabla\psi|^{2}+V|\psi|^{2})+\frac{1}{4}|\psi|^{4}].$$ 

Equation \eqref{eq:NLS} has the time-translational and gauge
symmetries which imply the following conservation laws: for any
$t\geq 0,$ we have
\begin{enumerate}
 \item[(CE)] Conservation of energy:\ \ \ 
  $\mathcal{E}(\psi(t))=\mathcal{E}(\psi(0));$
 \item[(CP)]
 Conservation of particle number:\\
  $\mathcal{N}(\psi(t))=\mathcal{N}(\psi(0)),$ where\ \ \ \
   $\mathcal{N}(\psi):=\int
 |\psi|^{2}.$
\end{enumerate}

In what follows we review the results of the existence of
soliton and their properties.

The following arguments are almost identical
to those in \cite{RoWe,BuSu,TsaiYau02} except that here we have multiple neutral modes (or
excited states), hence we state the results
without proof. We assume that the linear operator $-\Delta+V$ has
the following properties
\begin{enumerate}
\item[(NL)]
The linear operator $-\Delta+V$ has eigenvalues $-e_{0}<-e_{1}\leq \cdots\leq -e_{N}$
satisfying $e_{0}<2e_{1}$. $-e_{0}$ is the lowest eigenvalue with
ground state $\phi>0$, the eigenvalue $-e_{1},\ \cdots,\ -e_{N}$ might be degenerate with eigenvectors
$\xi_{1}^{lin},\xi_{2}^{lin},\cdot\cdot\cdot,\xi_{N}^{lin}.$
\end{enumerate}

In the nonlinear setting the ground state bifurcates into a family of solitary wave solutions of \eqref{eq:NLS}, see e.g. \cite{TsaiYau02},
\begin{align}
\psi(x,t)=e^{i\lambda t}\phi^{\lambda}(x)\label{eq:soli2}
\end{align} 
and the function $\phi^{\lambda}>0$ has the following properties, see e.g. \cite{TsaiYau02}.
\begin{lemma}\label{LM:groundNon}
Suppose that the linear operator $-\Delta+V$ satisfies the
conditions in [NL] above. Then there exists a constant
$\delta_{0}>0$ such that for any $\lambda \in [e_{0}-\delta,
e_{0})$ \eqref{eq:NLS} has solutions of the form
$\psi(x,t)=e^{i\lambda t}\phi^{\lambda}(x)\in \mathcal{L}^{2}$ with
\begin{align}\label{eq:phiAsy}
\phi^{\lambda}=\delta\phi+\cO(\delta^3)
\end{align} and
$\delta=(\int
\phi^{4}(x)dx)^{-\frac{1}{2}}(e_{0}-\lambda)^{1/2}+o((e_{0}-\lambda)^{1/2}),$
moreover
$$\partial_{\lambda}\phi^{\lambda}=O((e_{0}-\lambda)^{-1/2})\phi+o((e_{0}-\lambda)^{1/2}).$$
\end{lemma}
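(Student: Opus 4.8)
The plan is a Lyapunov--Schmidt reduction bifurcating from the simple eigenvalue $-e_0$ of $-\Delta+V$. Substituting the ansatz $\psi(x,t)=e^{i\lambda t}\phi^\lambda(x)$ into \eqref{eq:NLS} and taking $\phi^\lambda$ real reduces the problem to the stationary profile equation
\[
(-\Delta+V+\lambda)\phi^\lambda+(\phi^\lambda)^3=0 .
\]
Since $\lambda<e_0$ is close to $e_0$, I would set $\lambda=e_0-\mu^2$ with $\mu=(e_0-\lambda)^{1/2}>0$ small and rescale $\phi^\lambda=\mu\,\eta$, which turns the profile equation into
\[
(-\Delta+V+e_0)\,\eta=\mu^2\big(\eta-\eta^3\big) .
\]
By [NL], $-e_0$ is the lowest, simple, isolated eigenvalue of $-\Delta+V$, with eigenfunction $\phi$ normalized by $\|\phi\|_2=1$; hence $0$ is a simple isolated eigenvalue of $-\Delta+V+e_0$, and restricted to $\{\phi\}^{\perp}$ this operator has a bounded inverse $R$ sending $\{\phi\}^{\perp}\cap L^2$ into $\{\phi\}^{\perp}\cap\mathcal H^2$ by elliptic regularity.

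Next I would write $\eta=c\,\phi+h$ with $c=\langle\eta,\phi\rangle\in\mathbb R$ and $h\in\{\phi\}^{\perp}$, and project the rescaled equation onto $\{\phi\}^{\perp}$ and onto $\phi$, with $P_\perp$ the orthogonal projection off $\phi$:
\begin{align}
h&=\mu^2\,R\,P_\perp\big((c\phi+h)-(c\phi+h)^3\big)=:\mathcal G(h;c,\mu),\nonumber\\
0&=\big\langle(c\phi+h)-(c\phi+h)^3,\ \phi\big\rangle=:g(c,\mu).\nonumber
\end{align}
In $\mathbb R^3$ one has $\mathcal H^2\hookrightarrow L^\infty$, so the cubic map is smooth on $\mathcal H^2$; for $\mu$ small and $c$ in a fixed bounded interval $\mathcal G(\cdot;c,\mu)$ is a contraction on a small ball of $\{\phi\}^{\perp}\cap\mathcal H^2$, giving a unique fixed point $h=h(c,\mu)$, smooth in $(c,\mu)$, with $\|h(c,\mu)\|_{\mathcal H^2}=\cO(\mu^2)$ and $h(c,0)=0$ (indeed $h=-\mu^2c^3RP_\perp\phi^3+\cO(\mu^4)$).

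Substituting $h(c,\mu)$ into the scalar equation and using $\langle h,\phi\rangle=0$, $\|\phi\|_2=1$, $\|h\|=\cO(\mu^2)$ gives
\[
g(c,\mu)=c\Big(1-c^2\!\int\phi^4\,dx\Big)+\cO(\mu^2).
\]
At $\mu=0$ the positive root is $c_\ast=\big(\int\phi^4\,dx\big)^{-1/2}$, and $\partial_cg(c_\ast,0)=1-3c_\ast^2\int\phi^4\,dx=-2\neq0$, so the implicit function theorem yields a unique $c=c(\mu)$ near $c_\ast$ with $g(c(\mu),\mu)=0$, smooth in $\mu$; since $g$ depends on $\mu$ only through $\mu^2$, $c(\mu)=c_\ast+\cO(\mu^2)$. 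Then $\phi^\lambda=\mu\big(c(\mu)\phi+h(c(\mu),\mu)\big)$; setting $\delta:=\langle\phi^\lambda,\phi\rangle=\mu\,c(\mu)=\big(\int\phi^4\,dx\big)^{-1/2}(e_0-\lambda)^{1/2}+o\big((e_0-\lambda)^{1/2}\big)$, the remainder $\phi^\lambda-\delta\phi=\mu\,h(c(\mu),\mu)\in\{\phi\}^{\perp}$ has $\mathcal H^2$-norm $\cO(\mu^3)=\cO(\delta^3)$, which is \eqref{eq:phiAsy}; positivity of $\phi^\lambda$ for small $\mu$ follows from $\phi^\lambda=\mu c_\ast\phi+\cO(\mu^3)$ and $\phi>0$ (or directly from the maximum principle applied to the profile equation). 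For the last claim I would differentiate the reduced formula in $\lambda$ using $\partial_\lambda\mu=-\tfrac12\mu^{-1}$: $\partial_\lambda\phi^\lambda=(\partial_\lambda\delta)\phi+\partial_\lambda(\mu h)$, where $\partial_\lambda\delta=-\tfrac12c_\ast\mu^{-1}+\cO(\mu)=\cO((e_0-\lambda)^{-1/2})$ and, since $h=\cO(\mu^2)$ with $\partial_\mu h=\cO(\mu)$, $\partial_\lambda(\mu h)=\cO(\mu)=o((e_0-\lambda)^{1/2})$.

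The only delicate point is the $(e_0-\lambda)^{-1/2}$ blow-up of $\partial_\lambda\phi^\lambda$. Differentiating the profile equation directly gives $\big[(-\Delta+V+\lambda)+3(\phi^\lambda)^2\big]\partial_\lambda\phi^\lambda=-\phi^\lambda$, and the operator $L_+(\lambda):=-\Delta+V+\lambda+3(\phi^\lambda)^2$ satisfies $L_+(\lambda)\phi=(\lambda-e_0)\phi+3(\phi^\lambda)^2\phi=\cO(e_0-\lambda)$, so $\phi$ is an approximate kernel element and $L_+(\lambda)$ carries an eigenvalue of size $\cO(e_0-\lambda)$; inverting $L_+(\lambda)$ against $\phi^\lambda\sim\delta\phi$ therefore amplifies by $\delta^{-1}$. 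Doing the reduction in the rescaled variable $\mu$ rather than in $\lambda$ isolates this singular factor cleanly and avoids having to analyze $L_+(\lambda)^{-1}$ at all. Everything else — invertibility of $-\Delta+V+e_0$ on $\{\phi\}^{\perp}$, smoothness of the cubic on $\mathcal H^2(\mathbb R^3)$, and the contraction and implicit function arguments — is routine, exactly as in \cite{RoWe,BuSu,TsaiYau02}; the smoothness and rapid decay of $V$ (hence of $\phi$) enter only to place $h$, and therefore $\phi^\lambda-\delta\phi$, in the Sobolev (and weighted) spaces used later in the paper.
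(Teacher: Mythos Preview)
The paper does not give its own proof of this lemma; it explicitly states the result without proof, referring to \cite{RoWe,BuSu,TsaiYau02}. Your Lyapunov--Schmidt reduction off the simple eigenvalue $-e_0$ is exactly the standard argument found in those references, and it is carried out correctly: the rescaling $\lambda=e_0-\mu^2$, $\phi^\lambda=\mu\eta$, the contraction for $h\in\{\phi\}^\perp$, and the implicit function theorem on the scalar bifurcation equation are all sound and yield the stated asymptotics.

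One small slip: at the end you write $\partial_\lambda(\mu h)=\cO(\mu)=o((e_0-\lambda)^{1/2})$, but $\cO(\mu)$ is $\cO((e_0-\lambda)^{1/2})$, not little-$o$. This is harmless, since the lemma's ``$o$'' is itself apparently a typo---the paper later states and uses the bound with big-$\cO$ (see \eqref{eq:LambdaPhi2}), which is what your argument actually delivers.
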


\section{The Linearized Operator}\label{sec:OperL}
After linearizing the solution around solitary wave solution \eqref{eq:soli2}, namely considering the solution of \eqref{eq:NLS} $\psi(x,t)=e^{i\lambda t} [\phi^{\lambda}(x)+R(x,t)]$, then the linear part of the equation for $R(x,t)$ is
\begin{align}
\partial_t \vec{R}=L(\lambda)\vec{R}
\end{align} with $\vec{R}:=(ReR,\ ImR)^{T},$
and the linearized operator
$L(\lambda)$ is defined as
\begin{align}\label{eq:defLLambda}
L(\lambda):=\left[
\begin{array}{cc}
0& L_{-}(\lambda)\\
-L_{+}(\lambda) &0
\end{array}
\right]
\end{align} with $L_{\pm}(\lambda)$ being linear Schr\"odinger operators defined as
\begin{align*}
L_{-}(\lambda):=-\Delta+V+\lambda+(\phi^{\lambda})^2,\ \text{and}\ L_{+}(\lambda):=-\Delta+V+\lambda+3(\phi^{\lambda})^2.
\end{align*}

By general result (Weyl's Theorem) on stability of the essential spectrum for localized perturbations of $J(-\Delta)$ 
\cite{RSIV}, 
$$\sigma_{ess}(L(\lambda))=(-i\infty,-i\lambda]\cap
[i\lambda,i\infty)$$ if the potential $V$ in Equation
\eqref{eq:NLS} decays at $\infty$ sufficiently rapidly.

Next we study the eigenvalues and eigenvectors of $L(\lambda).$ The proof can be found in \cite{GaWe}, hence we omit it.
\begin{lemma}\label{LM:NearLinear}
Let $L(\lambda)$, or more explicitly,  $L(\lambda(\delta),\delta)$
denote the linearized operator about the the bifurcating state
$\phi^\lambda, \lambda=\lambda(\delta)$. Note that $\lambda(0)=
e_0$. 

It has an eigenvector $\left(
\begin{array}{cc}
0\\
\phi^{\lambda}
\end{array}
\right)$ and an associated eigenvector $\left(
\begin{array}{cc}
\partial_{\lambda}\phi^{\lambda}\\
0
\end{array}
\right)$ with eigenvalue $0$:
\begin{align}
L(\lambda)\left(
\begin{array}{cc}
0\\
\phi^{\lambda}
\end{array}
\right)=0,\ \ \ \ L(\lambda)\left(
\begin{array}{cc}
\partial_{\lambda}\phi^{\lambda}\\
0
\end{array}
\right)=\left(
\begin{array}{cc}
0\\
\phi^{\lambda}
\end{array}
\right).
\end{align}

Corresponding to the (possibly degenerate) eigenvalue, $-e_1,\ -e_2,\ \cdots, \ -e_{N}$, of
$-\Delta+V$, the matrix operator $$L(\lambda=e_0,\delta=0)$$ has eigenvalues 
\begin{align}
\pm iE_{n}(e_0)=\pm i(e_0-e_n),\ n=1,2,\cdots, N.\label{eq:Enen}
\end{align}
For $\delta>0$ and small, these bifurcate to
(possibly degenerate) eigenvalues, of the operator $L(\lambda),$ $\pm iE_1(\lambda),\dots,$ $\pm
iE_N(\lambda)$ with eigenvectors
\begin{align}
\left(
\begin{array}{lll}
\xi_{1}\\
\pm i\eta_{1}
\end{array}
\right),\ \left(
\begin{array}{lll}
\xi_{2}\\
\pm i\eta_{2}
\end{array}
\right),\ \cdot\cdot\cdot, \left(
\begin{array}{lll}
\xi_{N}\\
\pm i\eta_{N}
\end{array}
\right)\label{eq:eigenf}
\end{align} with $\xi_n,\ \eta_n$ being real functions and
\begin{align}
\langle \phi^{\lambda},\ \xi_n\rangle=\langle \partial_{\lambda}\phi^{\lambda},\ \eta_n\rangle=0,\ \langle \xi_{n},\eta_{m}\rangle=\delta_{m,n}.\label{eq:orthogonality}
\end{align}
Moreover, for $\delta$ sufficiently small  $2E_n(\lambda)>\lambda,\
n=1,2,\cdot\cdot\cdot,N,$ (resonance at second order with
radiation).

\end{lemma}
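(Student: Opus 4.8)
The plan is to establish the four assertions of Lemma~\ref{LM:NearLinear} in turn; only the persistence of the bifurcating eigenvalues on the imaginary axis is genuinely delicate. For the kernel and its Jordan block: the profile of Lemma~\ref{LM:groundNon} solves $-\Delta\phi^{\lambda}+V\phi^{\lambda}+\lambda\phi^{\lambda}+(\phi^{\lambda})^{3}=0$, i.e.\ $L_{-}(\lambda)\phi^{\lambda}=0$, and differentiating this identity in $\lambda$ yields $L_{+}(\lambda)\partial_{\lambda}\phi^{\lambda}=-\phi^{\lambda}$. Substituting into the block form \eqref{eq:defLLambda} gives $L(\lambda)(0,\phi^{\lambda})^{T}=0$ and $L(\lambda)(\partial_{\lambda}\phi^{\lambda},0)^{T}=(0,\phi^{\lambda})^{T}$, which exhibits the $2\times 2$ Jordan block at $0$; the regularity and decay of $\phi^{\lambda},\partial_{\lambda}\phi^{\lambda}$ are those furnished by Lemma~\ref{LM:groundNon}.

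Next, since $\phi^{\lambda}=\delta\phi+\cO(\delta^{3})$ with $\delta\to 0$ as $\lambda\to e_{0}$, we have $\phi^{e_{0}}=0$, hence $L_{\pm}(e_{0})=A_{0}:=-\Delta+V+e_{0}$ and $L(e_{0},0)=J\,\mathrm{diag}(A_{0},A_{0})$. Because $J$ has eigenvalues $\pm i$ with eigenvectors $(1,\pm i)^{T}$ and $A_{0}u_{n}=(e_{0}-e_{n})u_{n}$ on each $\mathcal{L}^{2}$-eigenspace of $-\Delta+V$ at $-e_{n}$, the nonzero discrete spectrum of $L(e_{0},0)$ is $\{\pm i(e_{0}-e_{n})\}=\{\pm iE_{n}(e_{0})\}$, with eigenvectors $(u_{n},\pm iu_{n})^{T}$ and the multiplicity of $-e_{n}$; in the generic non-threshold case $0<e_{n}<e_{0}$ these lie strictly inside the gap $\C\setminus\sigma_{ess}(L(e_{0},0))$, isolated of finite multiplicity. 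The map $\delta\mapsto L(\lambda(\delta),\delta)$ is analytic near $\delta=0$ (the profile depends analytically on $\delta$ via the bifurcation behind Lemma~\ref{LM:groundNon}), so surrounding $iE_{n}(e_{0})$ by a small circle $\Gamma_{n}$ separating it from the rest of the spectrum, the Riesz projection $P_{n}(\delta)=-\tfrac{1}{2\pi i}\oint_{\Gamma_{n}}(L(\lambda(\delta),\delta)-z)^{-1}\,dz$ is analytic of constant rank, and the same total multiplicity of eigenvalues persists inside $\Gamma_{n}$, tending to $iE_{n}(e_{0})$. To keep these on $i\R$ I would invoke the Hamiltonian structure $L=JH$, $H=H^{*}$, $J^{*}=-J$: a short computation gives $L^{*}=-JLJ^{-1}$, so $\sigma(L)$ is invariant under $\Lambda\mapsto-\bar\Lambda$ (and under $\Lambda\mapsto\bar\Lambda$, $L$ being real); and at $\delta=0$ the energy form $\langle H\,\cdot\,,\cdot\rangle$ on the eigenspace of $iE_{n}(e_{0})$ equals $(e_{0}-e_{n})\langle\cdot,\cdot\rangle>0$, so the cluster has definite Krein signature and can neither split off $i\R$ nor develop Jordan blocks under a small perturbation. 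Hence the perturbed eigenvalues are semisimple, purely imaginary, $iE_{n}(\lambda)$ with $E_{n}(\lambda)\in\R$, $E_{n}(\lambda)\to e_{0}-e_{n}$.

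For the eigenvectors, the eigenvalue equation for $(a,b)^{T}$ reads $L_{-}(\lambda)b=iE_{n}a$ and $L_{+}(\lambda)a=-iE_{n}b$, whence $L_{-}(\lambda)L_{+}(\lambda)a=E_{n}^{2}a$; since $L_{-}L_{+}$ is real with real eigenvalue $E_{n}^{2}$, its eigenspace has a real basis, so one may take $a=\xi_{n}$ real, forcing $b=\tfrac{i}{E_{n}}L_{+}(\lambda)\xi_{n}=:i\eta_{n}$ with $\eta_{n}$ real, which yields the eigenvectors $(\xi_{n},\pm i\eta_{n})^{T}$ together with $L_{+}(\lambda)\xi_{n}=E_{n}\eta_{n}$ and $L_{-}(\lambda)\eta_{n}=E_{n}\xi_{n}$. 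From these, $E_{n}\langle\xi_{n},\phi^{\lambda}\rangle=\langle L_{-}\eta_{n},\phi^{\lambda}\rangle=\langle\eta_{n},L_{-}\phi^{\lambda}\rangle=0$ and $E_{n}\langle\eta_{n},\partial_{\lambda}\phi^{\lambda}\rangle=\langle L_{+}\xi_{n},\partial_{\lambda}\phi^{\lambda}\rangle=\langle\xi_{n},L_{+}\partial_{\lambda}\phi^{\lambda}\rangle=-\langle\xi_{n},\phi^{\lambda}\rangle=0$, giving the first two identities in \eqref{eq:orthogonality}; moving $L_{-}L_{+}$ across the inner product gives $(E_{n}^{2}-E_{m}^{2})\langle\xi_{n},\eta_{m}\rangle=0$, so the pairing is block-diagonal across distinct $E_{n}$, and inside a degenerate block it is an $\cO(\delta^{2})$ perturbation of $\langle u_{n},u_{m}\rangle=\delta_{nm}$, hence nondegenerate, so a normalization imposed at $\delta=0$ and propagated in $\delta$ gives $\langle\xi_{n},\eta_{m}\rangle=\delta_{nm}$. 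Lastly $E_{n}(\lambda)=e_{0}-e_{n}+\cO(\delta^{2})$ and $\lambda=e_{0}+\cO(\delta^{2})$, so $2E_{n}(\lambda)-\lambda=(e_{0}-2e_{n})+\cO(\delta^{2})>0$ for small $\delta$ by the gap hypothesis [NL] (which forces $e_{0}>2e_{n}$ for all $n$), placing $2E_{n}(\lambda)$ strictly inside the continuous spectrum $[\lambda,\infty)$, i.e.\ resonance with radiation at second order.

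The main obstacle is the third step above: plain perturbation theory locates the bifurcating eigenvalues only near $iE_{n}(e_{0})$ and does not by itself keep them on $i\R$, which is exactly where the Hamiltonian/Krein-signature structure is indispensable; it is also where degeneracy of $-e_{n}$ needs care, since one must simultaneously exclude escape from $i\R$ and the formation of Jordan blocks within the cluster. A secondary wrinkle is that $L$ and $L_{-}L_{+}$ are not self-adjoint, so the reduction to real eigenvectors and the persistence of a real, positive, block-simple eigenvalue $E_{n}^{2}$ must be deduced from the self-adjoint model at $\delta=0$ through the analyticity in $\delta$ supplied by Lemma~\ref{LM:groundNon}.
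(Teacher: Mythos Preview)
The paper itself omits the proof of this lemma entirely, deferring to \cite{GaWe}. Your argument is correct and is essentially the standard one (and matches the approach in \cite{GaWe,GaWe2011}): the Jordan block at zero comes directly from differentiating the soliton equation; the discrete spectrum at $\delta=0$ is read off from the decoupled scalar operator $-\Delta+V+e_0$; persistence of purely imaginary eigenvalues uses the Hamiltonian symmetry $L^{*}=-JLJ^{-1}$ together with definiteness of the Krein form $\langle H\cdot,\cdot\rangle$ on each cluster; the real structure of the eigenvectors is obtained from the real operator $L_{-}L_{+}$; and the orthogonality relations $\langle\phi^{\lambda},\xi_{n}\rangle=\langle\partial_{\lambda}\phi^{\lambda},\eta_{n}\rangle=0$ follow exactly as you wrote, by moving $L_{\pm}$ across the inner product.

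One small sharpening: for the bi-orthogonality $\langle\xi_{n},\eta_{m}\rangle=\delta_{nm}$ inside a degenerate block, the cited paper argues via the positivity $\langle\xi,\eta\rangle=E^{-1}\langle L_{-}\eta,\eta\rangle>0$ (using that $L_{-}(\lambda)>0$ on $\{\phi^{\lambda}\}^{\perp}$), which gives a genuine positive-definite pairing on the block for all small $\delta$, and then runs Gram--Schmidt with respect to this pairing. This is slightly cleaner than your perturbative argument from $\delta=0$, since it works directly at each $\lambda$ without tracking a normalization through the bifurcation, but the content is the same.
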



Furthermore we need the following condition on the threshold resonances.
\begin{definition}
A function $h$ is called a threshold resonance function of
$-\Delta+V$ at $0$, the endpoint of the essential spectrum,
$|h(x)|\leq c\langle x\rangle^{-1_+}$ and $h$ is $C^{2}$ and
solves the equation
$$(-\Delta+V)h=0.$$

A function $h$ is called a threshold resonance function of
$L(\lambda)$ at $\mu=\pm i\lambda$, the endpoint of the essential spectrum, 
$|h(x)|\leq c\langle x\rangle^{-1_+}$ and $h$ is $C^{2}$ and
solves the equation
$$(L(\lambda)-\mu)h=0.$$
\end{definition}

In this paper we make the following assumption:
{\bf 
\begin{enumerate}
 \item[(SA)] $-\Delta+V$ has no threshold resonance at $0$.
\end{enumerate}
} This assumption is generic since it is known that the threshold resonance is unstable, see e.g. \cite{RSIII}. Based on this assumption it is well known that
\begin{lemma}
If $|e_0-\lambda|$ is sufficiently small and the assumption (SA) holds, then $L(\lambda)$ has no threshold resonances at $\mu=\pm i\lambda$, and $L(\lambda)$ has no other eigenvectors and eigenvalues besides the ones listed in Lemma \ref{LM:NearLinear}.

\end{lemma}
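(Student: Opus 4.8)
The plan is to combine two pieces of standard spectral perturbation theory: the stability of the absence of threshold resonances under small perturbations of the potential/operator, and the fact that the bifurcating operator $L(\lambda)$ is a small (relatively compact) perturbation of $L(e_0,0)$. First I would note that $L(\lambda) - L(e_0,0)$ is, by \eqref{eq:phiAsy}, an operator with coefficients $O(|e_0-\lambda|)$ multiplied by exponentially (or rapidly) decaying functions, so it is a small relatively bounded perturbation of $J(-\Delta)$. The operator $L(e_0,0)$ has essential spectrum $(-i\infty,-ie_0]\cup[ie_0,i\infty)$ with endpoints $\pm ie_0$; at these endpoints, the resonance equation $(L(e_0,0)\mp ie_0)h=0$ decouples into equations of the form $(-\Delta+V)h_1 = 0$ and $(-\Delta+V+2e_0)h_2=0$. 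The first has no nonzero solution decaying like $\langle x\rangle^{-1_+}$ precisely by assumption (SA); the second cannot have a threshold-type solution because $-2e_0$ lies strictly below the spectrum of $-\Delta+V$ and $(-\Delta+V+2e_0)$ is invertible on the relevant spaces. Hence $L(e_0,0)$ itself has no threshold resonance at $\pm ie_0$.

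Next I would promote this to $L(\lambda)$ for $|e_0-\lambda|$ small. The endpoints of $\sigma_{ess}(L(\lambda))$ are $\pm i\lambda$, which move continuously with $\lambda$, and the resonance condition can be written in terms of the resolvent $(L(\lambda)-\mu)^{-1}$ having (or not having) a bounded extension to weighted $L^2$ spaces as $\mu \to \pm i\lambda$ from outside the spectrum. The standard limiting-absorption/Fredholm argument (as in the references \cite{RSIII,RSIV} and used throughout \cite{GaWe,GaWe2011}) expresses the existence of a threshold resonance as the vanishing of a certain Fredholm determinant or the non-triviality of the kernel of $I + $ (a compact, norm-continuous-in-parameters operator). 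Since at $\lambda = e_0$ this kernel is trivial and the relevant compact operator depends norm-continuously on $\lambda$ (again using the rapid decay of $V$ and the $O(|e_0-\lambda|)$ bound on $(\phi^\lambda)^2$), the kernel stays trivial for $\lambda$ in a punctured neighborhood of $e_0$; this gives the first claim, no threshold resonance at $\mu = \pm i\lambda$.

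For the second claim — that $L(\lambda)$ has no eigenvalues or eigenvectors beyond those already listed in Lemma \ref{LM:NearLinear} — I would again use perturbation theory. Lemma \ref{LM:NearLinear} already identifies the eigenvalue $0$ (with its generalized eigenvector) and the pairs $\pm iE_n(\lambda)$ bifurcating from $\pm i(e_0-e_n)$. Any other eigenvalue of $L(\lambda)$ would either lie in a fixed compact subset of $\C$ away from these, or would have to emerge from the essential spectrum through its endpoints $\pm i\lambda$. The first possibility is excluded because at $\lambda = e_0$ the only discrete spectrum of $L(e_0,0)$ is $\{0\} \cup \{\pm i(e_0-e_n)\}_{n=1}^N$ (this is the linear spectral picture, which one reads off from $-\Delta+V$), and eigenvalues cannot appear out of nowhere under a small relatively compact perturbation — one invokes upper semicontinuity of the spectrum together with a contour-integral (Riesz projection) argument on small circles enclosing only the known eigenvalues, showing the total algebraic multiplicity inside each contour is preserved. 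The second possibility — an eigenvalue detaching from the endpoint of the essential spectrum — is exactly what is controlled by the absence of a threshold resonance: a threshold resonance is the necessary precursor to an embedded or newly-emerged endpoint eigenvalue, so ruling it out (as just done) rules this out too.

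The main obstacle I anticipate is the second possibility in the last paragraph: making rigorous the claim that no eigenvalue can slip out of the continuous spectrum near $\pm i\lambda$ as $\lambda$ varies. This requires the limiting absorption principle for $L(\lambda)$ uniformly near the thresholds together with the characterization of threshold resonances as the obstruction, and care is needed because $L$ is non-self-adjoint, so one cannot simply use the spectral theorem; instead one works with the Birman–Schwinger-type operator and its norm-continuity in $\lambda$. Fortunately, since the statement says the proof "can be found in \cite{GaWe}" for the analogous spectral lemma and here it is only a mild extension, I would in practice cite \cite{GaWe,GaWe2011} and \cite{RSIII,RSIV} for these limiting-absorption and Fredholm-determinant facts, and present only the short reduction showing that (SA) at $\lambda = e_0$ propagates to small $|e_0-\lambda|$ by norm-continuity of the associated compact operators.
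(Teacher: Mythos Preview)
Your proposal is a correct and standard proof sketch, but you should know that the paper does not actually prove this lemma: it simply asserts the result as ``well known'' immediately after stating assumption (SA), with an implicit appeal to \cite{RSIII} for the genericity and instability of threshold resonances. There is no argument in the paper to compare against.

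What you have written is essentially the argument one would give if asked to fill in the details: reduce to the unperturbed operator $L(e_0,0)$, use (SA) to rule out resonances there, and then propagate both conclusions to small $|e_0-\lambda|$ via norm-continuity of the relevant Birman--Schwinger/Fredholm operators and Riesz-projection counting. Your identification of the delicate point (eigenvalues emerging from the threshold for the non-self-adjoint $L(\lambda)$) is accurate, and your proposed resolution via the limiting absorption principle plus the characterization of threshold resonances as the obstruction is the right one; this is carried out in the references you cite (\cite{GaWe,GaWe2011}) for closely related operators. So your write-up is more than the paper provides, and is sound.
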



We denote the projection onto the essential spectrum of linear operator $L(\lambda)$ is
$P_c^{\lambda}=1-P_{d}^{\lambda}.$ In the following we give the explicit form of the projection $P_{d}$, whose proof for $N=1$ can be found in
~\cite{MR2187292}, the proof of the general cases are similar, hence
omitted.
\begin{proposition}\label{Riesz-project}
For the non self-adjoint operator $L(\lambda)$ the (Riesz)
projection onto the discrete spectrum subspace of $L(\lambda)$, $P_d=P_d(L(\lambda))=P_d^\lambda$, is given by
\begin{equation}\label{eq:PdProjection}
\begin{array}{lll}
P_{d}&=&\frac{2}{\partial_{\lambda}\| \phi^{\lambda}\|^{2}}\left(\ \left|
\begin{array}{lll}
0\\
\phi^{\lambda}
\end{array}
\right\rangle \left\langle
\begin{array}{lll}
0\\
\partial_{\lambda}\phi^{\lambda}
\end{array}
\right|\ +\ \left|
\begin{array}{lll}
\partial_{\lambda}\phi^{\lambda}\\
0
\end{array}
\right\rangle \left\langle
\begin{array}{lll}
\phi^{\lambda}\\
0
\end{array}
\right|\ \right)\\
& &\\
& &-i\displaystyle\sum_{n=1}^{N}\left(\ \left|
\begin{array}{lll}
\xi_{n}\\
i\eta_{n}
\end{array}
\right\rangle\left\langle
\begin{array}{lll}
-i\eta_{n}\\
\xi_{n}
\end{array}
\right| \ -\  \left|
\begin{array}{lll}
\xi_{n}\\
-i\eta_{n}
\end{array}
\right\rangle\left\langle
\begin{array}{lll}
i\eta_{n}\\
\xi_{n}
\end{array}
\right|\ \right).
\end{array}
\end{equation}
\end{proposition}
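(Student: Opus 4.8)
The statement to prove is Proposition \ref{Riesz-project}, giving the explicit form of the Riesz projection $P_d$ onto the discrete spectral subspace of the non-self-adjoint operator $L(\lambda)$.

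The plan is to construct $P_d$ as the sum of rank-one (or rank-two) spectral projections attached to each eigenvalue of $L(\lambda)$, using the biorthogonal system formed by the eigenvectors of $L(\lambda)$ and those of its adjoint $L(\lambda)^*$. First I would recall from Lemma \ref{LM:NearLinear} the complete list of discrete-spectrum data: the generalized eigenspace at $0$, spanned by $(0,\phi^\lambda)^T$ and the associated vector $(\partial_\lambda\phi^\lambda,0)^T$ (a Jordan block of size two, since $L(\lambda)(0,\phi^\lambda)^T=0$ and $L(\lambda)(\partial_\lambda\phi^\lambda,0)^T=(0,\phi^\lambda)^T$), together with the eigenvectors $(\xi_n,\pm i\eta_n)^T$ at the eigenvalues $\pm iE_n(\lambda)$. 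Then I would identify the corresponding eigenvectors of $L(\lambda)^*$. Since $L(\lambda)=JH$ with $J^*=-J=J^{-1}$ and $H^*=H$, one has $L(\lambda)^*=HJ^* = -HJ = J^{-1}(JH)J \cdot(\dots)$; more usefully, $L^* = H J^{-1} \cdot(\text{conjugation})$, and the eigenvectors of $L^*$ at $\mp i\bar\mu$ are obtained by applying $J$ (equivalently $H$) to those of $L$ at $\mu$. Concretely, $(\partial_\lambda\phi^\lambda,0)^T$ pairs with $(0,\partial_\lambda\phi^\lambda)^T$-type covectors, and $(\xi_n, i\eta_n)^T$ pairs with $(-i\eta_n,\xi_n)^T$, as reflected in the stated formula.

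The core computation is then to verify two things: (i) the proposed operator $P_d$ acts as the identity on each discrete eigenvector/generalized eigenvector, and annihilates $P_c^\lambda$-range, i.e. $P_d^2 = P_d$ and $\operatorname{ran} P_d$ equals the discrete spectral subspace; and (ii) $L(\lambda)$ commutes with $P_d$. Step (i) reduces to checking the biorthogonality relations. Using \eqref{eq:orthogonality}, namely $\langle\xi_n,\eta_m\rangle=\delta_{mn}$, $\langle\phi^\lambda,\xi_n\rangle=\langle\partial_\lambda\phi^\lambda,\eta_n\rangle=0$, one computes the pairings of the four families of bra-vectors in \eqref{eq:PdProjection} against the four families of ket eigenvectors and finds the identity matrix; the normalization constant $\frac{2}{\partial_\lambda\|\phi^\lambda\|^2}$ is exactly what makes the $0$-eigenvalue block idempotent, since $\langle (0,\partial_\lambda\phi^\lambda)^T,(0,\phi^\lambda)^T\rangle = \langle\partial_\lambda\phi^\lambda,\phi^\lambda\rangle = \tfrac12\partial_\lambda\|\phi^\lambda\|^2$. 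The relative factors $-i$ and the sign pattern $-$ between the two terms in the sum over $n$ come from $\langle(\xi_n,i\eta_n)^T,(-i\eta_n,\xi_n)^T\rangle$-type inner products, which one checks evaluate to a pure imaginary constant that the prefactor $-i$ corrects.

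The one genuine subtlety — and the step I expect to be the main obstacle to present cleanly rather than technically hard — is justifying that this biorthogonal sum is in fact the Riesz projection $\frac{1}{2\pi i}\oint (\mu - L(\lambda))^{-1}\,d\mu$ around the discrete spectrum, i.e. that there is no missing discrete spectral data and that the essential spectrum is genuinely separated. That is precisely why the Proposition is stated only \emph{after} the preceding lemma guaranteeing (under assumption (SA) and $|e_0-\lambda|$ small) that $L(\lambda)$ has no threshold resonances and no eigenvalues or eigenvectors beyond those in Lemma \ref{LM:NearLinear}: this ensures the contour can be drawn and that the generalized eigenspace at $0$ is exactly two-dimensional (no larger Jordan block). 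Granting that input, the Riesz projection decomposes as a sum of contour integrals around each isolated eigenvalue, each of which is finite rank and equal to the corresponding spectral projector written via the biorthogonal basis; matching it term by term with \eqref{eq:PdProjection} and invoking uniqueness of the biorthogonal expansion finishes the proof. As noted in the text, the $N=1$ case is carried out in \cite{MR2187292} and the general case is a notationally heavier but structurally identical computation, so I would present the biorthogonality verification in detail and refer to \cite{MR2187292} for the contour-integral identification.
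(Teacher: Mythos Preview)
Your proposal is correct and is exactly the standard biorthogonal construction one would expect; the paper itself does not give a proof of this proposition at all, stating only that the $N=1$ case is in \cite{MR2187292} and that the general case is similar and hence omitted. Your sketch therefore supplies precisely the argument the paper leaves out, and your closing remark (verify biorthogonality explicitly, cite \cite{MR2187292} for the Riesz contour-integral identification) matches the paper's own disposition of the result.
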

We define the projection onto the continuous spectral  subspace of $L(\lambda)$ by
\begin{align} 
P_c\ =\ P_c(L(\lambda))\ =\ P_c^\lambda\ \equiv\ I\ -\ P_d\label{Pcdef}.
\end{align}

\section{The Negativity of Fermi Golden Rule in Matrix Form}\label{sec:FGR}
The Fermi Golden rule plays an essential role in determining the
decay rate of the neutral modes. For simple neutral modes, as in
\cite{SW99, TsaiYau02, BuSu, SW04},
the form is simple because the number is only one, or if more than one then they
can be separated into independent ones by a near identity
transformation (see ~\cite{Tsai2003, Cucca08}). The problem of multiple neutral modes is more
involved due to the fact that multiple coupled parameters appear and
they can not be separated. 

Next we define the new Fermi Golden Rule condition.

Define a function $e:\mathbb{R}^3\times \mathbb{R}^3\rightarrow \mathbb{C}$ as
\begin{align}
e(x,k):=\big[1+ (-\Delta_x-|k|^2-i0)^{-1}V(x) \big]^{-1}e^{ik\cdot x}.
\end{align}
It is known that for any fixed $k\not=0$, this function is well defined by the type of potential $V$ we chose, see e.g. \cite{RSIII}. And it satisfies the equation
\begin{align*}
[-\Delta_x+V(x)-|k|^2]\ e(x,k)=0.
\end{align*}

We define complex functions $\Psi_{m,n}$ on the 2-dimensional unit sphere $\mathbb{S}^2$. For any $\sigma\in \mathbb{S}^2$,
\begin{align}
\Psi_{m,n}(\sigma)=\Psi_{n,m}(\sigma):= \int_{\mathbb{R}^3} e(x,\ |k|_{m,n} \sigma)\phi(x)\ \xi^{lin}_m(x)\xi^{lin}_n(x)\ dx\label{eq:Psimn}
\end{align} with $|k|_{m,n}\in \mathbb{R}^{+}$ defined as 
\begin{align*}
|k|_{m,n}:=(e_0-e_m-e_n)^{\frac{1}{2}},
\end{align*} where, recall that we assume that $2e_{l}<e_0,\ l=1,2,\cdots, N,$ and $\phi$, $\xi_{m}^{lin}$ are eigenvectors of $-\Delta+V$ with eigenvalues $-e_0$ and $-e_{m},\ m=1,\cdots,\ N.$

Now we state our Fermi-Golden-rule condition.
\begin{itemize}
\item[(FGR)] For any scalar vector $z=(z_1,z_2,\cdots,z_n)\in \mathbb{C}^{N}$ satisfying $|z|=1$, the functions defined on the unit sphere $\mathbb{S}^2$,
$ \displaystyle\sum_{m,n=1}^{N}\Psi_{m,n}(\sigma) z_{m}\bar{z}_{n} $ is not identically zero.
\end{itemize}

Its important ramification is that there exists some constant $C>0$, such that for any $z=(z_1,\ z_2,\cdots,\ z_n)\in \mathbb{C}^{N},$
\begin{align}
\Gamma(z,\bar{z}):=\| \sum_{m,\ n} \Psi_{m,n}(\sigma)z_m z_n\|_{\mathcal{L}^2(\mathbb{S}^2)}^2\geq C |z|^4. \label{Gammadef}
\end{align} Here we use that $\displaystyle\sum_{m,\ n} \Psi_{m,n}(\sigma)z_m z_n$ is smooth in $\sigma$, hence if it is not identically zero, we have the estimate above.

\begin{remark}

If the set of eigenvalues $\{e_{k}\ | k=1,\cdots,N\}$ can be grouped into well separated clusters, namely
\begin{align}
\{e_{k}|\ k=1,\cdots,N\}=\cup_{l} A_{l}
\end{align} with properties that, for some constant $c_0=\cO(1)$, $$|e_n-e_m|\geq c_0\ \text{if} \ e_n\in A_l, \ e_m\in A_k\ \text{with} \ l\not=k.$$ Then Fermi Golden rule assumption can be relaxed: for each fixed $l$, the each of the functions
$
 \displaystyle\sum_{
e_m,\ e_n\in A_l
}\Psi_{m,n}(\sigma)\ z_m z_n
$ is not identically zero, with $\displaystyle\sum_{e_m\in A_l}|z_m|^{2}\not=0.$

\end{remark}

\section{Main Theorem}\label{MainTHM}
In this section we state precisely the main theorem of this paper. The key fact is that despite of the possible presence of resonance, the main results remain the same as in \cite{GaWe,GaWe2011}.

For
technical reasons we impose the following conditions on the external potential $V$ of \eqref{eq:NLS}:
\begin{enumerate}
 \item[(VA)] $V$ decays exponentially fast at $\infty.$
\end{enumerate}

Recall the notations $\xi=(\xi_1,\cdots,\xi_N)$ and $\eta=(\eta_1,\dots,\eta_N)$ for components of the neutrally stable modes of frequencies $\pm iE_n(\lambda),\ n=1,\cdots,N,$ of the linearized operator $L(\lambda)$ defined in \eqref{eq:defLLambda}.

\begin{theorem}\label{THM:MainTheorem}
Suppose that Conditions (NL) in Section \ref{HaGWP}, (SA) in Section \ref{sec:OperL}, (FGR) in Section \ref{sec:FGR} and (VA) above are
satisfied. Let  $\nu>0$ be fixed and sufficiently large.
\\ Then there exist constants
$c,\epsilon_{0}>0$ such that, if
\begin{equation}\label{InitCond}
\inf_{\gamma\in
\mathbb{R}}
\left\|\psi_0-e^{i\gamma}
\left(\phi^{\lambda_{0}}+\ (Re\ z^{(0)})\cdot \xi+i\ (Im\ z^{(0)})\cdot\eta
\right)\right\|_{\mathcal{H}^{3,\nu}}\ \leq\ 
c\ | z^{(0)}| \le \epsilon_0,
\end{equation} 
  then there exist smooth functions 
  \begin{align}
  &\lambda(t):\mathbb{R}^{+}\rightarrow
\mathcal{I},\ \ \ \gamma(t): \mathbb{R}^{+}\rightarrow
\mathbb{R},\  z(t):\mathbb{R}^{+}\rightarrow \mathbb{C}^d,\nn\\
 &\ \  R(x,t):\mathbb{R}^{d}\times\mathbb{R}^{+}\rightarrow
\mathbb{C}
\nn\end{align}
 such that the solution of NLS evolves in the form:
\begin{align}\label{Decom}
\psi(x,t)&\ =\ e^{i\int_{0}^{t}\lambda(s)ds}e^{i\gamma(t)}\nn\\
&\ \ \ \ \ \ \ \times[\phi^{\lambda}+a_{1}(z,\bar{z})
\D_\lambda\phi^{\lambda}+ia_{2}(z,\bar{z})\phi^{\lambda} + (Re\ \tilde{z})\cdot\xi + i(Im \tilde{z})\cdot\eta + R ],\end{align}
where  $\lim_{t\rightarrow \infty}\lambda(t)=\lambda_{\infty},$
for some $\lambda_{\infty}\in \mathcal{I}$.\\ 
Here, $a_{1}(z,\bar{z}),\ a_{2}(z,\bar{z}): \mathbb{C}^N\times\mathbb{C}^N\rightarrow \mathbb{R}$ and $\tilde{z}-z: \mathbb{C}^N\times\mathbb{C}^N\rightarrow \mathbb{C}^N$
 are polynomials of $z$ and $\bar{z}$, beginning with terms of order $|z|^{2}$. Their explicit definitions will be given in \eqref{eq:pkmn}.\\
Moreover:
\begin{enumerate}
\item[(A)] $|z(t)|\leq c(1+t)^{  -\frac{1}{2}  }$ and, there exists a polynomial $F(z,\bar{z})=O(|z|^4)\in \mathbb{R}$ such that
$z$ satisfies the initial value problem 
\begin{equation}\label{eq:detailedDescription1}
{\partial_t}[ |z|^2+F(z,\bar{z})] =-C\Gamma( z ,\bar{ z })  +\ \cO((1+t)^{-\frac{12}{5}})
\end{equation} where $C>0$ is a constant, $\Gamma( z ,\bar{ z })=\cO(|z|^4)$ is a positive quantity defined in \eqref{Gammadef}. 
 \item[(B)] $\vec{R}(t)=( Re R(t), Im R(t))^T$ lies in the essential spectral part of $L(\lambda(t))$. Equivalently, $R(\cdot,t)$ satisfies the symplectic orthogonality conditions:
 \begin{align}\label{s-Rorthogonal}
&\omega\langle R,i\phi^{\lambda}\rangle\ =\ \omega\langle
R,\partial_{\lambda}\phi^{\lambda}\rangle\ =\ 0\nn\\
&\omega\langle
R,i\eta_{n}\rangle=\omega\langle R,\xi_{n}\rangle=0,\
n=1,2,\cdot\cdot\cdot,N,
\end{align} 
 where $\omega(X,Y)=Im\ \int X\bar{Y}$
 and satisfies the decay estimate:
 \begin{equation}
\|(1+x^{2})^{-\nu}\vec{R}(t)\|_{2}\leq c(1+|t|)^{-1}.
\label{Rdecay}
\end{equation}
\end{enumerate}
\end{theorem}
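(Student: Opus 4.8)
\textbf{Proof proposal for Theorem \ref{THM:MainTheorem}.}

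The plan is to follow the architecture of \cite{GaWe, GaWe2011}, carrying out the decomposition, the normal form reduction, and the dispersive estimates, but replacing the non-resonant normal form transformation by the resonance-aware one of Section \ref{NormalForms} so that the Fermi Golden rule quantity $\Gamma(z,\bar z)$ from \eqref{Gammadef} controls the decay even when some of the small denominators vanish. First I would set up the ansatz: insert the decomposition \eqref{Decom} into \eqref{eq:NLS}, impose the symplectic orthogonality conditions \eqref{s-Rorthogonal}, and derive the coupled system of ODEs for $\lambda(t)$, $\gamma(t)$, $z(t)$ together with the PDE for $R$ driven by the neutral modes. Local well-posedness of this system in $\mathcal{H}^{3,\nu}$ follows from the basic theory in Section \ref{HaGWP}, so the entire content is the a priori estimates on a maximal interval $[0,T]$ that propagate the bootstrap assumptions $|z(t)|\le c(1+t)^{-1/2}$ and $\|(1+x^2)^{-\nu}\vec R(t)\|_2\le c(1+t)^{-1}$.

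Next I would perform the two successive near-identity (normal form) transformations of Section \ref{NormalForms} to remove the non-oscillatory and the oscillatory-but-non-resonant terms from the $z$-equation, absorbing the removed pieces into the polynomial corrections $a_1, a_2$ and $\tilde z - z$ defined in \eqref{eq:pkmn}. The key structural point — and this is exactly where the hypothesis (FGR) and Proposition \ref{prop:smallDivisor} enter — is that the terms whose denominators $e_0 - e_m - e_n$ or combinations thereof are small (or zero, in the resonant case) are precisely those that couple to the continuous spectrum through $e(x,k)$ at $|k|_{m,n} = (e_0 - e_m - e_n)^{1/2}$; by Proposition \ref{prop:smallDivisor} the numerators carry a compensating factor, so after the transformation the leading secular contribution to $\partial_t(|z|^2 + F(z,\bar z))$ is exactly $-C\,\Gamma(z,\bar z) + (\text{errors})$, with $\Gamma \gtrsim |z|^4$ by \eqref{Gammadef}. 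Simultaneously I would run the dispersive estimates for $\vec R$: using the $L^p$-$L^{p'}$ and weighted local decay estimates for $e^{L(\lambda)t}P_c^\lambda$ (justified by (SA) and the absence of threshold resonances from Lemma after (SA)), together with Duhamel's formula, the source terms split into a part quadratic in $z$ (which, being resonant, is handled by the same cancellation so that its contribution to $R$ is $\mathcal{O}((1+t)^{-1})$ after one more normal form step on $R$), and genuinely higher-order terms. Feeding the bootstrap bound on $z$ gives the claimed $(1+t)^{-1}$ decay of the weighted norm of $\vec R$.

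To close the loop I would combine the two estimates: the differential inequality $\partial_t(|z|^2 + F) = -C\Gamma(z,\bar z) + \cO((1+t)^{-12/5})$ of item (A), with $\Gamma(z,\bar z) \ge C'|z|^4 \ge C''(|z|^2+F)^2$ for $|z|$ small, integrates to $|z(t)|^2 \lesssim (1+t)^{-1}$ (the error exponent $-12/5 < -2$ being subcritical, hence harmless), which is strictly better than the bootstrap hypothesis; and the $R$-estimate was already closed with a gain. Standard continuity then extends $T$ to $+\infty$. The convergence $\lambda(t) \to \lambda_\infty$ follows because $\partial_t\lambda$ is quadratically small in $(z,R)$ hence in $L^1_t([0,\infty))$ by the decay rates just established; the same for $\gamma(t)$ modulo the explicitly subtracted phase. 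The main obstacle — and the genuinely new part relative to \cite{GaWe, GaWe2011} — is the second step: making the normal form transformation of Section \ref{NormalForms} well-defined and bounded in the resonant regime, i.e. proving that every denominator that degenerates is accompanied by a numerator that degenerates at least as fast (Proposition \ref{prop:smallDivisor}), and then checking that the residual secular term is genuinely $-C\Gamma(z,\bar z)$ with the correct sign and not merely $\cO(|z|^4)$ of indefinite sign; all the dispersive machinery is, by contrast, essentially as in the earlier papers.
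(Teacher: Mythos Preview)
Your proposal is correct and follows essentially the same architecture as the paper: decomposition with orthogonality (Section~\ref{SEC:effective}), normal form transformation (Section~\ref{NormalForms}), extraction of the $\tilde R$ component (Section~\ref{sec:tildeR}), derivation of the damped equation \eqref{eq:detailedDescription} via Proposition~\ref{prop:smallDivisor} (Section~\ref{sec:NFT}), and closure by a bootstrap on the family of majorants $Z(T),\mathcal{R}_1(T),\dots,\mathcal{R}_5(T)$ (Section~\ref{ProveMain}).

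One small imprecision worth flagging: you attribute the well-definedness of the normal form in the resonant regime to Proposition~\ref{prop:smallDivisor}, but in the paper that role is played by Lemma~\ref{coordinate-lemma}, which shows that the numerator $\langle N^{Im}_{m,n},\phi^\lambda\rangle$ vanishes to the same order as the denominator $(m-n)\cdot E(\lambda)$ when $|m|=|n|=1$ (this is what makes $A^{(1)}_{m,n}$ in \eqref{defA11} bounded). Proposition~\ref{prop:smallDivisor} operates one level higher: it controls the \emph{fourth}-order terms in $\partial_t|z|^2$, separating out the genuine damping $-C(e_0-\lambda)\Gamma(z,\bar z)$ from residual terms proportional to $E(\lambda)\cdot(m-n)$ with $|m|=|n|=2$, which are then absorbed into $\partial_t F(z,\bar z)$. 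Also, the small denominators at issue are differences $E_k(\lambda)-E_l(\lambda)\approx e_l-e_k$, not $e_0-e_m-e_n$ (the latter is bounded away from zero by (NL) and is what makes $|k|_{m,n}$ real). These are bookkeeping points and do not affect the soundness of your outline.
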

The main theorem will be reformulated into Theorem \ref{GOLD:maintheorem} below.

\section{The Effective
Equations for $\dot{z},\ \dot\lambda,\ \dot\gamma$ and $
R$}\label{SEC:effective}
In this section we derive equations for $\dot{z},\ \dot\lambda,\ \dot\gamma$ and $R$.

We decompose the solution as 
\begin{align}
\psi(x,t)&\ =\ e^{i\int_{0}^{t}\lambda(s)ds}e^{i\gamma(t)}\nn\\
&\ \ \ \ \ \times\left[\phi^{\lambda}+a_{1}
\phi_{\lambda}^{\lambda}+ia_{2}\phi^{\lambda}+\sum_{n=1}^{N}(\alpha_{n}+p_{n})\xi_{n}+
i\sum_{n=1}^{N}(\beta_{n}+q_{n})\eta_{n}+R\right]\nn\\
&\ =\ e^{i\int_{0}^{t}\lambda(s)ds}e^{i\gamma(t)}\left[\phi^{\lambda}+a_{1}
\phi_{\lambda}^{\lambda}+ia_{2}\phi^{\lambda}+(\alpha+p)\cdot\xi+
i(\beta+q)\cdot\eta+R\right]
\label{Decom1}
\end{align} 
Here and going forward, we'll use the notations:
\begin{align}
\alpha=(\alpha_1,\dots,\alpha_N)^T,\ \ \beta=(\beta_1,\dots,\beta_N)^T,\nn\\
\xi=(\xi_1,\dots,\xi_N)^T,\ \ \eta=(\eta_1,\dots,\eta_N)^T.\nn
\end{align}
Introducing 
\begin{equation} z=\alpha+i\beta,\nn\end{equation}
we have 
\begin{equation}
\alpha=\frac{1}{2}(z+\overline{z}),\ \ \beta=\frac{1}{2i}(z-\overline{z}),
\nn\end{equation}
and we seek $a_j=a_j(z,\overline{z})=\cO(|z|)^2,\ j=1,2$ and $p_j=p_j(z,\overline{z})=\cO(|z|^2)$, polynomials in $z$ and $\overline{z}$, which are of degree larger than or equal to two, and are real.
Substitution of the Ansatz \eqref{Decom1} into NLS,
equation \eqref{eq:NLS}, we have the following system of equations
for  $\vec{R}$, defined as
\begin{align}
\vec{R}:= \left(
\begin{array}{lll}
R_{1}\\
R_{2}
\end{array}
\right),\ R_{1}:= Re R,\ \ R_{2}:= Im R:
\nn
\end{align}
as
\begin{align}
\partial_{t}\vec{R}=&L(\lambda)\vec{R}+\dot\gamma
J\vec{R}-J\vec{N}(\Vec{R},z)
-\left(
\begin{array}{lll}
\partial_{\lambda}\phi^{\lambda}[\dot\lambda+\partial_{t}a_{1}]\\
\phi^{\lambda} [ \dot{\gamma}+\partial_{t} a_{2}-a_{1} ]
\end{array}
\right)\nn\\
& +\ \left(
\begin{array}{lll}
\xi\cdot[E(\lambda)(\beta+q)-\partial_{t}(\alpha+p)]\\
-\eta\cdot[E(\lambda)(\alpha+p)+\partial_{t}(\beta+q)]
\end{array}
\right)\label{Eq:R} \\
&+\dot\gamma\ \left(
\begin{array}{lll}
(\beta+q)\cdot\eta\\
- (\alpha+p)\cdot\xi
\end{array}\right)
-\dot\lambda \left(
\begin{array}{lll}
a_{1}\partial_{\lambda}^{2}\phi^{\lambda}+
(\alpha+p)\cdot\partial_{\lambda}\xi\\
 a_{2}\partial_{\lambda}\phi^{\lambda}+
(\beta+q)\cdot\partial_{\lambda}\eta
\end{array}
\right),\nonumber
\end{align}

Here, 
\begin{align}
J\vec{N}(\Vec{R},z):=\left(
\begin{array}{lll}
ImN(\vec{R},z)\\
-ReN(\vec{R},z)
\end{array}
\right) \label{JvecN}
\end{align}
with
\begin{align}
ImN(\Vec{R},z)&:=
|\phi^{\lambda}+I_{1}+iI_{2}|^{2} I_{2}-(\phi^{\lambda})^{2} I_{2}\nn\\
Re N(\Vec{R},z)&:=
[|\phi^{\lambda}+I_{1}+iI_{2}|^{2}-(\phi^{\lambda})^{2}](\phi^{\lambda}+I_{1})
-2(\phi^{\lambda})^{2}I_{1}\nn\\
I_{1}\ &=\ A_{1}+A_{2}+R_{1},\ \ \ \ 
I_{2}\ =\ B_{1}\ +\ B_{2}\ +\ R_{2}\nn\\
A_{1}\ &=\ \alpha\cdot\xi, \ \ \ \ \ A_{2}\ =\ a_{1}\partial_{\lambda}\phi^{\lambda}+
p\cdot\xi,\nn\\
 B_{1}\ &=\ \beta\cdot \eta,\ \ \ \ \ 
B_{2}\ =\ a_{2}\phi^{\lambda}\ +\ q\cdot\eta.\label{eq:A12B12}
\end{align}

From the system of equations \eqref{Eq:R} and the orthogonality conditions
\eqref{eq:orthogonality} and \eqref{s-Rorthogonal} we obtain equations for $
\dot\lambda,\ \dot\gamma$ and $z_{n}=\alpha_n+i\beta_n,\ n=1,\dots,N: $
\begin{align}
\partial_{t}(\alpha_{n}+p_{n})-E_n(\lambda)(\beta_{n}+q_{n})+\langle ImN(\Vec{R},z),\eta_{n}\rangle=&F_{1n};\label{eq:z1} \\
\partial_{t}(\beta_{n}+q_{n})+E_n(\lambda)(\alpha_{n}+p_{n})-\langle ReN(\Vec{R},z),\xi_{n}\rangle=&F_{2n};\label{eq:z2}\\
\dot\gamma+\partial_{t}a_{2}-a_{1}-\frac{1}{\langle
\phi^{\lambda},\phi^{\lambda}_{\lambda}\rangle}\langle
ReN(\Vec{R},z),\phi^{\lambda}_{\lambda}\rangle=& F_{3};\label{eq:gamma}\\
\dot\lambda+\partial_{t}a_{1}+\frac{1}{\langle
\phi^{\lambda},\phi^{\lambda}_{\lambda}\rangle}\langle
ImN(\Vec{R},z),\phi^{\lambda}\rangle=&F_{4}.\label{eq:lambda}
\end{align}
Finally, the scalar functions $F_{j,n},\ j=1,2, F_3,
 F_4,$ are defined as
\begin{align}
F_{1n}:= &\dot\gamma\langle
(\beta+q)\cdot\eta,\eta_{n}\rangle-\dot{\lambda}a_{1}\langle
\partial_{\lambda}^{2}\phi^{\lambda},\eta_{n}\rangle
-\dot\lambda\langle
(\alpha+p)\cdot\partial_{\lambda}\xi,\eta_{n}\rangle\nn\\
&-\dot\gamma\langle
R_{2},\eta_{n}\rangle
+\dot\lambda\langle
R_{1},\partial_{\lambda}\eta_{n}\rangle,\nn\\
F_{2n} := &-\dot\gamma\langle
(\alpha+p)\cdot\xi,\xi_{n}\rangle
-\dot{\lambda}a_{2}\langle\phi^{\lambda}_{\lambda},
\xi_{n}\rangle
-\dot\lambda\langle(\beta+q)\cdot\partial_{\lambda}\eta,\xi_{n}\rangle\nn\\
&+\dot\gamma\langle
R_{1},\xi_{n}\rangle+\dot\lambda\langle
R_{2},\partial_{\lambda}\xi_{n}\rangle,\nn\\
F_{3} := &\frac{1}{\langle
\phi^{\lambda},\phi^{\lambda}_{\lambda}\rangle}\times\nn\\
&
\left[\ \dot\lambda \langle
R_{2},\phi_{\lambda\lambda}^{\lambda}\rangle -\dot\gamma\langle
R_{1},\phi_{\lambda}^{\lambda}\rangle
-\langle\dot\gamma(\alpha+p)\cdot\xi+
\dot{\lambda}a_{2}\phi^{\lambda}_{\lambda}
+\dot\lambda (\beta+q)\cdot\partial_{\lambda}\eta,\phi^{\lambda}_{\lambda}\rangle\ \right],\nn\\
F_{4}:=&\frac{1}{\langle
\phi^{\lambda},\phi^{\lambda}_{\lambda}\rangle}\times\nn\\
&
\left[\ \dot\lambda\langle
R_{1},\phi_{\lambda}^{\lambda}\rangle +\dot\gamma\langle
R_{2},\phi^{\lambda}\rangle
+\langle
\dot\gamma (\beta+q)\cdot \eta-\dot{\lambda}a_{1}
\partial_{\lambda}^{2}\phi^{\lambda}-\dot\lambda
(\alpha+p)\cdot\partial_{\lambda}\xi,\phi^{\lambda}\ \rangle\right].
\end{align}

To facilitate later discussions we cast \eqref{eq:z1} and \eqref{eq:z2} into a convenient form. Since $\alpha_n$ and $\beta_n$ are real parameters, it is equivalent to study the complex parameters $z_n:=\alpha_n+i\beta_n$. Compute \eqref{eq:z1}$+i$\eqref{eq:z2} to find
\begin{align}
\partial_t z_n+iE_{n}(\lambda)z_n=&-\partial_{t} (p_n+iq_n)-iE_{n}(\lambda)(p_n+iq_n)-\langle ImN(\Vec{R},z),\eta_{n}\rangle\nonumber\\
&+i\langle ReN(\Vec{R},z),\xi_{n}\rangle
+F_{1n}+iF_{2n}.\label{eq:z}
\end{align}
Note that \eqref{eq:z1} and \eqref{eq:z2} can be recovered from the equation above by considering the real and imaginary parts of \eqref{eq:z}.

\begin{remark}
\begin{itemize}
\item[(a)] Recall the estimate of $Remainder$ in \eqref{remainder}. By \eqref{eq:z1}-\eqref{eq:lambda} we have
\begin{equation}\label{eq:EstRough}
\dot\lambda,\ \dot\gamma,\
\partial_{t}z_{n}+iE_n(\lambda)z_{n}=O(|z|^{2})+Remainder.
\end{equation}
\item[(b)] The functions $a_j(z,\overline{z}), j=1,2,\ p_n(z,\overline{z}),\ 
 q_n(z,\overline{z}), n=1,\dots,N$ will be chosen to eliminate ``non-resonant'' terms: $z^a\overline{z}^b, \ \ 2\le a+b\le3$.
 \end{itemize}
\end{remark}
Finally, we derive an equation for 
\begin{equation}
\vec{R}=P_c^{\lambda(t)}\vec{R}=P_c\vec{R},
\nn\end{equation}
the continuous spectral part of the solution, relative to the operator, $L(\lambda(t))$. Applying $P_c=P_c^{\lambda(t)}$ to \eqref{Eq:R} to use that (see \eqref{eq:PdProjection})
\begin{align*}
P_c\left(
\begin{array}{lll}
\xi_n\\
\pm i \eta_n
\end{array}
\right)=P_c\left(
\begin{array}{lll}
0\\
\phi^{\lambda}
\end{array}
\right)=P_c\left(
\begin{array}{lll}
\partial_{\lambda}\phi^{\lambda}\\
0
\end{array}
\right)=0
\end{align*} to remove many terms on the right hand side,
and using the commutator identity:
\begin{align}
P_{c}\partial_t\vec{R}=\partial_t\vec{R}-\dot\lambda
\D_\lambda P_{c}\vec{R}
\label{Pccomdt}
\end{align}
we obtain
\begin{align}\label{RAfProj}
\partial_t\vec{R}\ =\ 
 L( \lambda(t) )\vec{R}\ -\ P_{c}^{\lambda(t)}J\vec{N}(\Vec{R},z)\ +\ L_{(\dot\lambda,\dot\gamma)}\vec{R}\ +\ \mathcal{G}.
\end{align}
Here the operator $L_{(\dot\lambda,\dot\gamma)}$ and the 
 vector function $\mathcal{G}$ are defined 
as 
\begin{align}
& L_{(\dot\lambda,\dot\gamma)}\ =\ \dot\lambda
\D_\lambda P_{c}^{\lambda(t)}+\dot\gamma P_{c}^{\lambda(t)}J,\label{Lgdld}\\
& \mathcal{G}\ =\ P_{c}^{\lambda(t)}\left(
\begin{array}{lll}
\dot\gamma (\beta+q)\cdot\eta-\dot{\lambda}a_{1}
\partial_{\lambda}^{2}\phi^{\lambda}
-\dot\lambda (\alpha+p)\cdot\partial_{\lambda}\xi\\
-\dot\gamma (\alpha+p)\cdot\xi-\dot{\lambda}a_{2}\phi^{\lambda}_{\lambda}-\dot\lambda
(\beta+q)\cdot\partial_{\lambda}\eta
\end{array}
\right).\label{Gdef}
\end{align}
\section{Reformulation of The Main Theorem}\label{sec:refor}
 The proof of Theorem ~\ref{THM:MainTheorem} we use the following result, which gives a more detailed characterization of the terms in the decomposition.
\begin{theorem}\label{GOLD:maintheorem}
The function $R$ in \eqref{Decom} of Theorem ~\ref{THM:MainTheorem}
can be decomposed as
\begin{equation}\label{eq:expanR}
\vec{R}=\displaystyle\sum_{ |m|+|n|=2}R_{m, n}(\lambda)z^{m}\bar{z}^n+\tilde{R}
\end{equation} where $R_{m,n}$ are functions of the form
$$R_{m,n}=\big[L(\lambda)+iE(\lambda)\cdot (m-n)-0\big]^{-1}\phi_{m,n}$$
$\phi_{m,n}$ are smooth, spatially exponentially decaying functions. The function
$\tilde{R}$ satisfies the equation
\begin{equation}\label{eq:tildeR}
\begin{array}{lll}
\partial_t\tilde{R}&=&L(\lambda)\tilde{R}+M_{2}(z,\bar{z})\tilde{R}+P_{c}N_{2}(\vec{R},z)+P_{c}S_{2}(z,\bar{z}),
\end{array}
\end{equation} where
\begin{enumerate}
\item[(1)] $S_{2}(z,\bar{z})=\cO(|z|^{3})$ is a polynomial in $z$ and
 $\bar{z}$ with $\lambda$-dependent coefficients, and each coefficient can be written as the sum of
 functions of the form
 \begin{align}\label{eq:unusual}
[L(\lambda)\pm i(E_m(\lambda)+E_n(\lambda))-0]^{-k}P_{c}\phi_{\pm k}(\lambda),
\end{align}
where $k=0,1,2$ and the functions $\phi_{\pm k}(\lambda)$ are smooth
and decay exponentially fast at spatial $\infty$;
 \item[(2)] $M_{2}(z,\bar{z})$ is an operator defined by
 \begin{equation}\label{eq:M2}
 M_{2}(z,\bar{z}):=\dot{\gamma}P_{c}J+\dot{\lambda}P_{c\lambda}+X,
 \end{equation}
 where $X$ is a $2\times 2$ matrix, satisfying the bound
   $$|X|\leq c|z|e^{-\epsilon_{0}|x|}.$$
 \item[(3)] $N_{2}(\vec{R},z)$ can be separated into
 localized term and nonlocal term
 \begin{equation}\label{eq:N2Decomposition}
 N_{2}=Loc+NonLoc
 \end{equation} where $Loc$ consists of terms spatially
 localized (exponentially) function of $x\in \mathbb{R}^{3}$ as a factor and satisfies the estimate
 \begin{equation}
 \|\langle x\rangle^{\nu}(-\Delta+1)Loc\|_{2}+\|Loc\|_{1}+
 \|Loc\|_{\frac{4}{3}}\leq c(|z|^{3}(t)+|z|(t)\|\langle x\rangle^{-\nu}\vec{R}\|_{2}),
 \label{eq:Loc-est}\end{equation}
  and $NonLoc$ is given by
\begin{equation}\label{eq:NonlocDef}
NonLoc:=(R_{1}^{2}+R_{2}^{2})J\vec{R}.
\end{equation}
Here $\nu$ is the same as in Theorem
~\ref{THM:MainTheorem}.
\end{enumerate}

In the rest of the paper we denote by $Remainder(t)$ any quantity which satisfies the estimate:
\begin{align}\label{remainder}
|Remainder(t)|\lesssim |z(t)|^{4}+\|\langle
x\rangle^{-\nu}\vec{R}(t)\|_{2}^{2}+\|\vec{R}(t)\|_{\infty}^{2}
+|z(t)|\  \|\langle
x\rangle^{-\nu}\tilde{R}(t)\|_{2}.
\end{align} 

The functions $\lambda$, $\gamma,$ $z$ have the following properties
\begin{enumerate}
\item[(A)]
\begin{equation}\label{ExpanLambda}
\dot\lambda=Remainder(t);
\end{equation}
\item[(B)]
\begin{equation}\label{ExpanGamma}
\dot\gamma=\Upsilon+Remainder(t)
\end{equation} with
\begin{equation}\label{eq:Gamma11}
\Upsilon:=\frac{\langle
(\phi^{\lambda})^{2}[\frac{3}{2}|z\cdot\xi|^{2}+\frac{1}{2}|z\cdot\eta|^{2}],\partial_{\lambda}\phi^{\lambda}\rangle}{\langle\phi^{\lambda},\partial_{\lambda}\phi^{\lambda}\rangle};
\end{equation} 
\item[(C)]
there exists a polynomial $F(z,\bar{z})=\cO(|z|^4)\in \mathbb{R}$ such that the vector $z$ satisfies the equation
\begin{equation}\label{eq:detailedDescription}
{\partial_t}[|z|^2+F(z,\bar{z})]=-\Gamma(z,\bar{z})+|z|Remainder(t)
\end{equation} where $\Gamma(z,\bar{z})$ is a positive quantity defined in \eqref{Gammadef}.\end{enumerate}

\end{theorem}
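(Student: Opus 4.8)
The plan is to establish the decomposition \eqref{eq:expanR} and the equations \eqref{eq:tildeR}, \eqref{ExpanLambda}, \eqref{ExpanGamma}, \eqref{eq:detailedDescription} by a two-step normal form procedure applied to the system \eqref{Eq:R}, \eqref{eq:z}, \eqref{eq:gamma}, \eqref{eq:lambda}, \eqref{RAfProj} derived in Section \ref{SEC:effective}. First I would extract from $\vec R$ its leading quadratic-in-$z$ part: since by \eqref{eq:EstRough} one has $\partial_t z_n + iE_n(\lambda)z_n = \cO(|z|^2) + Remainder$, the forcing $-P_c J\vec N(\vec R,z)$ in \eqref{RAfProj} has a quadratic part of the form $\sum_{|m|+|n|=2}\phi_{m,n}(\lambda)z^m\bar z^n$ with $\phi_{m,n}$ exponentially localized (the nonlinearity evaluated on the soliton-plus-bound-state part, dropping $R$, $a_j$, $p_n$, $q_n$). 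Solving the corresponding linear constant-coefficient (in the fast oscillation) equation on the continuous spectral subspace produces $R_{m,n}(\lambda) = [L(\lambda) + iE(\lambda)\cdot(m-n) - 0]^{-1}\phi_{m,n}$; the limiting resolvent $-0$ is well-defined since $2E_n(\lambda) > \lambda$ places the frequencies in the interior of $\sigma_{ess}(L(\lambda))$, so these resolvents exist as bounded operators between weighted spaces by the limiting absorption principle. Substituting $\vec R = \sum R_{m,n}z^m\bar z^n + \tilde R$ and using the $z$-equation \eqref{eq:z} to compute $\partial_t(R_{m,n}z^m\bar z^n)$ yields the equation \eqref{eq:tildeR} for $\tilde R$, with the cubic source $S_2$, the operator $M_2$, and the nonlinearity $N_2$ appearing as the residual terms; checking that $S_2$ has exactly the structure \eqref{eq:unusual}, that $M_2$ has the form \eqref{eq:M2} with $|X| \le c|z|e^{-\epsilon_0|x|}$, and that $N_2 = Loc + NonLoc$ with the estimates \eqref{eq:Loc-est}, is a bookkeeping computation that follows the template of \cite{GaWe, GaWe2011}.

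Next I would determine the polynomials $a_1, a_2, p_n, q_n$ by the standard normal-form prescription: choose them to cancel all ``non-resonant'' monomials $z^a\bar z^b$ with $2\le a+b\le 3$ on the right sides of \eqref{eq:gamma}, \eqref{eq:lambda}, \eqref{eq:z}. For $\dot\lambda$ this eliminates every quadratic and cubic term outright (since the relevant monomials carry nonzero frequency $E(\lambda)\cdot(m-n)$, hence are non-resonant against the trivial frequency of $\lambda$), giving \eqref{ExpanLambda}. For $\dot\gamma$ the resonant monomials are exactly those with $m = n$, i.e. the $|z\cdot\xi|^2$ and $|z\cdot\eta|^2$ combinations, which survive and assemble into $\Upsilon$ as in \eqref{eq:Gamma11}; everything else is absorbed into $Remainder$. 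The explicit solvability of the homological equations for $a_j, p_n, q_n$ (their being well-defined polynomials of the stated degrees) uses only that the linear frequencies $E_n(\lambda)$ are nonzero and bounded away from $0$ — crucially, it does \emph{not} require the non-resonance condition \eqref{eqn:nonRe1}, because here we only eliminate terms of order $|z|^2$ and $|z|^3$ and these are automatically non-resonant with respect to the scalar $\lambda, \gamma$ dynamics.

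The heart of the matter — and the main obstacle — is part (C), the Fermi-Golden-rule dissipation equation \eqref{eq:detailedDescription}, which must hold \emph{uniformly} in the resonance parameters. Forming $\partial_t|z|^2 = 2\,Re\,\langle \dot z, z\rangle$ from \eqref{eq:z} and substituting $\vec R = \sum R_{m,n}z^m\bar z^n + \tilde R$ into the nonlinear terms $\langle ImN,\eta_n\rangle$, $\langle ReN,\xi_n\rangle$, one gets at leading order a quartic form in $z$ whose coefficient matrix involves $\langle R_{m,n}, \text{(localized)}\rangle = \langle [L(\lambda)+iE\cdot(m-n)-0]^{-1}\phi_{m,n}, \psi\rangle$; taking the imaginary part and using the Plemelj/Sokhotski formula extracts a delta-function on the sphere $|k| = |k|_{m,n}$ and produces precisely $\Gamma(z,\bar z) = \|\sum_{m,n}\Psi_{m,n}(\sigma)z_m z_n\|_{\mathcal L^2(\mathbb S^2)}^2 \ge C|z|^4$ by the (FGR) condition \eqref{Gammadef}. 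The polynomial correction $F(z,\bar z) = \cO(|z|^4)$ is introduced to absorb the oscillatory (non-sign-definite) quartic terms whose time-derivative is $\cO(|z|^5) \subset |z|\,Remainder$. The delicate point is the \emph{uniformity}: when two frequencies nearly coincide, some denominator $iE\cdot(m-n)$ appearing in an intermediate normal-form coefficient becomes small, but — and this is the paper's key observation, Proposition \ref{prop:smallDivisor} — the corresponding numerator $\phi_{m,n}$ then exhibits a cancellation making it proportionally small, so the ratio stays bounded; I would invoke that proposition at exactly the places where such small divisors would otherwise appear, both in the definition of $R_{m,n}$ (when $m-n$ gives a frequency near a double point, the limiting resolvent is still controlled because $\phi_{m,n}$ degenerates compatibly) and in the assembly of $\Gamma$. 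Verifying that these cancellations propagate correctly through the quartic form, so that the error is genuinely $|z|\,Remainder(t)$ with constant independent of the spectral gaps, is the step I expect to require the most care.
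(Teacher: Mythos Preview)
Your overall architecture matches the paper's (Sections \ref{sec:tildeR}--\ref{sec:NFT}), but you misplace the small-divisor difficulty, and this is a genuine gap.

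The resolvents $R_{m,n}=[L(\lambda)+iE(\lambda)\cdot(m-n)-0]^{-1}P_c\phi_{m,n}$ are \emph{never} a problem: for $|m|=2,|n|=0$ the frequency $E\cdot m$ lands in $\sigma_{ess}$ and the limiting absorption principle applies; for $|m|=|n|=1$ the frequency $E_a-E_b$ lies in the spectral gap $(-\lambda,\lambda)$ of $L(\lambda)$ and the ordinary resolvent is bounded. No degeneration of $\phi_{m,n}$ is needed or occurs here.

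The real small divisor appears exactly where you assert it does not: in solving the homological equation for $a_1$. For $|m|=|n|=1$ the coefficient $A_{m,n}^{(1)}$ is defined by dividing $\langle N_{m,n}^{Im},\phi^\lambda\rangle$ by $i(m-n)\cdot E(\lambda)$, and $(m-n)\cdot E(\lambda)=E_a(\lambda)-E_b(\lambda)$ can be zero or arbitrarily small. Your claim that these monomials ``carry nonzero frequency'' and are ``automatically non-resonant'' is false and is precisely the obstacle the paper is written to overcome. The resolution (Lemma \ref{coordinate-lemma}) is the identity
\[
\langle N_{m,n}^{Im},\phi^\lambda\rangle=\tfrac{1}{4i}(n-m)\cdot E(\lambda)\,[\langle m\cdot\xi,n\cdot\xi\rangle+\langle m\cdot\eta,n\cdot\eta\rangle],
\]
obtained from $L_+-L_-=2(\phi^\lambda)^2$ and the eigenvalue relations; the numerator carries the same factor as the denominator, so $A_{m,n}^{(1)}$ is uniformly bounded. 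Without this you cannot define $a_1$, hence cannot obtain \eqref{ExpanLambda}. (Relatedly, for $\dot\gamma$ the surviving terms are all $|m|=|n|=1$ monomials, not only $m=n$; the paper defines $a_2$ only for $|m|\neq|n|$, and $\Upsilon$ collects the full $|m|=|n|=1$ sum.)

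For part (C) you are closer, but the mechanism in Proposition \ref{prop:smallDivisor} is not that ``$\phi_{m,n}$ degenerates''. Rather, after computing $Re\sum_k\bar z_k\Theta_j(k)$ one finds that the non-sign-definite quartic pieces come with an \emph{explicit} factor $E(\lambda)\cdot(m-n)$ in front of $z^m\bar z^n$ (see \eqref{eq:ReThet1}, \eqref{eq:ReThet2}); since $E(\lambda)\cdot(m-n)\,z^m\bar z^n=i\partial_t(z^m\bar z^n)+|z|\,Remainder$, these are absorbed into $\partial_tF$. That is the cancellation, and it is structural (coming from the algebra of $L_\pm$ and the reality of the relevant sums), not a limiting-absorption phenomenon.
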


The definition of $R_{m,n}$ in \eqref{eq:expanR} will be in Section \ref{SEC:effective}, the proof of \eqref{eq:tildeR} will be in Section \ref{sec:tildeR}, \eqref{ExpanLambda} and \eqref{ExpanGamma} will be reformulated into Proposition \ref{Prop:ExplicitPolyno}, \eqref{eq:detailedDescription} will be proved in Section \ref{sec:NFT}.

\section{Proof of \eqref{eq:tildeR}}\label{sec:tildeR}
Observe that in the equation for $\Vec{R}$ in \eqref{RAfProj}, the term on the right hand side, specifically $J\Vec{N}(\Vec{R},z)$, contains terms quadratic in $z$ and $\bar{z}$.
Hence for fixed $z(t)\in\C^N$, the equation for $\vec{R}(t)$ is forced by terms
of order $\cO(|z(t)|^2)$.  

In what follows, we extract the quadratic in $z,\overline{z}$ part of $\vec{R}(t)$.
Observe that the quadratic terms generated by the nonlinearity are of the form:
\begin{align}
\sum_{|m|+|n|=2}\ JN_{m,n}z^{m}\bar{z}^n\ =&\left(
\begin{array}{ccc}
2\phi^{\lambda}A_{1}B_{1}\\
-3\phi^{\lambda}A_{1}^{2}-\phi^{\lambda}B_{1}^{2}
\end{array}\right).\label{eq:SecondOrderTerm}
\end{align} 
where $A_{1}=\alpha\cdot\xi,\ B_{1}=\beta\cdot\eta$, and recall the definition of $JN$ from \eqref{JvecN}.
Substitute this into the equation for $\Vec{R}$ in 
\eqref{RAfProj} and decompose $\Vec{R}$ in the next results:
\begin{theorem}
Define 
\begin{equation}\label{eq:Rform}
R_{m,n}:=[L(\lambda)+iE(\lambda)\cdot (m-n)-0]^{-1}P_{c}JN_{m,n},
\end{equation}
and decompose $\vec{R}(t)$ as
\begin{equation}
\Vec{R}\ =\ \sum_{|m|+|n|=2}\ R_{m,n}z^{m}\bar{z}^{n}\ +\ \tilde{R}
\label{RtR}
\end{equation}
Then the vector-function $\tilde{R}(x,t)$
 satisfies \eqref{eq:tildeR}.
\end{theorem}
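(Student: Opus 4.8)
I would substitute the decomposition \eqref{RtR} into the evolution equation \eqref{RAfProj} for $\vec R$ and read off the equation governing $\tilde R$; the whole content is a careful sorting of the resulting terms. By the Leibniz rule,
\[
\partial_t\vec R=\partial_t\tilde R+\sum_{|m|+|n|=2}\big[(\dot\lambda\,\partial_\lambda R_{m,n})\,z^m\bar z^n+R_{m,n}\,\partial_t(z^m\bar z^n)\big],
\]
and $L(\lambda)\vec R=L(\lambda)\tilde R+\sum_{|m|+|n|=2}\big(L(\lambda)R_{m,n}\big)z^m\bar z^n$. Two identities then drive the computation: from the definition \eqref{eq:Rform}, $L(\lambda)R_{m,n}=-iE(\lambda)\!\cdot\!(m-n)\,R_{m,n}+P_cJN_{m,n}$; and from \eqref{eq:z} together with \eqref{eq:EstRough}, $\partial_t(z^m\bar z^n)=-iE(\lambda)\!\cdot\!(m-n)\,z^m\bar z^n+r_{m,n}$, where $r_{m,n}$ splits into a polynomial in $(z,\bar z)$ of order $\ge3$, a term quadratic in $z$ and linear in $\vec R$, and a term bounded by $|z|\cdot Remainder(t)$.

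\textbf{The cancellation.} The decisive point is that the forcing of exact quadratic order in $(z,\bar z)$ cancels. On one hand, $\sum_{|m|+|n|=2}\big[(L(\lambda)R_{m,n})z^m\bar z^n-R_{m,n}\partial_t(z^m\bar z^n)\big]$ reduces to $\sum_{|m|+|n|=2}\big[P_cJN_{m,n}z^m\bar z^n-R_{m,n}r_{m,n}\big]$, the two $-iE(\lambda)\!\cdot\!(m-n)R_{m,n}z^m\bar z^n$ contributions cancelling each other. On the other hand, by \eqref{eq:SecondOrderTerm} the quadratic part of $J\vec N(\vec R,z)$ is exactly $\sum_{|m|+|n|=2}JN_{m,n}z^m\bar z^n$ --- the quadratic part of $\vec N$ being independent of $\vec R$, since every monomial of $\vec N$ carrying a factor of $\vec R$ also carries a factor linear in $z$. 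Hence the quadratic-in-$z$ forcing disappears and $\partial_t\tilde R=L(\lambda)\tilde R$ plus a remainder that is either at least cubic in $z$ or carries a factor of $\vec R$ or of $\dot\lambda,\dot\gamma$ --- explicitly, $-P_c$ applied to the order-$\ge3$ part of $J\vec N(\vec R,z)$, minus $\sum_{|m|+|n|=2}R_{m,n}r_{m,n}$, minus $\sum_{|m|+|n|=2}(\dot\lambda\,\partial_\lambda R_{m,n})z^m\bar z^n$, plus $L_{(\dot\lambda,\dot\gamma)}\vec R$, plus $\mathcal G$. This is precisely the purpose of defining $R_{m,n}$ through the resolvent $[L(\lambda)+iE(\lambda)\!\cdot\!(m-n)-0]^{-1}$ acting on the smooth, exponentially decaying function $P_cJN_{m,n}$: the limiting-absorption boundary value exists on the relevant weighted spaces because the resonant shifts $\pm i(E_m(\lambda)+E_n(\lambda))$ lie strictly inside $\sigma_{ess}(L(\lambda))$ (using $2E_n(\lambda)>\lambda$ from Lemma \ref{LM:NearLinear} and assumption (SA)), while the remaining shifts $0$ and $\pm i(E_m(\lambda)-E_n(\lambda))$ fall in the spectral gap, where $R_{m,n}$ is an ordinary, exponentially decaying resolvent output.

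\textbf{Sorting the remainder.} The five groups just listed are then redistributed into $M_2(z,\bar z)\tilde R+P_cN_2(\vec R,z)+P_cS_2(z,\bar z)$. The terms linear in $\tilde R$ --- the pieces $\dot\lambda\,\partial_\lambda P_c\tilde R$ and $\dot\gamma P_cJ\tilde R$ of $L_{(\dot\lambda,\dot\gamma)}\vec R$, together with the cross terms $\propto\phi^\lambda(A_1+B_1)\tilde R$ pulled out of $-P_cJ\vec N$, which obey $|X|\le c|z|e^{-\epsilon_0|x|}$ since $\phi^\lambda,\xi_n,\eta_n$ decay exponentially --- form $M_2\tilde R$ as in \eqref{eq:M2}. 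The terms at least quadratic in $\vec R$ --- the cubic-nonlinearity contribution $(R_1^2+R_2^2)J\vec R$, the unique one with no spatially localized factor, together with the remaining $\vec R^2$- and $\vec R^3$-terms, each carrying a factor $\phi^\lambda$ --- form $P_cN_2(\vec R,z)=P_c(Loc+NonLoc)$ with $NonLoc=(R_1^2+R_2^2)J\vec R$, and \eqref{eq:Loc-est} is then checked monomial by monomial, pairing each power of $|z|$ with at most one factor $\|\langle x\rangle^{-\nu}\vec R\|_2$ and reading the $L^1$, $L^{4/3}$ norms off the localized factors. Everything purely polynomial in $(z,\bar z)$ of order $\ge3$ --- the cubic-in-$z$ part of $-P_cJ\vec N$ with $\vec R$ set to zero ($k=0$ terms $P_c\phi_0$, built from products of $\phi^\lambda,\xi_n,\eta_n$); the products $R_{m,n}\cdot(\text{polynomial part of }r_{m,n})$ and the $\dot\gamma$-, $\dot\lambda$-weighted contributions of $R_{m,n}$ coming through $L_{(\dot\lambda,\dot\gamma)}$ and $\mathcal G$ ($k=1$, since $R_{m,n}$ carries one resolvent and $\dot\gamma=\cO(|z|^2)$, $\dot\lambda=Remainder$ keep the order $\ge3$); and $\dot\lambda\,\partial_\lambda R_{m,n}z^m\bar z^n$, whose resolvent content is a squared resolvent ($k=2$, from $\partial_\lambda$ of the boundary-value resolvent) --- forms $P_cS_2(z,\bar z)$, $S_2=\cO(|z|^3)$, each coefficient of the form \eqref{eq:unusual}. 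Only the crude bounds of \eqref{eq:EstRough} are used for the ordering; the sharper statements \eqref{ExpanLambda}--\eqref{eq:detailedDescription} are proved later and not needed here.

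\textbf{Main obstacle.} The genuinely delicate step is the last group: showing that the coefficients of $S_2$ have \emph{exactly} the structure \eqref{eq:unusual} --- no resolvent power exceeding $2$, and no resolvent shift other than $0,\pm i(E_m(\lambda)-E_n(\lambda)),\pm i(E_m(\lambda)+E_n(\lambda))$, ever appearing at cubic order. This rests on two structural observations. First, each $R_{m,n}$ carries precisely one resolvent, and any product $R_{m,n}R_{m',n'}$ is quartic in $z$ (since $|m|+|n|,|m'|+|n'|\ge2$), hence never enters at cubic order --- so the resolvent count can be raised only by $\partial_\lambda(\cdot)$, which produces at most a squared resolvent, and by $P_c^{\lambda(t)}(\cdot)$, a smooth exponentially localized operator. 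Second, by \eqref{eq:z} the subleading part $r_{m,n}$ of $\partial_t(z^m\bar z^n)$ is itself $\cO(|z|^2)$ with coefficients expressible through the same resolvent-type and exponentially decaying functions, so that multiplication by $R_{m,n}$ raises the count by at most one. A secondary subtlety, orthogonal to this power-counting, is that the coefficients carrying a resonant resolvent decay only polynomially, so one must ensure the demanding norm in \eqref{eq:Loc-est} is imposed only on the genuinely localized pieces while these polynomially-decaying terms are kept within $S_2$ or enter through weaker norms; making the whole accounting airtight is where the real work lies.
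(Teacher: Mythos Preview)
Your approach is correct and is exactly the substitution-and-sort argument the paper defers to \cite{GaWe}: insert \eqref{RtR} into \eqref{RAfProj}, use $L(\lambda)R_{m,n}=-iE(\lambda)\!\cdot\!(m-n)\,R_{m,n}+P_cJN_{m,n}$ together with \eqref{eq:EstRough} to cancel the quadratic forcing, then distribute the remainder among $M_2\tilde R$, $P_cN_2$, and $P_cS_2$. The paper omits its own proof, so your sketch is already more detailed than what appears here; you also correctly flag the one genuine bookkeeping hazard --- the boundary-value pieces $R_{m,0},R_{0,n}$ decay only polynomially and must be routed through $S_2$-type propagator estimates rather than the weighted norm in \eqref{eq:Loc-est}.
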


The proof is the same to that in \cite{GaWe}, and skipping it will not affect understanding the main part of this paper. Hence we choose to omit this part.

To facilitate later discussions, we further decompose $$J\vec{N}_{>2}=J\vec{N}(\vec{R},z)-\sum_{|m|+|n|=2}\ JN_{m,n}z^{m}\bar{z}^n.$$ We extract the third order terms of $J\vec{N}_{>2}:$
\begin{equation}\label{eq:JNm+n=3}
\sum_{|m|+|n|=3}JN_{m,n}z^{m}\bar{z}^{n}=\sum_{|m|+|n|=2}X R_{m,n}z^m\bar{z}^{n}+X\left(
\begin{array}{lll}
A_{2}\\
B_{2}
\end{array}
\right)+\left(
\begin{array}{lll}
(A_{1}^{2}+B_{1}^{2})B_{1}\\
-(A_{1}^{2}+B_{1}^{2})A_{1}
\end{array}
\right)
\end{equation} where
$X$ is a $2\times 2$ matrix defined as
\begin{equation}\label{eq:XDef}
X:=\left(
\begin{array}{lll}
2\phi^{\lambda}B_{1}& 2\phi^{\lambda}A_{1}\\
-6\phi^{\lambda}A_{1}&
-2\phi^{\lambda}B_{1}
\end{array}
\right),
\end{equation} and $A_1,\ B_1$ and $A_2,\ B_2$ are defined in \eqref{eq:A12B12}.

\section{Normal Form Transformon, Proofs of \eqref{ExpanLambda} and \eqref{ExpanGamma}}\label{NormalForms}
In this section we present the proofs of equations \eqref{ExpanLambda} and \eqref{ExpanGamma}, governing $\dot\lambda$ and $\dot\gamma$, crucial to controlling the large time behavior. The main result is Proposition \ref{Prop:ExplicitPolyno}.

This part is different from \cite{GaWe, GaWe2011}, in that we have to define a new normal form transformation, some of whose parameters are defined as solutions to systems of linear equations and their existence has to be justified.
Moreover some small denominators will appear and we have to prove the numerators are also small.

Now we present the idea. Central to our claim about the large time dynamics of NLS, is that the solution settles into
an asymptotic solitary wave, $\phi^{\lambda_\infty}$, where $\lambda(t)\to\lambda_\infty$. We achieve this by showing $|\dot\lambda(t)|\lesssim \epsilon_0 (1+t)^{-1-\delta}$ for some $\delta>0$ and small $\epsilon_0>0.$
Since we expect the neutral mode amplitudes, $z(t)$, to decay with a rate $t^{-\frac{1}{2}}$, we require that there be no $\cO(|z(t)|^2)$ on the right hand side of the equation \eqref{eq:lambda}:
  $$\dot\lambda(t) = -\D_t a_1(z,\overline{z})-\frac{1}{\langle
\phi^{\lambda},\phi^{\lambda}_{\lambda}\rangle}\langle
ImN(\Vec{R},z),\phi^{\lambda}\rangle+ \dots.$$ The strategy is
  to choose the quadratic part of the polynomial $a_1(z,\overline{z})$
  so as to eliminate all quadratic terms. There are two types of such terms, (1) the terms $z_k z_l$ and $\bar{z}_k \bar{z}_l$,  and they are oscillatory with frequencies $\sim -E_k-E_l$ or $\sim E_k+E_l$, which stay away from zero. And the margins are large enough so that we can easily remove them by a normal form transformation, utilizing that $z_k z_l\approx \partial_{t} \frac{1}{-i (E_k+E_l)} z_k z_l.$ (2)
The terms $z_k\overline{z_m}$ have frequencies $\sim -E_k+E_m$, which might be of small, or zero, frequencies. The key observation is that is that if the frequency of $z_{k}\bar{z}_l$ is small, then the coefficient is proportionally small! This allows us to define normal form transformation. The calculation is carried out below; see
Lemma~\ref{coordinate-lemma}, especially \eqref{defA11}.
  
Similarly we choose $p_n,\ q_n,\ n=1,2,\cdots,N,$ and $a_2$ to remove most of the lower order terms in the equations for $\dot\gamma,\ \dot\alpha_n$ and $\dot\beta_n$. It turns out some terms can not be removed, for example the term $|z_n|^{2}$ in the equation for $\dot\gamma$, and $|z_n|^2 z_n$ terms in the equation for $\partial_{t}z$. On the other hand, there terms either play a favorable role in our analysis, or will not affect it, namely it does not matter if $\gamma$ is not convergent, since $e^{i\gamma(t)}$ is only a phase factor. 

In defining $p_n,\ q_n$ we have to solve system of linear equations, see e.g. \eqref{eq:pq20and02}. The existence and uniqueness of solutions have to be addressed.

In what follows we use the notations $N^{Im}_{m,n},\ N^{Re}_{m,n}$
to stand for functions satisfying 
$$\left(
\begin{array}{lll}
N^{Im}_{m,n}\\   
-N^{Re}_{m,n}
\end{array}
\right)=JN_{m,n},$$ where, recall the definition of $JN_{m,n},\ |m|+|n|=2,3,$ from \eqref{eq:SecondOrderTerm} and \eqref{eq:JNm+n=3}.

In what follows define the polynomials $a_{1}$, $a_{2}$, $p_{k}$ and $q_{k},\
k=1,2,\cdot\cdot\cdot,N$ in \eqref{Decom} by defining their coefficients:
\begin{align}
a_{1}(z,\bar{z}):=&\displaystyle\sum_{|m|+|n|=2,3}A^{(1)}_{m,n}(\lambda)z^{m}\bar{z}^{n},
\nonumber\\
a_{2}(z,\bar{z}):=&\displaystyle\sum_{|m|+|n|=2,3,\ |m|\not=|n|}A^{(2)}_{m,n}(\lambda)z^{m}\bar{z}^{n},\nonumber\\
p_{k}(z,\bar{z}):=&\displaystyle\sum_{|m|+|n|=
2,3}P^{(k)}_{m,n}(\lambda)z^{m}\bar{z}^{n}, \label{eq:pkmn}\\
q_{k}(z,\bar{z}):=&\displaystyle\sum_{|m|+|n|=
2,3}Q^{(k)}_{m,n}(\lambda)z^{m}\bar{z}^{n},\nonumber
\end{align}
with $m,n\in (\mathbb{Z}^{+}\cup \{0\})^{N}$.
In what follows we use the notation $$m\cdot E(\lambda)=\sum_{k}m_kE_k(\lambda).$$

We start with defining $A_{m,n}^{(1)}$.
For $|m|=2,3$
\begin{align}
A_{m,0}^{(1)}:=&\frac{1}{im\cdot E(\lambda)}\frac{1}{\langle
\phi^{\lambda},\partial_{\lambda}\phi^{\lambda}\rangle}\langle
N^{Im}_{m,0},\phi^{\lambda}\rangle.
\end{align} For $|m|=2,\ |n|=1,$
\begin{align}
A_{m,n}^{(1)}:=&\frac{1}{\langle
\phi^{\lambda},\partial_{\lambda}\phi^{\lambda}\rangle}\frac{1}{i (m-n)\cdot E(\lambda)}\times \nn\\
&\big[\langle
N^{Im}_{m,n},\phi^{\lambda}\rangle-\frac{i}{2}\sum_{{\small{
\begin{array}{lll}
|k|=|r|=|n|=1,\\ 
k+r=m
\end{array}
}}}\Upsilon_{k,n}\langle
r\cdot\eta,\phi^{\lambda}\rangle\big], \label{eq:A1}
\end{align}
here $\Upsilon_{m,n}$ is from the expansion of $\Upsilon=\Upsilon(z,\bar{z})$, defined in \eqref{eq:Gamma11}:
\begin{align*}
\Upsilon=\sum_{|m|=|n|=1} \Upsilon_{m,n}z^{m}\bar{z}^{n}
\end{align*} and the vector $r $ is in $(\mathbb{Z}^{+}\cup \{0\})^{N}.$

For $|m|= |n|=1$, we define
\begin{align}
A_{m,n}^{(1)}:=&\frac{1}{\langle
\phi^{\lambda},\partial_{\lambda}\phi^{\lambda}\rangle}\frac{1}{i (m-n)\cdot E(\lambda)}\langle
N^{Im}_{m,n},\phi^{\lambda}\rangle\label{defA11}\\
=&\frac{1}{4} \frac{1}{\langle
\phi^{\lambda},\partial_{\lambda}\phi^{\lambda}\rangle}[\langle n\cdot \eta, \ m\cdot\eta\rangle+\langle n\cdot\xi,\ m\cdot \xi\rangle].
\end{align}
Here in the first line it is possible that the denominator $(m-n)\cdot E(\lambda)$ equals to zero or arbitrarily small, for example $m=n$, which might make the term ill-defined. In the second line we indicate that, by using Lemma \ref{coordinate-lemma} below, if the denominator is small, then the numerator is proportionally small. Hence $A_{m,n}^{(1)}$ are always well defined, a similar calculation was in \cite{GaWe2011}.

After defining various terms above, the other terms in $A^{(1)}_{m,n}$ are determined by the relations
$$A_{k,l}^{(1)}:=\overline{A_{l,k}^{(1)}}\ \text{for}\ |k|+|l|=2,3.$$

Now we define various terms in $A^{(2)}_{m,n},\ |m|+|n|=2,3, \ |m|\not=|n|$. 

For $|m|=2,3,\ |n|=0$, we define $A_{m,0}^{(2)}$ as
\begin{align}
-i m\cdot E(\lambda)A_{m,0}^{(2)}+A_{m,0}^{(1)}:=&\frac{1}{\langle
\phi^{\lambda},\partial_{\lambda}\phi^{\lambda}\rangle}\langle
N^{Re}_{m,0},\partial_{\lambda}\phi^{\lambda}\rangle.
\end{align}
For $|m|=2,\ |n|=1$,
\begin{align}
&-i(m-n)\cdot E(\lambda)A_{m,n}^{(2)}+A_{m,n}^{(1)}\nn\\
:=&\frac{1}{\langle
\phi^{\lambda},\partial_{\lambda}\phi^{\lambda}\rangle}[\langle
N^{Re}_{m,n},\partial_{\lambda}\phi^{\lambda}\rangle
-\frac{1}{2}\sum_{|k|=|n|=|r|=1, k+r=m}\Upsilon_{k,n}\langle
r\cdot\xi,\partial_{\lambda}\phi^{\lambda}\rangle],
\end{align} here the definition of $\Upsilon_{m,n}$ and the convention on $r$ are as in \eqref{eq:A1}. Especially in solving for $A_{m,n}^{(2)}$ we need $\frac{1}{(m-n)\cdot E(\lambda)}$ to be uniformly bounded, indeed this is true by the facts $E_{k}(\lambda)\approx e_0-e_{k}$ in \eqref{eq:Enen} and $2e_k<e_0$, see Condition (NL) in Section \ref{HaGWP}.

The other terms in $A_{k,l}^{(2)}$ are determined by the relations
$$A_{k,l}^{(2)}:=\overline{A_{l,k}^{(2)}}\ \text{for}\ |k|+|l|=2,3.$$

Next we define coefficients $P^{(k)}_{m,n}$ and $Q^{(k)}_{m,n}$ for the polynomials $p_k$ and $q_k,\ k=1,2,\cdots, N,$ see \eqref{eq:pkmn}.

For $|m|=2,3,\ |n|=0,$ we define $P_{m,0}^{(k)}$ and $Q_{m,0}^{(k)}$ to be solutions to the linear equations
\begin{align}
-im\cdot E(\lambda)P_{m,0}^{(k)}-E_k(\lambda)Q_{m,0}^{(k)}:=&-\langle
N_{m,0}^{Im},\eta_{k}\rangle,\nonumber\\
-i m\cdot E(\lambda)Q_{m,0}^{(k)}+E_k(\lambda)P_{m,0}^{(k)}:=&\langle
N_{m,0}^{Re},\xi_{k}\rangle.\label{eq:pq20and02}
\end{align} Here the solutions exist and are unique because the corresponding $2\times 2$ matrices 
\begin{align}
D:=\left[
\begin{array}{cc}
-im\cdot E(\lambda) & -E_k(\lambda)\\
E_k(\lambda) & -im \cdot E(\lambda)
\end{array}
\right]\label{Dmatrix}
\end{align}
are uniformly invertible, or equivalently they determinants stay away from zero by a uniform margin. To see this, compute directly to obtain
\begin{align}
DetD=-[m\cdot E(\lambda)]^2+E_{k}^2(\lambda)=-[m\cdot E(\lambda)+E_k(\lambda)]\ [m\cdot E(\lambda)-E_k(\lambda)].
\end{align} Next we relate the quantities on the right hand side to $e_0$ and $e_l,\ l=1,2,\cdots,N,$ by \eqref{eq:Enen}, namely  $$\text{for any}\ l=1,2,\cdots, N,\ E_{l}(\lambda)\approx e_0-e_l.$$ Compute directly and use $2e_{l}< e_0$ to see 
\begin{align}
m\cdot E(\lambda)-E_k(\lambda)&\approx \sum_{l=1}^{N} m_l(e_0-e_l)-(e_0-e_k)\nonumber\\
&=(|m|-1)e_0+e_k-\sum_{l=1}^{N} m_l e_l \label{eq:awayZero}
\end{align} is positive and stays way from zero for any $m\in \{\mathbb{Z}^{+}\cup \{0\}\}^{N}$ and $|m|=2,3$.
This together with that $m\cdot E(\lambda)+E_k(\lambda)$ is more positive implies the desired result: $Det D$ stays away from zero.

For $|m|=1$ and $|n|=2,$ we define $P_{m,n}^{(k)}$ and $Q_{m,n}^{(k)}$ to satisfy the equation
\begin{align}
&iP_{m,n}^{(k)}-Q_{m,n}^{(k)}:=\nonumber\\
&\frac{-\langle
N_{m,n}^{Im},\eta_{k}\rangle+i\langle N_{m,n}^{Re},\xi_{k}\rangle+i\displaystyle\sum_{|m|=|k|=|r|=1, k+r=n}\Upsilon_{m,k}[\langle r\cdot \eta,\eta_{k}\rangle-\langle r\cdot\xi,\xi_{k}\rangle]}{E_k-E(\lambda)\cdot (m-n)},\label{eq:PQkmn}
\end{align} here the denominator $E_k-E(\lambda)\cdot (m-n)$ stays away from zero by a uniform margin, by the same justification as in \eqref{eq:awayZero}, and the definition of $\Upsilon_{m,n}$ and the convention on $r$ is as in \eqref{eq:A1}, $r\cdot \eta:=\sum_{k}r_k\eta_k$ and $r\cdot \xi:=\sum_{k}r_k\xi_k$. Note that at this moment \eqref{eq:PQkmn} does not give unique solutions. This will be become clear in a moment.

For $|m|=2$ and $|n|=1,$ we define
\begin{align}
i P_{m,n}^{(k)}-Q_{m,n}^{(k)}=0.
\end{align}

After defining $iP_{m,n}^{(k)}-Q_{m,n}^{(k)}$ for $(|m|,\ |n|)=(1,2), \ (2,1)$ above, it is not hard to see that these together with the relations $P_{m,n}^{(k)}=\overline{P_{n,m}^{(k)}}$ and $Q_{m,n}^{(k)}=\overline{Q_{n,m}^{(k)}}$ determine unique solutions for the linear equations.

We continue to define $P^{(k)}_{m,n},\ Q^{(k)}_{m,n}$ for $|m|=|n|=1,$
\begin{align}
-i(m-n)\cdot E(\lambda)\ P^{(k)}_{m,n}-E_k(\lambda)Q_{m,n}^{(k)}:=&-\langle N_{m,n}^{Im},\eta_{k}\rangle,\nonumber\\
-i(m-n)\cdot E(\lambda)\ Q^{(k)}_{m,n}+E_k(\lambda)P_{m,n}^{(k)}:=&\langle N_{m,n}^{Re},\xi_{k}\rangle.
\end{align} The solutions are well defined and unique since the matrix 
\begin{align}\left(
\begin{array}{cc}
-i(m-n)\cdot E(\lambda)& -E_k(\lambda)  \\
 E_k(\lambda) & -i(m-n) \cdot E(\lambda) 
\end{array}\right)
\end{align} is uniformly invertible by the same arguments in showing the invertibility of the matrix in \eqref{Dmatrix}.

We complete defining all the relevant terms by requiring that
$$P_{m,n}^{(k)}:=\overline{P_{n,m}^{(k)}},\ Q_{m,n}^{(k)}:=\overline{Q_{n,m}^{(k)}}.$$

By now we have finished defining the polynomials $a_1,\ a_2,\ p_k,\ q_k,\ k=1,2,\cdots,N.$

Next we study the equation for $\dot{z}_k.$ By the definitions of coefficients $P_{m,n}^{(k)}, \ Q_{m,n}^{(k)}$
we removed the following terms from the $\dot{z}_k$-equation: $z^{m}\bar{z}^{n}$ if $|m|+|n|=2,3$ and $(|m|, \ |n|)\not=(2,1).$ The result is:
\begin{proposition}\label{Prop:ExplicitPolyno}
Define the polynomials $a_{1}(z,\bar{z}),\ a_{2}(z,\bar{z}),\ p_{n}(z,\bar{z}),\ q_{n}(z,\bar{z})$ as
above. Then, 
 \eqref{ExpanLambda}-\eqref{ExpanGamma} holds and
moreover for $k=1,2,\cdots, N$
\begin{align}\label{eq:ZNequation}
\partial_{t}z_{k}+iE_k(\lambda)z_{k}=&-\left< \sum_{|m|=2,\ |n|=1}JN_{m,n} z^{m}\bar{z}^{n},
\left(
\begin{array}{lll}
\eta_{k}\\
-i\xi_{k}
\end{array}\right)\right>\\
&+
\frac{1}{2}\Upsilon\displaystyle\sum_{m=1}^{N}z_{m}\left<\left(
\begin{array}{lll}
-i\eta_{m}\\ \xi_{m} \end{array}\right), 
\left(
\begin{array}{lll}
\eta_{k}\\
i\xi_{k}
\end{array}
\right)\right>+Remainder(t).\nn
\end{align} 
\end{proposition}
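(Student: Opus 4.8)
The plan is to substitute the normal form ansatz \eqref{Decom1} into the $\dot z_k$-equation \eqref{eq:z} and track, order by order in $|z|$, which monomials $z^m\bar z^n$ with $|m|+|n|=2,3$ get cancelled by the choices of $P^{(k)}_{m,n}$, $Q^{(k)}_{m,n}$, $A^{(1)}_{m,n}$, $A^{(2)}_{m,n}$ made above, and to verify that what remains is exactly the right-hand side of \eqref{eq:ZNequation} up to $Remainder(t)$. First I would recall from \eqref{eq:z} that
\[
\partial_t z_k + iE_k(\lambda)z_k = -\partial_t(p_k+iq_k) - iE_k(\lambda)(p_k+iq_k) - \langle ImN,\eta_k\rangle + i\langle ReN,\xi_k\rangle + F_{1k}+iF_{2k}.
\]
The terms $F_{1k}+iF_{2k}$ are manifestly $Remainder(t)$: each summand carries a factor $\dot\lambda$ or $\dot\gamma-\Upsilon$ (which are $Remainder$ by \eqref{ExpanLambda}, \eqref{ExpanGamma}), or is a product of $\dot\gamma$ with a quadratic expression in $z$ — but here one must be slightly careful, since $\dot\gamma=\Upsilon+Remainder$ and $\Upsilon=\cO(|z|^2)$, so $\dot\gamma\times\cO(|z|^2)=\cO(|z|^4)+\ldots$, which is indeed absorbed; the $\Upsilon$-proportional pieces of $F_{1k},F_{2k}$ that are cubic do survive and must be matched against the $\Upsilon$-term in \eqref{eq:ZNequation} — this is exactly why the $\Upsilon$-correction appears in the definitions \eqref{eq:A1}, \eqref{eq:PQkmn}.

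Next I would expand $-\langle ImN,\eta_k\rangle+i\langle ReN,\xi_k\rangle$ using the decomposition of $J\vec N$ into its homogeneous pieces: the quadratic part $\sum_{|m|+|n|=2}JN_{m,n}z^m\bar z^n$ from \eqref{eq:SecondOrderTerm}, the cubic part $\sum_{|m|+|n|=3}JN_{m,n}z^m\bar z^n$ from \eqref{eq:JNm+n=3} (which itself contains the $XR_{m,n}$, $X(A_2,B_2)^T$ and $(A_1^2+B_1^2)(B_1,-A_1)^T$ contributions), and a remainder of order $\cO(|z|^4)+\cO(|z|\|\langle x\rangle^{-\nu}\vec R\|_2)+\ldots$ which is $Remainder(t)$; note the contribution of $\tilde R$ and of $R-\sum R_{m,n}z^m\bar z^n$ also lands in $Remainder(t)$ by \eqref{remainder}. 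Pairing each $JN_{m,n}$ against $(\eta_k,-i\xi_k)^T$ gives precisely $-\langle N^{Im}_{m,n},\eta_k\rangle+i\langle N^{Re}_{m,n},\xi_k\rangle$. Meanwhile $-\partial_t(p_k+iq_k)-iE_k(p_k+iq_k)$, using $\partial_t z^m\bar z^n = -i(m-n)\cdot E(\lambda)z^m\bar z^n + \cO(|z|)Remainder$ (the correction coming from $\dot z_j + iE_j z_j$ being $Remainder$), produces $\sum_{|m|+|n|=2,3}[-i(m-n)\cdot E(\lambda) - iE_k]\,(P^{(k)}_{m,n}+iQ^{(k)}_{m,n})z^m\bar z^n$ plus $Remainder$. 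Then it is a bookkeeping matter: for each $(|m|,|n|)$ with $|m|+|n|\in\{2,3\}$ and $(|m|,|n|)\neq(2,1)$, the defining linear equations for $P^{(k)}_{m,n},Q^{(k)}_{m,n}$ (resp.\ the $\Upsilon$-corrected ones for $(|m|,|n|)=(1,2)$ and $|m|=|n|=1$) were rigged exactly so that the bracketed coefficient times $(P+iQ)$ cancels $-\langle N^{Im}_{m,n},\eta_k\rangle+i\langle N^{Re}_{m,n},\xi_k\rangle$ plus the corresponding $\Upsilon$-piece from $F_{1k}+iF_{2k}$; for $(|m|,|n|)=(2,1)$ one sets $iP-Q=0$ so nothing is removed, leaving the first sum in \eqref{eq:ZNequation}; and the leftover $\Upsilon$-term that is not cancelled reorganizes into $\tfrac12\Upsilon\sum_m z_m\langle(-i\eta_m,\xi_m)^T,(\eta_k,i\xi_k)^T\rangle$.

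The main obstacle I expect is the consistency of the overlapping cancellations near $|m|=|n|=1$ and the interface between the cubic terms generated by $X R_{m,n}$, $X(A_2,B_2)^T$ and by substituting $\dot\gamma = \Upsilon + Remainder$ into $F_{1k},F_{2k}$: one has to confirm that the $\Upsilon$-proportional cubic contributions appearing from these three distinct sources combine with signs and combinatorial factors that match the $\Upsilon$-corrections hard-coded into \eqref{eq:A1} and \eqref{eq:PQkmn}, and that the residual $\tfrac12\Upsilon z_m$ term has the stated pairing. A secondary subtlety is justifying that $\partial_t(p_k+iq_k)$ may be replaced by $-i(m-n)\cdot E(\lambda)(p_k+iq_k)$ up to $|z|\,Remainder(t)$ — this uses \eqref{eq:EstRough} together with $\partial_t\lambda = Remainder$ so that differentiating the $\lambda$-dependent coefficients $P^{(k)}_{m,n}(\lambda)$ also produces $Remainder$. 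Once these are in place, \eqref{ExpanLambda} and \eqref{ExpanGamma} follow from the parallel (and simpler, since already proved in \cite{GaWe,GaWe2011}) computation applied to \eqref{eq:lambda} and \eqref{eq:gamma} with the coefficients $A^{(1)}_{m,n}, A^{(2)}_{m,n}$ defined above — the only new point being the well-definedness of $A^{(1)}_{m,n}$ when $(m-n)\cdot E(\lambda)$ is small, which is exactly the content of Lemma~\ref{coordinate-lemma} and equation \eqref{defA11}.
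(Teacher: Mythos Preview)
Your bookkeeping of which monomials get cancelled by the choices of $P^{(k)}_{m,n},Q^{(k)}_{m,n},A^{(j)}_{m,n}$ is essentially right, but the logical structure of the argument is circular and misses the main mechanism of the paper's proof: a two-pass bootstrap between the $\dot\lambda,\dot\gamma$ estimates and the $\dot z_k$ estimates.

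Concretely, you assert that $F_{1k}+iF_{2k}$ is ``manifestly $Remainder(t)$'' by invoking \eqref{ExpanLambda}--\eqref{ExpanGamma}, and that $\partial_t z^m\bar z^n=-i(m-n)\cdot E(\lambda)z^m\bar z^n+\cO(|z|)\,Remainder$ because ``$\dot z_j+iE_jz_j$ is $Remainder$''. Neither is available: \eqref{ExpanLambda}--\eqref{ExpanGamma} are part of what is being proved, and $\dot z_j+iE_jz_j$ is never $Remainder$ (even after the proposition it is $\cO(|z|^3)+Remainder$). With only the preliminary estimate \eqref{eq:EstRough}, $\partial_t z_n+iE_nz_n=\cO(|z|^2)+Remainder$, differentiating the quadratic part of $p_k+iq_k$ produces an extra $\cO(|z|^3)$ term, not $\cO(|z|)\,Remainder$; and the parallel computation for $\dot\lambda,\dot\gamma-\Upsilon$ via the $a_j$-coefficients (the paper's $D_2$) likewise only yields $\cO(|z|^3)+Remainder$, not $Remainder$.

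The paper closes this by iterating: first use \eqref{eq:EstRough} to obtain the intermediate $\dot\lambda,\dot\gamma-\Upsilon=\cO(|z|^3)+Remainder$ (this suffices to make $\Gamma_1=\dot\lambda\cdot\cO(|z|^2)=Remainder$); feed that into the $\dot z_k$-equation to get the improved bound $\partial_t z_n+iE_nz_n=\cO(|z|^3)+Remainder$ of \eqref{eq:betterZn}; then re-insert \eqref{eq:betterZn} into both $\Gamma_2$ (the part of $-\partial_t(p_k+iq_k)-iE_k(p_k+iq_k)$ coming from $\partial_t(z^m\bar z^n)+i(m-n)\cdot E\,z^m\bar z^n$) and $D_2$, which at this second pass become genuine $Remainder$. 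Without this bootstrap your argument does not close, and saying that \eqref{ExpanLambda}--\eqref{ExpanGamma} follow from a ``parallel and simpler'' computation hides exactly the coupling that forces the iteration.
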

\begin{proof}

Recall the convention that $Remainder$ represents any quantity satisfying
\begin{align}
\lesssim |z(t)|^{4}+\|\langle
x\rangle^{-\nu}\vec{R}(t)\|_{2}^{2}+\|\vec{R}(t)\|_{\infty}^{2}
+|z(t)|\  \|\langle
x\rangle^{-\nu}\tilde{R}(t)\|_{2}.
\end{align} 

We start with casting the $\dot\lambda-$ and $\dot\gamma-$eqns in  
\eqref{eq:gamma}, \eqref{eq:lambda} into a matrix form
\begin{align}\label{eq:lambdagamma}
[Id+M(z,\vec{R},p,q)]\left(
\begin{array}{lll}
\dot\lambda\\
\dot\gamma-\Upsilon
\end{array}
\right)=\Omega+Remainder
\end{align} where, the vector $\Omega$ is defined as
\begin{align}
\Omega:=\left(
\begin{array}{lll}
-\frac{1}{\langle
\phi^{\lambda},\partial_{\lambda}\phi^{\lambda}\rangle}[\ \langle
ImN,\phi^{\lambda}\rangle+\frac{i}{2}\Upsilon\langle\ (z-\bar{z})\cdot\eta,\phi^{\lambda}\ \rangle\ ]-\partial_{t}a_{1}\\
\frac{1}{\langle\phi^{\lambda},\partial_{\lambda}\phi^{\lambda}\rangle}[\langle
ReN,\partial_{\lambda}\phi^{\lambda}\rangle-\frac{1}{2}\Upsilon\langle\
(z+\bar{z})\cdot\xi,\partial_{\lambda}\phi^{\lambda}\ \rangle]-\Upsilon-\partial_{t}a_{2}+a_{1}
\end{array}
\right)
\end{align}
the term $Remainder$ is produced by the term
$\frac{\Upsilon}{\langle\phi^{\lambda},\partial_{\lambda}\phi^{\lambda}\rangle}\left(
\begin{array}{lll}
-\langle R_{1},\partial_{\lambda}\phi^{\lambda}\rangle\\
\langle R_{2},\phi^{\lambda}\rangle
\end{array}
\right),$ $Id$ is the $2\times 2$ identity matrix,
$M(z,\vec{R},p,q)$ is a matrix depending on $z,\vec{R},p$ and $q$
and satisfying the estimate
\begin{equation}\label{eq:Mterm}
\|M(z,\vec{R},p,q)\|= \cO(|z|)+Remainder.
\end{equation}

The smallness of the matrix $M$ makes $\big[Id+M\big]^{-1}$ uniformly bounded,  hence by \eqref{eq:lambdagamma}
\begin{align}
|\dot\lambda|,\ |\dot\gamma-\Upsilon|\lesssim  |\Omega|+Remainder.\label{eq:LamGam}
\end{align}

Next we estimate $\Omega$, and start with casting it into a convenient form.

The purpose of defining $a_{1}$ and $a_{2}$ in \eqref{eq:pkmn} is to
remove the lower order terms, in $z$ and $\bar{z}$, from $\langle
ImN,\phi^{\lambda}\rangle-\frac{i}{2}\Upsilon\langle\ 
(z-\bar{z})\cdot\eta,\phi^{\lambda}\rangle$ and $\langle
ReN,\partial_{\lambda}\phi^{\lambda}\rangle+\frac{1}{2}\Upsilon\langle\ 
(z+\bar{z})\cdot\xi,\partial_{\lambda}\phi^{\lambda}\rangle$ to get 
\begin{align}
\Omega=D_{1}+D_{2}\label{eq:d1d2}
\end{align} with
$$D_{1}:=\frac{1}{\langle\phi^{\lambda},\partial_{\lambda}\phi^{\lambda}\rangle}\left(
\begin{array}{lll}
-\langle ImN-\displaystyle\sum_{|m|+|n|=2,3}N_{m,n}^{Im} z^{m}\bar{z}^{n},\phi^{\lambda}\rangle\\
\langle
ReN-\displaystyle\sum_{|m|+|n|=2,3}N_{m,n}^{Re} z^{m}\bar{z}^{n},\partial_{\lambda}\phi^{\lambda}\rangle
\end{array}
\right),$$ and
$$D_{2}:=-\displaystyle\sum_{|m|+|n|=2,3}\left(
\begin{array}{lll}
\partial_{t}(A_{m,n}^{(1)}z^m\bar{z}^{n})+iE(\lambda)\cdot (m-n)A_{m,n}^{(1)}z^m\bar{z}^{n}\\
\partial_{t}(A_{m,n}^{(2)}z^m\bar{z}^{n})+iE(\lambda)\cdot (m-n) A_{m,n}^{(2)}z^m\bar{z}^{n}
\end{array}
\right).$$ It is easy to see that
\begin{align}
D_{1}=Remainder.\label{eq:estD1}
\end{align} 

To control $D_2$ we use a preliminary estimate from the $z_n-$equation in \eqref{eq:z}
\begin{align}
\partial_{t}z_{n}+iE_n(\lambda)z_{n}=\cO(|z|^{2})+Remainder\label{eq:preli}
\end{align} to obtain 
\begin{align}
D_{2}=&-\dot\lambda\displaystyle\sum_{|m|+|n|=2,3}\left(
\begin{array}{lll}
\partial_{\lambda}A_{m,n}^{(1)}\ z^m\bar{z}^{n}\\
\partial_{\lambda}A_{m,n}^{(2)}\ z^m\bar{z}^{n}
\end{array}
\right)+\cO(|z|^{3})+Remainder\nn\\
=&\cO(|z|^{3})+Remainder\label{eq:estD2}
\end{align} here the term $\cO(|z|^3)$ is from the term $\cO(|z|^2)$ in \eqref{eq:preli}.

Collect the estimates above to obtain
\begin{align}
\dot\lambda,\ \dot\gamma-\Upsilon=&\cO(|z|^{3})+Remainder.\label{eq:LambdaGammaRough}
\end{align}
These estimates are still worse than the desired \eqref{ExpanLambda}-\eqref{ExpanGamma}. The reason is that their derivations depend on the non-optimal \eqref{eq:preli}. Next we improve it using \eqref{eq:LambdaGammaRough}.

Choose $p_{n}$ and $q_{n}$ as in
\eqref{eq:pq20and02} to remove the following lower order terms: $z^{m}\bar{z}^{n}$ satisfying $|m|+|n|=2,3$ and $(|m|,\ |n|)\not=(2,1),$ to obtain
\begin{align}\label{eq:ZnRough}
\partial_{t}z_{n}+iE_n(\lambda)z_{n}&=-\left\langle \sum_{|m|=2,\ |n|=1}JN_{m,n}z^{m}\bar{z}^{n}
 +\frac{1}{2}\Upsilon\left(
\begin{array}{lll}
iz\cdot\eta\\
z\cdot\xi
\end{array}
\right), \left(
\begin{array}{lll}
\eta_{n}\\
-i\xi_{n}
\end{array}
\right)\right\rangle\nn\\
&+ D_{3}(n)+Remainder
\end{align}
where $D_{3}(n)$ is defined as
\begin{align}
D_{3}(n):=&-\sum_{|k|+|l|=2,3}[\partial_{t}(P_{k,l}^{(n)}z^{k}\bar{z}^{l})+i(k-l)\cdot E(\lambda)P_{k,l}^{(n)}z^{k}\bar{z}^{l}]\nonumber\\
&-i\sum_{|k|+|l|=2,3}[\partial_{t}(Q_{k,l}^{(n)}z^{k}\bar{z}^{l})+i(k-l)\cdot E(\lambda)Q_{k,l}^{(n)}z^{k}\bar{z}^{l}]\nonumber\\
=&-\dot\lambda \ \sum_{|k|+|l|=2,3}[\partial_{\lambda}P_{k,l}^{(n)}-\partial_{\lambda}Q_{k,l}^{(n)}]z^{k}\bar{z}^{l}\nonumber\\
&-\sum_{|k|+|l|=2,3}[P_{k,l}^{(n)}-Q_{k,l}^{(n)}]\ [\partial_{t}(z^{k}\bar{z}^{l})+i(k-l)\cdot E(\lambda)z^{k}\bar{z}^{l}]\nonumber\\
=& \Gamma_1+\Gamma_2
\end{align} and $\Gamma_1$ and $\Gamma_2$ naturally defined.

For $\Gamma_1$ the estimate for $\dot\lambda$ in \eqref{eq:LambdaGammaRough} implies that
\begin{align}
\Gamma_1=Remainder.\label{eq:estGamma1}
\end{align} 

For $\Gamma_2$, the preliminary estimate in \eqref{eq:preli} implies
$$\Gamma_2=\cO(|z|^{3})+Remainder.$$ This, in turn, implies an estimate better than \eqref{eq:preli}
\begin{align}
\partial_t z_{n}+iE_n(\lambda)z_{n}=\cO(|z|^{3})+Remainder.\label{eq:betterZn}
\end{align} Sine the estimates derived for $\Gamma_2$ depends on non-optimal \eqref{eq:preli}, this optimal one enables us to find
\begin{align}
\Gamma_2=Remainder.
\end{align} This together with $\Gamma_1=Remainder$ in \eqref{eq:estGamma1} implies $$D_3(n)=\Gamma_1+\Gamma_2=Remainder.$$ Put this into the $\partial_{t}z_n$-eqn 
in \eqref{eq:ZnRough} to obtain the desired estimate \eqref{eq:ZNequation}.

\eqref{eq:betterZn} also helps us to improve the estimate \eqref{eq:estD2} for $D_2$
\begin{align}
D_2=Remainder.
\end{align} This together with \eqref{eq:estD1}, \eqref{eq:d1d2} implies for $\Omega$ in \eqref{eq:d1d2}
$$\Omega=D_1+D_2=Remainder.$$
Put this into \eqref{eq:LamGam} to obtain
the desired estimates \eqref{ExpanLambda}, \eqref{ExpanGamma} for $\dot\lambda$ and $\dot\gamma-\Upsilon.$

\end{proof}

The following result has been applied in \eqref{defA11} to show that the numerator is proportional to the denominator. Similar result can be found in \cite{GaWe}.
\begin{lemma}\label{coordinate-lemma}
For $|m|=|n|=1,$ and $m,\ n\in (\mathbb{Z}^{+}\cup\{0\})^{N}$
\begin{align}
\langle N_{m,n}^{Im},\ \phi^{\lambda}\rangle=\frac{1}{4i}(n-m)\cdot E(\lambda)\ [\langle m\cdot\xi,\ n\cdot \xi\rangle+\langle m\cdot \eta, \ n\cdot \eta\rangle].
\label{eq:UnexpectedFact}
\end{align}
\end{lemma}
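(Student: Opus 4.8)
\textbf{Proof proposal for Lemma \ref{coordinate-lemma}.}

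The plan is to compute both sides of \eqref{eq:UnexpectedFact} directly from the definitions and verify they agree, with the right-hand side produced by extracting the appropriate spectral/commutator structure of $L(\lambda)$ acting on $\phi^\lambda$. First I would recall the explicit form of the quadratic nonlinearity: from \eqref{eq:SecondOrderTerm} and \eqref{eq:A12B12}, writing $A_1 = \alpha\cdot\xi$, $B_1 = \beta\cdot\eta$, the $|m|+|n|=2$ part of $JN$ is $\big(2\phi^\lambda A_1 B_1,\ -3\phi^\lambda A_1^2 - \phi^\lambda B_1^2\big)^T$, so by the notational convention $(N^{Im}_{m,n},-N^{Re}_{m,n})^T = JN_{m,n}$ we have $N^{Im} = 2\phi^\lambda A_1 B_1$ at quadratic order. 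Expanding $A_1 B_1 = (\alpha\cdot\xi)(\beta\cdot\eta)$ and using $\alpha = \frac12(z+\bar z)$, $\beta = \frac{1}{2i}(z-\bar z)$, I would collect the coefficient of $z^m\bar z^n$ with $|m|=|n|=1$. A short bilinear computation gives that the $z_m\bar z_n$-coefficient of $2\phi^\lambda A_1 B_1$ is, up to the factor $\frac{1}{2i}$ coming from $\beta$, a symmetric combination $\tfrac{1}{2i}\phi^\lambda(\xi_m\eta_n - \eta_m\xi_n + \text{symmetric pieces})$; pairing against $\phi^\lambda$ then yields $\langle N^{Im}_{m,n},\phi^\lambda\rangle$ as an explicit integral of $\phi^\lambda$ against products of $\xi$'s and $\eta$'s.

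The key structural input is then to recognize the factor $(n-m)\cdot E(\lambda)$ on the right side as coming from the eigenvalue relations for $\xi_n,\eta_n$. From Lemma \ref{LM:NearLinear}, the eigenvector relation $L(\lambda)(\xi_n,\pm i\eta_n)^T = \pm iE_n(\lambda)(\xi_n,\pm i\eta_n)^T$ unpacks into the scalar identities $L_-(\lambda)\eta_n = E_n(\lambda)\xi_n$ and $L_+(\lambda)\xi_n = E_n(\lambda)\eta_n$. I would use these, together with $L_-(\lambda)\phi^\lambda = 0$ (from $L(\lambda)(0,\phi^\lambda)^T = 0$) and the self-adjointness of $L_\pm(\lambda)$, to rewrite the integral $\langle N^{Im}_{m,n},\phi^\lambda\rangle$. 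Concretely, $N^{Im}_{m,n}$ is a linear combination of $\phi^\lambda \xi_m\eta_n$ and $\phi^\lambda\eta_m\xi_n$; for a term like $\int \phi^\lambda\xi_m\eta_n$, I would move one factor using $\eta_n = E_n(\lambda)^{-1}L_-(\lambda)\xi_n$... — actually the cleaner route is to note $\langle L_-(\lambda)\eta_n,\ \cdot\ \rangle$ relations let us trade $\phi^\lambda$-weighted products for $E$-weighted products because $(\phi^\lambda)^2$ is exactly the potential difference inside $L_\pm(\lambda)$: indeed $L_+(\lambda) - L_-(\lambda) = 2(\phi^\lambda)^2$, and $L_-(\lambda) - (-\Delta+V+\lambda) = (\phi^\lambda)^2$. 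This is what converts a $\phi^\lambda\cdot(\phi^\lambda\xi_m)\cdot\eta_n$-type integrand into $E_m$ or $E_n$ times $\langle\xi_m,\xi_n\rangle$ or $\langle\eta_m,\eta_n\rangle$ plus terms that cancel by the orthogonality relations \eqref{eq:orthogonality}.

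So the ordered steps are: (1) extract $N^{Im}_{m,n}$ for $|m|=|n|=1$ explicitly from \eqref{eq:SecondOrderTerm}–\eqref{eq:A12B12}; (2) write out $\langle N^{Im}_{m,n},\phi^\lambda\rangle$ as a sum of integrals of $\phi^\lambda$ against $\xi\eta$-products; (3) use $L_\pm(\lambda)$ self-adjointness, the eigen-relations $L_-\eta_n = E_n\xi_n$, $L_+\xi_n = E_n\eta_n$, $L_-\phi^\lambda=0$, and the operator identities $L_+ - L_- = 2(\phi^\lambda)^2$, $L_- - (-\Delta+V+\lambda) = (\phi^\lambda)^2$ to trade the $(\phi^\lambda)^2$ weights for factors of $E_m(\lambda)$, $E_n(\lambda)$; (4) collect, using the orthonormality $\langle\xi_n,\eta_m\rangle = \delta_{mn}$ and the vanishing pairings $\langle\phi^\lambda,\xi_n\rangle = \langle\partial_\lambda\phi^\lambda,\eta_n\rangle = 0$ to kill the unwanted cross-terms, arriving at $\frac{1}{4i}(n-m)\cdot E(\lambda)[\langle m\cdot\xi,n\cdot\xi\rangle + \langle m\cdot\eta,n\cdot\eta\rangle]$. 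I expect step (3)—the bookkeeping of which integration by parts against $L_\pm(\lambda)$ produces which factor of $E$, and making sure the antisymmetric combination $(n-m)\cdot E$ (rather than $(n+m)\cdot E$) emerges—to be the main obstacle; the sign structure is exactly where the "cancellation makes the numerator small when the denominator is small" phenomenon lives, so this is the delicate computation and it mirrors the analogous lemma in \cite{GaWe, GaWe2011}, which I would follow closely.
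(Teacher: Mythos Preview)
Your proposal is correct and follows essentially the same route as the paper: extract $N^{Im}_{m,n}=\frac{1}{2i}\phi^\lambda(\xi_n\eta_m-\xi_m\eta_n)$ from \eqref{eq:SecondOrderTerm}, then use $2(\phi^\lambda)^2=L_+(\lambda)-L_-(\lambda)$ together with self-adjointness of $L_\pm$ and the eigen-relations $L_-\eta_n=E_n\xi_n$, $L_+\xi_n=E_n\eta_n$ to produce the factor $(n-m)\cdot E(\lambda)$. Note only that several tools you list in step~(3)--(4) (the identity $L_-\phi^\lambda=0$, the relation $L_--(-\Delta+V+\lambda)=(\phi^\lambda)^2$, and the orthogonality conditions \eqref{eq:orthogonality}) are not needed here; the single substitution $(\phi^\lambda)^2=\tfrac12(L_+-L_-)$ in $\int(\phi^\lambda)^2(\xi_n\eta_m-\xi_m\eta_n)$ already yields the result directly, with no ``symmetric pieces'' or cross-terms to kill.
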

\begin{proof}
We start with deriving an expression for $N_{m,n}^{Im},\ |m|=|n|=1.$

The explicit form of $\displaystyle\sum_{|m|+|n|=2}JN_{m,n}z^{m}\bar{z}^{n}$ in
\eqref{eq:SecondOrderTerm} implies that 
\begin{align*}
\sum_{|m|+|n|=2}N^{Im}_{m,n}z^{m}\bar{z}^{n}=&2\phi^{\lambda}A_{1}B_{1}\\
=&\frac{1}{2i}\phi^{\lambda}(\displaystyle\sum_{n=1}^{N}z_{n}\xi_{n}+\sum_{n=1}^{N}\bar{z}_{n}\xi_{n})
(\sum_{m=1}^{N}z_{m}\eta_{m}-\sum_{m=1}^{N}\bar{z}_{m}\eta_{m}).
\end{align*} 
Take the relevant terms to obtain
\begin{align}
N^{Im}_{m,n}=\frac{1}{2i}\phi^{\lambda}(\xi_{n}\eta_{m}-\xi_{m}\eta_{n}),
\end{align} here we used the notation
\begin{align}
\xi_n=n\cdot \xi,\ \eta_n=n\cdot \eta,\ \text{for}\ n\in (\mathbb{Z}^{+}\cup\{0\})^{N}\ \text{and}\ |n|=1.\label{eq:notationXiEta}
\end{align}

Hence the left hand side of \eqref{eq:UnexpectedFact} takes a new form,
\begin{align*}
\langle
N^{Im}_{m,n},\phi^{\lambda}\rangle=&\frac{1}{2i}\int
(\phi^{\lambda})^{2}(\xi_{n}\eta_{m}-\xi_{m}\eta_{n})\\
=&\frac{1}{4i} \big[\langle [L_{+}(\lambda)-L_{-}(\lambda)]\xi_n, \eta_{m}\rangle-\langle [L_{+}(\lambda)-L_{-}(\lambda)]\xi_m, \eta_{n}\rangle\big],
\end{align*} where we used the fact that $L_{+}(\lambda)-L_{-}(\lambda)=2(\phi^{\lambda})^2$, see \eqref{eq:defLLambda}.
Use the facts $L_{+}(\lambda)$ and $L_{-}(\lambda)$ are self-adjoint, and $$L_{-}(\lambda)\eta_{n}\ =\ E_n(\lambda)\xi_{n},\
\text{and}\ L_{+}(\lambda)\xi_{n}\ =\ E_n(\lambda)\eta_{n},\
n=1,2,\cdot\cdot\cdot,N,$$ in \eqref{eq:eigenf}, and hence the desired result, recall the notations in \eqref{eq:notationXiEta},
\begin{align}
\langle
N^{Im}_{m,n},\phi^{\lambda}\rangle=&\frac{1}{4i}  (E_n(\lambda)-E_m(\lambda))[\langle \xi_m,\ \xi_n\rangle+\langle \eta_m,\ \eta_n\rangle]\nn\\
=&\frac{1}{4i}(n-m)\cdot E(\lambda)\ [\langle m\cdot\xi,\ n\cdot \xi\rangle+\langle m\cdot \eta, \ n\cdot \eta\rangle]
\end{align}
\end{proof}

\section{Proof of the Normal Form Equation \eqref{eq:detailedDescription}}\label{sec:NFT}

We expand the first two terms on the right hand side of the equations for $z_{n}$ in
\eqref{eq:ZNequation} to obtain
\begin{align}
&-\left< \sum_{|m|=2, |n|=1}JN_{m,n} z^{m}\bar{z}^{n},
\left(
\begin{array}{lll}
\eta_{k}\\
-i\xi_{k}
\end{array}\right)\right>
+
\frac{1}{2}\Upsilon\displaystyle\sum_{m=1}^{N}z_{m}\left<\left(
\begin{array}{lll}
-i\eta_{m}\\ \xi_{m} \end{array}\right), 
\left(
\begin{array}{lll}
\eta_{k}\\
i\xi_{k}
\end{array}
\right)\right>\nn\\
&=\sum_{l=1}^{5}\Theta_{l}(k),
\end{align} where, recall the definitions of $JN_{m,n},\ |m|+|n|=3,$ from
\eqref{eq:JNm+n=3}, $\Theta_1(k)$ is defined as
\begin{align}
\Theta_{1}(k) :=-\left< \overline{X_1}R_{2,0},\left(
\begin{array}{lll}
\eta_{k}\\
-i\xi_{k}
\end{array}
\right)\right>=-\left< \sum_{|m|=2}R_{m,0}z^{m},\ (\overline{X_1})^* \left(
\begin{array}{lll}
\eta_{k}\\
-i\xi_{k}
\end{array}
\right)\right>,\label{eq:Theta1k}
\end{align}
where, recall the definition of $2\times 2$ matrix $X$ in \eqref{eq:XDef} and we divide it into two terms
$X=X_{1}+\overline{X_{1}}$ with $X_1$ defined as
\begin{align}
X_{1}:=\left(
\begin{array}{lll}
-i\phi^{\lambda}\ z\cdot\eta,& \phi^{\lambda}\ z\cdot\xi\\
-3 \phi^{\lambda } z\cdot\xi,&
i\phi^{\lambda}\ z\cdot\eta
\end{array}
\right),\label{eq:defX1}
\end{align}

\begin{align}
 \Theta_{2}(k):=&-\left< X_{1}\left(
\begin{array}{lll}
\displaystyle\sum_{n=1}^{N}\sum_{|m|=|l|=1}P_{m,l}^{(n)} z^{m}\bar{z}^{l}\xi_{n}\\
\displaystyle\sum_{n=1}^{N}\sum_{|m|=|l|=1}Q_{m,l}^{(n)}z^{m}\bar{z}^{l}\eta_{n}
\end{array}
\right),\ \left(
\begin{array}{lll}
\eta_{k}\\
-i\xi_{k}
\end{array}
\right)\right>\nn\\
&-\left< \overline{X_{1}}\left(
\begin{array}{lll}
\displaystyle\sum_{|m|=2}z^{m}[\sum_{n=1}^{N}P_{m,0}^{(n)}\xi_{n}+A_{m,0}^{(1)}\partial_{\lambda}\phi^{\lambda}]\\
\displaystyle\sum_{|m|=2}z^{m}[\sum_{n=1}^{N}Q_{m,0}^{(n)}\eta_{n}+A_{m,0}^{(2)}\phi^{\lambda}]
\end{array}
\right), \left(
\begin{array}{lll}
\eta_{k}\\
-i\xi_{k}
\end{array}
\right)\right>,\label{smallDi}
\end{align}

\begin{align*}
\Theta_{3}(k):=&-\frac{1}{8}\left\langle\ 
 ((z\cdot\xi)^{2}-(z\cdot\eta)^{2})\left(
\begin{array}{lll}
i\bar{z}\cdot\eta\\ -\bar{z}\cdot\xi
\end{array}
\right), \left(
\begin{array}{lll}
\eta_{k}\\
-i\xi_{k}
\end{array} \right)\ \right\rangle\\
&+\frac{1}{4}\left\langle\ 
(|z\cdot\xi|^2+|z\cdot\eta|^2)\left(
\begin{array}{lll}
i z\cdot\eta\\ z\cdot\xi
\end{array} \right), \left(
\begin{array}{lll}
\eta_{k}\\ 
-i\xi_{k}
\end{array} \right)\right\rangle,
\end{align*}
\begin{align*}
\Theta_{4}(k):=\frac{1}{2}\ \Upsilon\left\langle\left(
\begin{array}{lll}
-i z\cdot\eta\\
z\cdot\xi
\end{array}
\right), \left(
\begin{array}{lll}
\eta_{k}\\
i\xi_{k}
\end{array}
\right)\right\rangle,
\nn\end{align*}
\begin{align*}
\Theta_{5}(k):=-\left\langle \sum_{|m|=|n|=1}R_{m,n}z^{m}\bar{z}^{n},\ X^{*}_{1}\left(
\begin{array}{lll}
\eta_{k}\\
-i\xi_{k}
\end{array}
\right)\right
\rangle,
\end{align*}
where, recall the definition of real function $\Upsilon$ from
\eqref{ExpanGamma}.

The result is the following
\begin{proposition}\label{prop:smallDivisor}
$Re\ \displaystyle\sum_{k=1}^{N}\bar{z}_k \Theta_1(k)$ can be decomposed into three terms,
\begin{align}
Re\ \sum_{k=1}^{N}\bar{z}_k \Theta_1(k)
=&- C (e_0-\lambda)\ \Gamma(z,\bar{z})\nn\\
&+(e_0-\lambda) \sum_{|m|=|n|=2}C_{m,n}(\lambda)\ E(\lambda)\cdot(m-n)\ z^{m}\bar{z}^{n}\label{eq:ReThet1}\\
&+|e_0-\lambda|^2\ \cO(|z|^4)  \nn
\end{align} where $C$ is a positive constant, $\Gamma(z,\bar{z})$ is the positive term defined in Fermi-Golden-rule \eqref{Gammadef}, $C_{m,n}(\lambda)$ are uniformly bounded constants and recall that $e_0-\lambda>0,$ in Lemma \ref{LM:groundNon}.

For $Re\ \displaystyle\sum_{k=1}^{N}\bar{z}_k \Theta_2(k)$, there exist some constants $D_{m,n}(\lambda)$ such that
\begin{align}
Re\ \sum_{k=1}^{N}\bar{z}_k \Theta_2(k)
=&(e_0-\lambda) \sum_{|m|=|n|=2}D_{m,n}(\lambda)\ E(\lambda)\cdot(m-n)\ z^{m}\bar{z}^{n};\label{eq:ReThet2}
\end{align}

For $n=3,4,5,$ we have
\begin{align}
Re\ \sum_{k=1}^{N}\bar{z}_k \Theta_n (k)=0.\label{eq:ReThet3}
\end{align}
\end{proposition}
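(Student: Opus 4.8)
The plan is to prove the three claims by direct computation, exploiting the precise algebraic structure of the second-order coefficients $R_{m,0}$, $A_{m,0}^{(j)}$, $P_{m,0}^{(n)}$, $Q_{m,0}^{(n)}$ and the matrix $X_1$. The key organizing principle is the same ``small denominator $\Leftrightarrow$ small numerator'' phenomenon already exploited in Lemma~\ref{coordinate-lemma}: whenever a term in $\Theta_1$ or $\Theta_2$ carries a coefficient like $[L(\lambda)+iE(\lambda)\cdot(m-n)-0]^{-1}$ with $(m-n)$ small (i.e. $|m|=|n|$), the associated numerator is proportional to $E(\lambda)\cdot(m-n)$, which is exactly the structure displayed on the right-hand sides of \eqref{eq:ReThet1} and \eqref{eq:ReThet2}.

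First, for $\Theta_1(k)$: I would substitute $R_{m,0}=[L(\lambda)+2iE_{?}(\lambda)-0]^{-1}P_cJN_{m,0}$ from \eqref{eq:Rform}, with $JN_{m,0}$ given explicitly by \eqref{eq:SecondOrderTerm}, and expand $\sum_k\bar z_k\Theta_1(k)$ as a quartic polynomial in $z,\bar z$. The terms of type $z^m\bar z^n$ with $|m|=|n|=2$ split into two groups. When $m\neq n$ the resolvent is evaluated at a nonzero imaginary argument bounded away from the spectrum, the imaginary part $-0$ is inactive, and grouping conjugate pairs produces the middle sum $(e_0-\lambda)\sum C_{m,n}(\lambda)E(\lambda)\cdot(m-n)z^m\bar z^n$; here the overall factor $(e_0-\lambda)$ appears because $\phi^\lambda=\cO(\sqrt{e_0-\lambda})$ by Lemma~\ref{LM:groundNon}, so each $JN_{m,0}$ (cubic in $\phi^\lambda$ and the eigenfunctions, but note $R_{m,0}$ itself carries one power of $\phi^\lambda$ through $JN_{m,0}$, and $X_1$ another) contributes the needed powers. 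The truly diagonal terms, where the ``frequency'' $2E_m(\lambda)+2E_n(\lambda)$ hits the edge $2\lambda$ of $\sigma_{ess}(L(\lambda))$ — precisely the resonant configuration $e_0-e_m-e_n>0$ small — are where the $-i0$ prescription produces a genuine imaginary part; taking $\mathrm{Re}$ and using the Plemelj/Sokhotski formula $\mathrm{Im}\,(L+i\mu-0)^{-1}\sim\pi\delta(L+i\mu)$ converts this into an integral over the sphere $|k|=|k|_{m,n}$ of $|e(x,|k|_{m,n}\sigma)\phi\,\xi_m\xi_n|$, i.e. exactly $\Gamma(z,\bar z)$ as defined in \eqref{Gammadef}, with a positive constant $C$ and the extra $(e_0-\lambda)$ again from the powers of $\phi^\lambda$. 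The remaining error is of order $|e_0-\lambda|^2\cO(|z|^4)$, coming from the $\cO(\delta^3)$ corrections in \eqref{eq:phiAsy} and the $\lambda$-dependence of $E(\lambda)$ versus $e_0-e_n$.

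Second, for $\Theta_2(k)$: here there is no resolvent singularity — the coefficients $P_{m,0}^{(n)},Q_{m,0}^{(n)},A_{m,0}^{(j)}$ were constructed in \eqref{eq:pq20and02} and thereafter as solutions of linear systems whose determinants $DetD$ stay away from zero by \eqref{eq:awayZero}. So $\sum_k\bar z_k\Theta_2(k)$ is a genuine quartic polynomial with bounded coefficients. The claim \eqref{eq:ReThet2} is that, after taking $\mathrm{Re}$, every surviving monomial has $|m|=|n|=2$ and carries a factor $E(\lambda)\cdot(m-n)$. The mechanism: monomials with $|m|\neq|n|$ are oscillatory and, by the reality relations $P_{m,n}^{(k)}=\overline{P_{n,m}^{(k)}}$ etc. together with the structure of $X_1$ (note $X=X_1+\overline{X_1}$), combine into pure imaginaries upon pairing with their conjugates, so their real part vanishes; for the $|m|=|n|=2$ monomials one inserts the defining equations \eqref{eq:pq20and02} to trade $P_{m,0},Q_{m,0}$ for $\langle N_{m,0}^{Im},\eta_k\rangle$ etc., and the antisymmetry between $m$ and $n$ under the $\mathrm{Re}\,\sum_k\bar z_k$ operation forces the diagonal ($m=n$) contribution to cancel, leaving a coefficient proportional to the frequency difference $E(\lambda)\cdot(m-n)$. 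The factor $(e_0-\lambda)$ is again bookkeeping of powers of $\phi^\lambda$ in $X_1$ and in $N_{m,0}$.

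Third, the vanishing \eqref{eq:ReThet3} for $n=3,4,5$ is pure algebra: $\Theta_3,\Theta_4,\Theta_5$ involve no resolvents at all. For $\Theta_4$, substitute $\Upsilon$ real from \eqref{eq:Gamma11} and compute $\sum_k\bar z_k\langle(-iz\cdot\eta,\,z\cdot\xi)^T,(\eta_k,i\xi_k)^T\rangle=\sum_k\bar z_k[-i\langle z\cdot\eta,\eta_k\rangle+i\langle z\cdot\xi,\xi_k\rangle]$; using the orthonormality \eqref{eq:orthogonality} this is $i\Upsilon(\text{something}-\text{its own structure})$, and one checks the whole expression $\mathrm{Re}(\tfrac12\Upsilon\cdot i\cdot(\cdots))$ is the real part of a purely imaginary quantity, hence zero. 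For $\Theta_5$, the term $\sum\bar z_k\langle R_{m,n}z^m\bar z^n,X_1^*(\eta_k,-i\xi_k)^T\rangle$ with $|m|=|n|=1$: pairing with the conjugate monomial and using $X=X_1+\overline{X_1}$ shows $\mathrm{Re}\sum_k\bar z_k\Theta_5(k)=\tfrac12\mathrm{Re}\sum_k\bar z_k\langle R_{m,n}z^m\bar z^n,(X_1+\overline{X_1})^*(\cdots)\rangle$, but $R_{m,n}$ for $|m|=|n|=1$ is self-adjoint-symmetric enough (the argument of the resolvent is $iE(\lambda)\cdot(m-n)$, pure imaginary, and by Lemma~\ref{coordinate-lemma}-type reasoning the relevant bilinear forms are real and symmetric in $m\leftrightarrow n$) that the sum over the $|m|=|n|=1$ index set with $z^m\bar z^n$ produces, after $\mathrm{Re}$, a cancellation. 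For $\Theta_3$, expand $((z\cdot\xi)^2-(z\cdot\eta)^2)(i\bar z\cdot\eta,-\bar z\cdot\xi)^T$ and $(|z\cdot\xi|^2+|z\cdot\eta|^2)(iz\cdot\eta,z\cdot\xi)^T$ against $(\eta_k,-i\xi_k)^T$, contract with $\bar z_k$, sum over $k$ using \eqref{eq:orthogonality}, and observe each resulting quartic is a purely imaginary multiple of $|z\cdot\xi|^2,|z\cdot\eta|^2,\langle z\cdot\xi,z\cdot\eta\rangle$-type scalars, so the real part is zero.

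\textbf{Main obstacle.} The delicate step is the $\Theta_1$ computation: one must correctly isolate which quartic monomials feel the $-i0$ prescription (those and only those with $2E_m+2E_n$ at the continuum edge, equivalently $|k|_{m,n}$ real) and verify that the Plemelj residue reassembles exactly into $\Gamma(z,\bar z)=\|\sum_{m,n}\Psi_{m,n}(\sigma)z_mz_n\|_{L^2(\mathbb S^2)}^2$ — including matching the definition \eqref{eq:Psimn} of $\Psi_{m,n}$ through $e(x,k)$, and getting the single overall positive constant $C$ and the single power $(e_0-\lambda)$ right. The bookkeeping of powers of $\delta\sim\sqrt{e_0-\lambda}$ across $R_{m,0}$, $JN_{m,0}$, $X_1$, and the $\cO(\delta^3)$ tails in Lemma~\ref{LM:groundNon} is where sign and magnitude errors are most likely, so I would do that power-counting carefully and separately before assembling the three pieces.
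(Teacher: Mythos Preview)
Your treatment of $\Theta_2$ and $\Theta_3,\Theta_4,\Theta_5$ is essentially right and matches the paper's approach. For $\Theta_2$ the paper does exactly what you outline: substitute the defining relations \eqref{eq:pq20and02} to replace $\langle N^{Re}_{l,0},\xi_n\rangle$, $\langle N^{Im}_{l,0},\eta_n\rangle$ by combinations of $P^{(n)}_{l,0},Q^{(n)}_{l,0}$, obtaining cross terms $P_{m,0}^{(n)}\overline{Q_{l,0}^{(n)}}-Q_{m,0}^{(n)}\overline{P_{l,0}^{(n)}}$ multiplied by $l\cdot E(\lambda)$; then the $\mathrm{Re}\,a=\mathrm{Re}\,\bar a$ symmetrization plus swapping $m\leftrightarrow l$ converts $l\cdot E(\lambda)$ into $(l-m)\cdot E(\lambda)$. (Your remark about $|m|\ne|n|$ monomials is irrelevant here: all monomials in $\sum_k\bar z_k\Theta_2(k)$ already have $|m|=|n|=2$.) For $\Theta_3,\Theta_4,\Theta_5$ the paper's argument is shorter than yours: one checks directly that $\sum_k\bar z_k\Theta_j(k)$ is purely imaginary for $j=3,4$, and for $j=5$ that $\sum_{|m|=|n|=1}R_{m,n}z^m\bar z^n$ is real while the vector it is paired with is purely imaginary; no symmetry juggling is needed.

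Your argument for $\Theta_1$ has a genuine gap. You claim that when $m\neq n$ the resolvent in $R_{m,0}$ is ``evaluated at a nonzero imaginary argument bounded away from the spectrum'' so that the $-0$ prescription is inactive. This is false: $R_{m,0}=[L(\lambda)+iE(\lambda)\cdot m-0]^{-1}P_cJN_{m,0}$ has its argument at $iE(\lambda)\cdot m$ with $|m|=2$, and by the condition $2E_k(\lambda)>\lambda$ this lies \emph{inside} $\sigma_{ess}(L(\lambda))=[i\lambda,i\infty)$ for \emph{every} such $m$, independently of $n$. So the $-0$ is always active, and your dichotomy ``diagonal terms $\to\Gamma$, off-diagonal terms $\to E(\lambda)\cdot(m-n)$'' cannot be right; in fact every pair $(m,n)$ contributes to $\Gamma$. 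The paper's mechanism is different: after computing $(\overline{X_1})^*\binom{\eta_k}{-i\xi_k}$ and extracting the leading $(e_0-\lambda)$ powers (your power-counting is fine), one uses the identity $\mathrm{Im}\langle f,g\rangle=\tfrac{1}{2i}(\langle f,g\rangle-\langle g,f\rangle)$ together with $\langle f,(-\Delta+V-h-i0)^{-1}g\rangle=\langle(-\Delta+V-h+i0)^{-1}f,g\rangle$ to rewrite the $(m,n)$ term as $\tfrac{1}{i}\langle L(m,n)P_c\phi(\xi^{lin})^m,\phi(\xi^{lin})^n\rangle$ with
\[
L(m,n)=[-\Delta+V+\lambda-E(\lambda)\cdot m-i0]^{-1}-[-\Delta+V+\lambda-E(\lambda)\cdot n+i0]^{-1}.
\]
Then one splits $L(m,n)=L(m,m)+[L(m,n)-L(m,m)]$. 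The first piece, via the spectral-measure formula \eqref{eq:measure}, yields the sphere integral $C_m\int_{\mathbb S^2}\widehat{\phi(\xi^{lin})^m}(C_m\sigma)\overline{\widehat{\phi(\xi^{lin})^n}}(C_m\sigma)\,d\sigma$; summed over $m,n$ and adjusted by the smooth dependence of $C_m$ on $E(\lambda)\cdot m$, this reassembles into $\Gamma(z,\bar z)$ plus an $E(\lambda)\cdot(m-n)$ correction. The second piece is a difference of two $+i0$ resolvents at energies $E(\lambda)\cdot m$ and $E(\lambda)\cdot n$, hence by the resolvent identity is proportional to $E(\lambda)\cdot(m-n)$ with a bounded coefficient. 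This is the missing step you need for \eqref{eq:ReThet1}.
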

The proposition will be proved in subsequent subsections.

Next we prove the desired result \eqref{eq:detailedDescription}.\\
\textbf{Proof of \eqref{eq:detailedDescription}}
By the estimates above we have that
\begin{align}
\partial_{t}|z|^2=&-2C(e_0-\lambda)\ \Gamma(z,\bar{z})+(e_0-\lambda) \sum_{|m|=|n|=2} C_{m,n} \ E(\lambda)\cdot (m-n)z^{m}\bar{z}^n\nn\\
&+\cO((e_0-\lambda)^2|z|^4)+|z|Remainer,
\end{align} with $C_{m,n}$ being some constant. Here we can take $\displaystyle\sum_{|m|=|n|=2} C_{m,n} \ E(\lambda)\cdot (m-n)z^{m}\bar{z}^n$ to be real since it is in the equation for real parameter $|z|^2$ and $C\Gamma(z,\bar{z})$ is real. This implies $$\sum_{|m|=|n|=2} C_{m,n} \ E(\lambda)\cdot (m-n)z^{m}\bar{z}^n=\sum_{|m|=|n|=2} \overline{C_{m,n}} \ E(\lambda)\cdot (m-n)\bar{z}^{m} z^n,$$ and hence forces 
\begin{align}
C_{m,n}=-\overline{C_{n,m}}.\label{eq:realIma}
\end{align}
By observing that $$E(\lambda)\cdot (m-n)z^{m}\bar{z}^n=i\partial_t z^{m}\bar{z}^n+|z|Remainder,$$ the fact that $(e_0-\lambda) \displaystyle\sum_{|m|=|n|=2}i C_{m,n}  z^{m}\bar{z}^{n}$ is real implied by \eqref{eq:realIma}, we can define a new nonnegative parameter $\tilde{z}$ satisfying $\tilde{z}^2=|z|^2-(e_0-\lambda) \displaystyle\sum_{|m|=|n|=2}i C_{m,n}  z^{m}\bar{z}^{n}$ such that
\begin{align}
\partial_{t} \tilde{z}^2= -2C(e_0-\lambda)\ \Gamma(z,\bar{z})+\cO((e_0-\lambda)^2|z|^4)+|z|Remainer
\end{align}
which is the desired estimate \eqref{eq:detailedDescription}.

\begin{flushright}
$\square$
\end{flushright}

Next we prove Proposition \ref{prop:smallDivisor}. In the proof the following results, from \cite{GaWe2011}, will be used. Recall that the function $\phi$ is the ground state for $-\Delta+V$ with eigenvalue $-e_0$, the functions $\xi^{lin}_{k}$, $k=1,2,\cdots,N,$ are neutral modes with eigenvalues $-e_k.$
\begin{lemma}
There exist constants $C_0,\ C_1\in\mathbb{R}$ such that in the space $\langle x\rangle^{-4}\mathcal{H}^{2}$
\begin{align}
\phi^{\lambda}=&C_0 (e_0-\lambda)^{\frac{1}{2}} \phi+\cO(|e_0-\lambda|^{\frac{3}{2}}),\nn\\
\partial_{\lambda}\phi^{\lambda}=&C_1 (e_0-\lambda)^{-\frac{1}{2}}\phi+\cO(|e_0-\lambda|^{\frac{1}{2}}),\label{eq:LambdaPhi2}\\
&\frac{1}{\langle \phi^{\lambda},
\partial_{\lambda}\phi^{\lambda}\rangle}\lesssim 1.\nn
\end{align}
For the neutral modes we have
\begin{align}\label{eq:asympto}
\|\langle x\rangle^{4}(\eta_{m}-\xi_{m}^{lin})\|_{\mathcal{H}^2},\ \|\langle x\rangle^{4}(\xi_{m}-\xi_{m}^{lin})\|_{\mathcal{H}^2},\ \|\langle x\rangle^{4}(\xi_{m}-\eta_{m})\|_{\mathcal{H}^2}=\cO(|e_0-\lambda|).
\end{align} Recall $P_{c}^{lin}$ is the orthogonal project onto the essential spectrum of $-\Delta+V$, and $P_{c}^{\lambda}$ of \eqref{Pcdef} is the Riesz projection onto the essential spectrum of $L(\lambda)$
\begin{align}\label{eq:projection}
P_{c}^{\lambda}=P_{c}^{lin}\left(
\begin{array}{ll}
1&0\\
0&1
\end{array}
\right)+\cO(|e_0-\lambda|).
\end{align} 
\end{lemma}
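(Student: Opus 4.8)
The statement to prove is the final lemma: the asymptotic expansions \eqref{eq:LambdaPhi2}, \eqref{eq:asympto}, \eqref{eq:projection} for $\phi^\lambda$, $\partial_\lambda\phi^\lambda$, the neutral modes, and the projection $P_c^\lambda$, all in terms of the small parameter $e_0-\lambda$, valid in the weighted space $\langle x\rangle^{-4}\mathcal{H}^2$. The whole lemma is a perturbation-theory computation around the linear operator $-\Delta+V$; the plan is to set up each eigenvalue/eigenfunction problem, expand in the small parameter, and bound remainders in the exponentially-weighted norms using the spectral gap and the exponential decay of $\phi$, $\xi_k^{lin}$, and $V$.

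First I would treat $\phi^\lambda$. Lemma \ref{LM:groundNon} already gives $\phi^\lambda=\delta\phi+\cO(\delta^3)$ with $\delta\sim(\int\phi^4)^{-1/2}(e_0-\lambda)^{1/2}$, so the leading term $C_0(e_0-\lambda)^{1/2}\phi$ with $C_0=(\int\phi^4)^{-1/2}$ is immediate; the point here is to upgrade the remainder estimate from an $\mathcal{L}^2$ (or $\mathcal{H}^2$) bound to one in $\langle x\rangle^{-4}\mathcal{H}^2$. For this I would use the defining elliptic equation $(-\Delta+V+\lambda)\phi^\lambda=-(\phi^\lambda)^3$, write $\phi^\lambda=\delta\phi+r$, and observe that $r$ solves $(-\Delta+V+\lambda)r = -(\phi^\lambda)^3+\delta e_0\phi - \delta(\lambda-e_0)\phi$ — a source that is $\cO(\delta^3)$ and exponentially localized (since $\phi$, $\phi^\lambda$ decay exponentially and $\lambda$ is bounded away from $0$ so the operator $-\Delta+V+\lambda$ has a nice resolvent). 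Standard weighted resolvent estimates (Agmon-type, available since $\lambda>0$ is below the essential spectrum) then transfer exponential localization of the source to $r$ and give $\|\langle x\rangle^4 r\|_{\mathcal{H}^2}=\cO(\delta^3)=\cO(|e_0-\lambda|^{3/2})$. Differentiating the equation in $\lambda$ gives the analogous equation for $\partial_\lambda\phi^\lambda$, whose leading singularity $\sim(e_0-\lambda)^{-1/2}\partial_\delta\phi^\lambda\cdot\partial_\lambda\delta$ produces $C_1(e_0-\lambda)^{-1/2}\phi$ with remainder $\cO(|e_0-\lambda|^{1/2})$ in the same weighted norm; the bound $1/\langle\phi^\lambda,\partial_\lambda\phi^\lambda\rangle\lesssim1$ then follows since $\langle\phi^\lambda,\partial_\lambda\phi^\lambda\rangle = C_0C_1\|\phi\|_2^2 + \cO(e_0-\lambda)$ is bounded above and below.

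Next, the neutral modes. The eigenvectors $(\xi_n,\pm i\eta_n)^T$ of $L(\lambda)$ with eigenvalue $\pm iE_n(\lambda)$ satisfy $L_-(\lambda)\eta_n=E_n(\lambda)\xi_n$, $L_+(\lambda)\xi_n=E_n(\lambda)\eta_n$, and as $\lambda\to e_0$ the potentials $(\phi^\lambda)^2,3(\phi^\lambda)^2$ in $L_\pm(\lambda)$ vanish like $e_0-\lambda$, so $L_\pm(\lambda)\to -\Delta+V+e_0$ and $E_n(\lambda)\to e_0-e_n$; the limiting eigenvalue problem is just $(-\Delta+V)\xi^{lin}_n = -e_n\xi^{lin}_n$, with the same limit for $\eta_n$. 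I would write $\xi_n=\xi^{lin}_n+s_n$, $\eta_n=\xi^{lin}_n+t_n$, insert into the coupled system, and use the $\cO(e_0-\lambda)$ size of both the potential perturbations and the eigenvalue shift together with invertibility of $-\Delta+V+e_n$ on the orthogonal complement of the $e_n$-eigenspace (plus the normalization \eqref{eq:orthogonality}) to solve for $s_n,t_n=\cO(e_0-\lambda)$; the exponential localization of $\xi^{lin}_n$ and $V$ again upgrades this to $\|\langle x\rangle^4 s_n\|_{\mathcal{H}^2},\|\langle x\rangle^4 t_n\|_{\mathcal{H}^2}=\cO(|e_0-\lambda|)$, and subtracting gives $\|\langle x\rangle^4(\xi_n-\eta_n)\|_{\mathcal{H}^2}=\cO(|e_0-\lambda|)$. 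Finally, for \eqref{eq:projection}, I would start from the explicit Riesz projection formula \eqref{eq:PdProjection} for $P_d^\lambda$: each of its rank-one pieces is built from $\phi^\lambda,\partial_\lambda\phi^\lambda,\xi_n,\eta_n$, and plugging in the expansions just obtained shows $P_d^\lambda = P_d^{lin}\,\mathrm{diag}(1,1) + \cO(e_0-\lambda)$ (the $\phi^\lambda$/$\partial_\lambda\phi^\lambda$ block contributing the $\phi$-part of $P_c^{lin}$'s complement and the $\xi_n,\eta_n$ sum contributing the $\xi^{lin}_n$-parts), hence $P_c^\lambda=I-P_d^\lambda = P_c^{lin}\,\mathrm{diag}(1,1)+\cO(e_0-\lambda)$; one must track that the $(e_0-\lambda)^{\pm1/2}$ singular/vanishing factors in $\phi^\lambda$ and $\partial_\lambda\phi^\lambda$ cancel in the product $|0,\phi^\lambda\rangle\langle0,\partial_\lambda\phi^\lambda|$ and its transpose, which they do by construction of $P_d$ with its $\partial_\lambda\|\phi^\lambda\|^2$ prefactor.

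The main obstacle is the weighted-norm bookkeeping: all remainder estimates must hold in $\langle x\rangle^{-4}\mathcal{H}^2$, not merely in $\mathcal{L}^2$, so at each step one needs weighted resolvent (Agmon) bounds for $-\Delta+V+\lambda$ (for $\phi^\lambda$) and for $-\Delta+V+e_n$ restricted to the appropriate complement (for the neutral modes), and one must verify that the source terms produced by the perturbation are genuinely exponentially localized — this relies on Condition (VA), the exponential decay of $V$, together with exponential decay of the unperturbed eigenfunctions. A secondary subtlety is handling the $(e_0-\lambda)^{-1/2}$ blow-up of $\partial_\lambda\phi^\lambda$ consistently: one should work with the renormalized quantity $(e_0-\lambda)^{1/2}\partial_\lambda\phi^\lambda$ throughout and only reinstate the singular factor at the end. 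Since the claim and its proof are essentially identical to the corresponding lemma in \cite{GaWe2011} (as the paper itself notes), I expect the cleanest write-up to cite that reference for the detailed weighted estimates and present only the expansion structure here.
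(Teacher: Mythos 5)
The paper offers no proof of this lemma at all: it is imported wholesale from \cite{GaWe2011} (the sentence immediately preceding the statement says "the following results, from \cite{GaWe2011}, will be used"), so your closing suggestion to cite that reference is exactly what the author does, and your sketch is the standard bifurcation-theoretic reconstruction of it. There is, however, one step in your sketch that would fail as literally written. For the expansion of $\phi^{\lambda}$ you set $r=\phi^{\lambda}-\delta\phi$, note that the source in $(-\Delta+V+\lambda)r=\dots$ is $\cO(\delta^{3})$ and exponentially localized, and then invoke "standard weighted resolvent estimates" for $-\Delta+V+\lambda$, asserting it "has a nice resolvent" because $\lambda$ is bounded away from $0$. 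That keeps the essential spectrum $[\lambda,\infty)$ away from zero, but $-\Delta+V+\lambda$ also has the discrete eigenvalue $\lambda-e_{0}\to 0$, so its resolvent blows up like $(e_{0}-\lambda)^{-1}\sim\delta^{-2}$ in the $\phi$-direction; applied naively to an $\cO(\delta^{3})$ source this only yields $r=\cO(\delta)$, which is useless. The repair is the usual Lyapunov--Schmidt reduction: project the equation onto $\phi$ and onto its orthogonal complement. The $\phi$-component of the source vanishes precisely because of the bifurcation equation relating $\delta$ and $e_{0}-\lambda$ (this is where $\delta=(\int\phi^{4})^{-1/2}(e_{0}-\lambda)^{1/2}+o((e_{0}-\lambda)^{1/2})$ of Lemma \ref{LM:groundNon} comes from), while on the complement $-\Delta+V+\lambda$ is invertible with a bound uniform in $\lambda$ (spectral gap to $\lambda-e_{1}\approx e_{0}-e_{1}>0$ and to $[\lambda,\infty)$), and there the weighted Agmon estimate does give the $\cO(|e_{0}-\lambda|^{3/2})$ remainder in $\langle x\rangle^{-4}\mathcal{H}^{2}$. (You implicitly make the right move for the neutral modes, where you invert on the complement of the $e_{n}$-eigenspace; the same care is needed for the ground state.) Apart from this, and a harmless algebra slip in your displayed source term (it should read $-(\phi^{\lambda})^{3}+\delta(e_{0}-\lambda)\phi$), the rest of the outline --- differentiating in $\lambda$ via the renormalized quantity, the coupled $L_{\pm}(\lambda)$ system for $\xi_{n},\eta_{n}$, and reading off \eqref{eq:projection} from the explicit formula \eqref{eq:PdProjection} after checking that the $(e_{0}-\lambda)^{\pm 1/2}$ factors cancel in the rank-one products --- is sound and is what the cited reference carries out.
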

\subsection{Proof of \eqref{eq:ReThet1}}
We start from the definition of $\Theta_1(k)$ in \eqref{eq:Theta1k}.
Compute directly to obtain
\begin{align*}
(\overline{X_1})^* \left(
\begin{array}{lll}
\eta_{k}\\
-i\xi_{k}
\end{array}
\right)=\left(
\begin{array}{ll}
-i\phi^{\lambda} z\cdot \eta\ \eta_k+3i \phi^{\lambda} z\cdot \xi\ \xi_k\\
\phi^{\lambda} z\cdot \xi\ \eta_k+\phi^{\lambda} z\cdot \eta\ \xi_k
\end{array}
\right).
\end{align*}
Hence 
\begin{align}
Re \sum_{k}\bar{z}_k\Theta_{1}(k)=-\left< \sum_{|m|=2}R_{m,0}z^{m},\  \left(
\begin{array}{lll}
-i\phi^{\lambda} (z\cdot \eta)^2+3i \phi^{\lambda} (z\cdot \xi)^2\\
2\phi^{\lambda} z\cdot \xi\ z\cdot \eta
\end{array}
\right)\right>.\label{eq:theta1k}
\end{align}

We extract its main part by define 
\begin{align}
D:=Re \sum_{|m|=2}&\big\langle z^{m}\big[(-\Delta+V+\lambda)J+i E(\lambda)\cdot m-0\big]^{-1} P_c \phi  (\xi^{lin})^m \left(
\begin{array}{lll}
-i\\
1
\end{array}
\right),\ \times\nn\\
&\phi (z\cdot \xi^{lin})^2\left(
\begin{array}{lll}
i\\
1
\end{array}
\right)
\big\rangle.\label{eq:defD}
\end{align}
Here $(\xi^{lin})^{m}$ is defined as, for $m=(m_1,\ m_2,\ \cdots, \ m_N)\in (\mathbb{Z}^{+}\cup\{0\})^{N}$,
\begin{align*}
(\xi^{lin})^{m}:=(\xi_1^{lin})^{m_1}(\xi_2^{lin})^{m_2}\cdots (\xi_N^{lin})^{m_N}.
\end{align*}

The result is
\begin{lemma}\label{LM:simplify}
\begin{align}
Re \sum_{k=1}^{N}\bar{z}_k\Theta_{1}(k)&=-C(e_0-\lambda ) \ D+\cO(|e_0-\lambda |^2 |z|^4).
\end{align}
\end{lemma}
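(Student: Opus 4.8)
The plan is to substitute the asymptotic expansions from the quoted lemma and track which powers of $(e_0-\lambda)$ appear. Recall that $R_{m,0}$ is defined in \eqref{eq:Rform} as $R_{m,0}=[L(\lambda)+iE(\lambda)\cdot m-0]^{-1}P_cJN_{m,0}$ with $JN_{m,0}$ quadratic in $\phi^\lambda$; by \eqref{eq:LambdaPhi2} we have $\phi^\lambda=C_0(e_0-\lambda)^{1/2}\phi+\cO(|e_0-\lambda|^{3/2})$, so $JN_{m,0}=(e_0-\lambda)\cdot(\text{something built from }\phi)+\cO(|e_0-\lambda|^2)$. Using \eqref{eq:projection} to replace $P_c^\lambda$ by $P_c^{lin}$ up to $\cO(|e_0-\lambda|)$ errors, and \eqref{eq:defLLambda} together with the fact that on the continuous-spectrum subspace $L(\lambda)$ reduces (to leading order) to $J(-\Delta+V+\lambda)$, the resolvent $[L(\lambda)+iE(\lambda)\cdot m-0]^{-1}$ becomes $[(-\Delta+V+\lambda)J+iE(\lambda)\cdot m-0]^{-1}$ up to $\cO(|e_0-\lambda|)$ relative error. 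So $R_{m,0}z^m$ equals $(e_0-\lambda)^{1/2}$ times a function of the form appearing inside $D$, times $(e_0-\lambda)^{1/2}$ again from the $\phi^\lambda$ inside $JN_{m,0}$, i.e. an overall factor $(e_0-\lambda)$, up to $\cO(|e_0-\lambda|^2)$.

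The key bookkeeping step: in \eqref{eq:theta1k} the second slot of the pairing also carries a factor $\phi^\lambda=C_0(e_0-\lambda)^{1/2}\phi+\dots$ in front of the quadratic-in-$z$ expression, and the $\xi,\eta$ appearing there can be replaced by $\xi^{lin}$ up to $\cO(|e_0-\lambda|)$ by \eqref{eq:asympto}. Multiplying: $R_{m,0}z^m\sim(e_0-\lambda)\times(\cdots)$ from one factor and the test vector $\sim(e_0-\lambda)^{1/2}\times(\cdots)$... wait, I need to recount. Actually $R_{m,0}$ carries $(e_0-\lambda)^{1}$ total (one $(e_0-\lambda)^{1/2}$ from the source $JN_{m,0}$ which is quadratic in $\phi^\lambda$ hence $\sim(e_0-\lambda)$, and the resolvent is $\cO(1)$ in this scaling since $iE(\lambda)\cdot m\to i(e_0-e_k-e_l)\neq 0$). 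The second factor in the pairing carries $(e_0-\lambda)^{1/2}$ from its single $\phi^\lambda$. That gives $(e_0-\lambda)^{3/2}$, not $(e_0-\lambda)$. But $\Theta_1(k)$ is paired further with $\bar z_k$ and the full expression should be $\cO(|z|^4)$ — so actually $R_{m,0}z^m$ is $\cO(|z|^2)$, the test vector is $\cO(|z|^2)$, giving $\cO(|z|^4)$, and the $(e_0-\lambda)$ powers are: hmm, $JN_{m,0}\sim(e_0-\lambda)$ (it's $(\phi^\lambda)^2\cdot$ quadratic-in-$\xi^{lin}$... no wait $JN_{m,0}$ from \eqref{eq:SecondOrderTerm} is $\phi^\lambda$ times quadratic in the mode amplitudes, but $R_{m,0}$ strips the $z^m$). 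So $JN_{m,0}=\phi^\lambda\cdot(\xi^{lin})^m\cdot(\text{const})+\dots\sim(e_0-\lambda)^{1/2}$. Then $R_{m,0}\sim(e_0-\lambda)^{1/2}$, the test vector $\sim(e_0-\lambda)^{1/2}$, product $\sim(e_0-\lambda)$. Good — that matches the claimed leading factor $C(e_0-\lambda)$ in front of $D$, and $D$ itself is manifestly the $(e_0-\lambda)$-independent piece (built from $\phi$ and $\xi^{lin}$). The error terms: replacing each $\phi^\lambda\mapsto C_0(e_0-\lambda)^{1/2}\phi$, each $\xi,\eta\mapsto\xi^{lin}$, each $P_c^\lambda\mapsto P_c^{lin}$, and the resolvent by its linear counterpart, each introduces a relative error $\cO(|e_0-\lambda|)$, hence an absolute error $\cO(|e_0-\lambda|^2|z|^4)$, which is exactly the remainder claimed.

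So the proof structure is: (i) write $Re\sum_k\bar z_k\Theta_1(k)$ as in \eqref{eq:theta1k}; (ii) in the resolvent defining $R_{m,0}$, expand $L(\lambda)=J(-\Delta+V+\lambda)+\cO((e_0-\lambda))$ on $\mathrm{ran}\,P_c$ — using $L_\pm(\lambda)=-\Delta+V+\lambda+\cO((e_0-\lambda))$ — and use that $iE(\lambda)\cdot m$ stays bounded away from $0$ so the resolvent is uniformly bounded and its error is controlled; (iii) substitute \eqref{eq:LambdaPhi2}, \eqref{eq:asympto}, \eqref{eq:projection} into both the source term $P_cJN_{m,0}$ and the second slot of the pairing, collecting the common scalar factor; (iv) identify the leading term with $-C(e_0-\lambda)D$ for an appropriate positive constant $C$ and absorb everything else into $\cO(|e_0-\lambda|^2|z|^4)$. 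The main obstacle I anticipate is step (ii): justifying rigorously that the boundary-value resolvent $[L(\lambda)+iE(\lambda)\cdot m-0]^{-1}P_c$ converges, as $\lambda\to e_0$, to $[(-\Delta+V+\lambda)J+iE(\lambda)\cdot m-0]^{-1}P_c^{lin}$ with an $\cO(|e_0-\lambda|)$-size operator-norm error in the relevant weighted spaces $\langle x\rangle^{-4}\mathcal H^2\to\langle x\rangle^{4}\mathcal H^2$ — this requires a limiting-absorption-principle estimate uniform near the threshold, which is where assumption (SA) (no threshold resonance) is essential, and is precisely the content cited from \cite{GaWe2011}. The sign and positivity of $C$, needed for the next proposition, follows once $D$ is matched against the Fermi-Golden-rule quantity $\Gamma$ in the subsequent subsection, so at this stage I would only track that $C>0$ is a fixed constant independent of $\lambda$ and $z$.
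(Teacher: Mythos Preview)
Your proposal is correct and follows essentially the same approach as the paper: expand both factors in the pairing \eqref{eq:theta1k} using \eqref{eq:LambdaPhi2}, \eqref{eq:asympto}, \eqref{eq:projection}, and the resolvent approximation $[L(\lambda)+iE(\lambda)\cdot m-0]^{-1}=[(-\Delta+V+\lambda)J+iE(\lambda)\cdot m-0]^{-1}+\cO(e_0-\lambda)$, then collect the leading $(e_0-\lambda)$ factor. One small remark on your step (ii): the boundary-value resolvent is evaluated at $iE(\lambda)\cdot m$, which lies strictly inside the essential spectrum and away from the threshold $\pm i\lambda$ (since $E(\lambda)\cdot m-\lambda>0$), so the uniform limiting-absorption bound you need is a standard interior-of-spectrum estimate and does not directly invoke the threshold assumption (SA).
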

The lemma will be proved in Part \ref{Pa:simplify} below.

Next we study the term $D$. To facilitate our estimate we diagonalize the matrix operator in \eqref{eq:defD}. Define a unitary matrix $U$ by
 \begin{align}
 U\ :=\ \frac{1}{\sqrt{2}}\left(
\begin{array}{lll}
1&i\\ 
i&1 
\end{array}
\right),\label{Udef}
\end{align} then we have that 
$$J=i U \sigma_3 U^*,$$ with $\sigma_3$ being the third Pauli matrix.

Inset the identity $UU^*=U^* U=Id$ into appropriate places in the inner product of $D$ to obtain, recall the convention that $\langle f,\ g\rangle=\int f(x)\bar{g}(x)\ dx,$
\begin{align}
D
=&2\sum_{|m|=|n|=2}Re \langle
\big[i(-\Delta+V+\lambda- E(\lambda)\cdot m)+0\big]^{-1} P_c\phi (\xi^{lin})^{m},\ \phi (\xi^{lin})^{n}
\rangle \ z^{m}\bar{z}^{n}\nn\\
=&2 \sum_{|m|=|n|=2}Im\langle\big[-\Delta+V+\lambda- E(\lambda)\cdot m-i0\big]^{-1} P_c\phi (\xi^{lin})^{m},\ \phi (\xi^{lin})^{n}\rangle\ z^{m}\bar{z}^{n}.\label{eq:secondRes}
\end{align}

To cast the expression into a convenient form, we use the following two simple facts, for any functions $f,\ g$ and real constant $h$, $Im\langle f,\ g\rangle=\frac{1}{2i}[\langle f,\ g\rangle-\langle g,\ f\rangle]$ and $\langle f,\ (-\Delta+V-h-i0)^{-1}g\rangle=\langle (-\Delta+V-h+i0)^{-1}f,\ g\rangle.$ Compute directly to obtain
\begin{align}
D=&\frac{1}{i} \sum_{|m|=|n|=2} \langle L(m,n)\ P_c\phi (\xi^{lin})^{m},\ \phi (\xi^{lin})^{n}\rangle\ z^{m}\bar{z}^{n}.\label{express}
\end{align}
with $L(m,n)$ being a linear operator defined as
$$L(m,n):=
[-\Delta+V+\lambda- E(\lambda)\cdot m-i0]^{-1}-[-\Delta+V+\lambda- E(\lambda)\cdot n+i0]^{-1}
$$

In studying \eqref{express}, the main tool is a well known fact that, see e.g. \cite{RSIII}, for any constant $h>0,$
\begin{align}
&\langle \frac{1}{i} \big[ [-\Delta+V-h^2-i0]^{-1}-[-\Delta+V-h^2+i0]^{-1}\big]\ P_c f,\ g\rangle\nn\\
=& C h\int_{\mathbb{S}^2} \widehat{f}(h\sigma) \ \overline{\widehat{g}}(h\sigma)\ d\sigma  \label{eq:measure}
\end{align} where $C$ is a positive constant, and the complex function $\widehat{f}$ is defined as,
\begin{align} 
\widehat{f}(k):= \int_{\mathbb{R}^3} f(x)\ e(x, k)\ dx,
\end{align} hence $\widehat{f}(h\sigma)$ is $\widehat{f}$ restricted to the sphere $|k|=h\sigma,\ \sigma\in \mathbb{S}^2.$ Here the complex function $e:\ \mathbb{R}^3\times \mathbb{R}^3\rightarrow \mathbb{C}$ is defined as $$e(x,k):=[1+(-\Delta-|k|^2-i0)^{-1}V(x)]^{-1}\ e^{ix\cdot k}.$$ 

We continue to study \eqref{express}.
For the easiest cases
$
 E(\lambda)\cdot m= E(\lambda)\cdot n, 
$ apply \eqref{eq:measure} directly to obtain
\begin{align}
\frac{1}{i}  \langle L(m,n) P_c\phi (\xi^{lin})^{m},\ \phi (\xi^{lin})^{n}\rangle
=C C_m \int_{\mathbb{S}^2} \widehat{\phi(\xi^{lin})^m}(C_m\sigma) \ \overline{\widehat{\phi(\xi^{lin})^n}}(C_m\sigma)\ d\sigma \label{eq:directFa}
\end{align} with $C_{m}\in \mathbb{C}$ defined as $$C_m:=\sqrt{E(\lambda)\cdot m-\lambda},$$ here $E(\lambda)\cdot m-\lambda$ is positive by the conditions that $2e_k<e_0$ and $E_k(\lambda)\approx e_0-e_k ,\ k=1,2,\cdots, N,$ and $\lambda\approx e_0.$ Recall that $m\in (\mathbb{Z}^{+}\cup\{0\})^{N}$ and $|m|=2.$

For the cases $ E(\lambda)\cdot m\not= E(\lambda)\cdot n, $ we claim, for some constant $C_{m,n}$
\begin{align}
\frac{1}{i}  \langle L(m,n) P_c\phi (\xi^{lin})^{m},\ \phi (\xi^{lin})^{n}\rangle
=&C C_m^{\frac{1}{2}} C_n^{\frac{1}{2}} \int_{\mathbb{S}^2} \widehat{\phi(\xi^{lin})^m}(C_m\sigma) \ \overline{\widehat{\phi(\xi^{lin})^n}}(C_n\sigma)\ d\sigma\nn\\
&+C_{m,n} E(\lambda)\cdot (m-n).\label{eq:claim88}
\end{align} 
If the claim holds, this together with \eqref{eq:directFa}, Lemma \ref{LM:simplify} and the fact $E_k(\lambda)=e_0-e_k+\cO(e_0-\lambda)$ by \eqref{eq:Enen}, implies the desired result \eqref{eq:ReThet1}.

What is left is to prove the claim \eqref{eq:claim88}. We start with decomposing 
the left hand side into two parts
\begin{align}
\frac{1}{i}  \langle L(m,n) P_c\phi (\xi^{lin})^{m},\ \phi (\xi^{lin})^{n}\rangle=A+B
\end{align} with
\begin{align*}
A:=&\frac{1}{i}  \langle L(m,m) P_c\phi (\xi^{lin})^{m},\ \phi (\xi^{lin})^{n}\rangle,\\
B:=&\frac{1}{i}  \langle [L(m,n)-L(m,m)] P_c\phi (\xi^{lin})^{m},\ \phi (\xi^{lin})^{n}\rangle.
\end{align*} By \eqref{eq:measure}, it is easy to see that 
\begin{align}
A=&C C_m \int_{\mathbb{S}^2} \widehat{\phi(\xi^{lin})^m}(C_m\sigma) \ \overline{\widehat{\phi(\xi^{lin})^n}}(C_m\sigma)\ d\sigma\\
=&C C_m^{\frac{1}{2}} C_n^{\frac{1}{2}} \int_{\mathbb{S}^2} \widehat{\phi(\xi^{lin})^m}(C_m\sigma) \ \overline{\widehat{\phi(\xi^{lin})^n}}(C_n\sigma)\ d\sigma+C_{m,n} E(\lambda)\cdot (m-n),\nn
\end{align} where $C_{m,n}$ is a constant, and in the second step we use that the functions $\widehat{\phi(\xi^{lin})^m}$ and the scalar $C_m$ depend smoothly on $E(\lambda)\cdot m$ and $E(\lambda)\cdot n$. 

For $B$, it is easy to see that for some constant $D_{m,n},$ $$B=D_{m,n} E(\lambda)\cdot (m-n).$$

Collecting the estimates above, we prove the claim \eqref{eq:claim88}.

Hence the proof is complete.

\subsubsection{Proof of Lemma \ref{LM:simplify}}\label{Pa:simplify}
We rewrite the expression in \eqref{eq:theta1k} as
\begin{align}
Re\sum_{k=1}^{N}\bar{z}_k \Theta_1(k)=-\left<
\sum_{|m|=2} R_{m,0}z^{m},\ A
\right>\label{eq:Aori}
\end{align} with the vector function $A$ defined as
\begin{align*}
A:=\left(
\begin{array}{lll}
-i\phi^{\lambda} (z\cdot \eta)^2+3i \phi^{\lambda} (z\cdot \xi)^2\\
2\phi^{\lambda} z\cdot \xi\ z\cdot \eta
\end{array}
\right).
\end{align*}

Apply the estimates of $\phi^{\lambda},$ $\xi$ and $\eta$ in 
\eqref{eq:LambdaPhi2} and \eqref{eq:projection} to obtain
\begin{align}
A=C_1 (e_0-\lambda)^{\frac{1}{2}}\phi (z\cdot \xi^{lin})^2\left(
\begin{array}{lll}
i \\
1
\end{array}
\right)+\cO((e_0-\lambda)^{\frac{3}{2}}|z|^2),\label{eq:expressA}
\end{align} here the expansion is in the space $\langle x\rangle^{-4}\mathcal{L}^2$ for some $\epsilon_0>0,$ $C_1>0$ is a constant.

Now we turn to $R_{m,0}$, which is defined as
\begin{align*}
R_{m,0}=\big[ L(\lambda)+iE(\lambda)\cdot m-0\big]^{-1} P_c^{\lambda} JN_{m,0};
\end{align*} and for $JN_{m,0}$ we have, from \eqref{eq:SecondOrderTerm},
\begin{align}
\sum_{|m|=2}JN_{m,0} z^{m}=\sum_{|m|=2}\left(
\begin{array}{lll}
ImN_{m,0}\\
-ReN_{m,0}
\end{array}
\right)z^{m}=\frac{1}{4} \left(
\begin{array}{cc}
-2i \phi^{\lambda} z\cdot \xi \ z\cdot \eta\\
-3\phi^{\lambda} (z\cdot \xi)^2+\phi^{\lambda} (z\cdot \eta)^2
\end{array}
\right)\label{eq:JNm0}
\end{align}

Now we extract the main part of $R_{m,0}$ and $JN_{m,0}$ by applying the estimates of $\phi^{\lambda},$ $\xi$ and $\eta$ and $P_c$ in 
\eqref{eq:LambdaPhi2} and \eqref{eq:projection}, and use that 
\begin{align*}
\big[ L(\lambda)+iE(\lambda)\cdot m-0\big]^{-1}=\big[(-\Delta+V+\lambda)J+i E(\lambda)\cdot m-0\big]^{-1}+\cO(e_0-\lambda)
\end{align*}
to obtain
\begin{align}
R_{m,0}
=& C_2 (e_0-\lambda)^{\frac{1}{2}}\ z^{m}\big[(-\Delta+V+\lambda)J+i E(\lambda)\cdot m-0\big]^{-1} P_c \phi  (\xi^{lin})^m \left(
\begin{array}{lll}
-i\\
1
\end{array}
\right)\nn\\
&+\cO((e_0-\lambda)^{\frac{3}{2}}) z^{m}
\label{eq:rmo}
\end{align}
where $C_2>0$ is a constant, the expansion is in the space $\langle x\rangle^{4}\mathcal{L}^2$.

Put this and \eqref{eq:expressA} into \eqref{eq:Aori} to obtain the desired result.

\begin{flushright}
$\square$
\end{flushright}

\subsection{Proof of \eqref{eq:ReThet2}}
To illustrate the ideas we consider part of it, namely
\begin{align}
\sum_{k}\bar{z}_k\tilde\Theta_2(k):=&\left< \overline{X_{1}}\left(
\begin{array}{lll}
\displaystyle\sum_{|m|=2}z^{m}\sum_{n=1}^{N}P_{m,0}^{(n)}\xi_{n}\\
\displaystyle\sum_{|m|=2}z^{m}\sum_{n=1}^{N}Q_{m,0}^{(n)}\eta_{n}
\end{array}
\right), \ 
\left(
\begin{array}{lll}
z\cdot \eta\\
-iz\cdot \xi
\end{array}
\right)\right>\nonumber.
\end{align} The other part is similar, hence omitted.
Compute directly to obtain
\begin{align}
&\sum_{k}\bar{z}_k\tilde\Theta_2(k)\nn\\
=
&\left< \left(
\begin{array}{lll}
\displaystyle\sum_{|m|=2}z^{m}\sum_{n=1}^{N}P_{m,0}^{(n)}\xi_{n}\\
\displaystyle\sum_{|m|=2}z^{m}\sum_{n=1}^{N}Q_{m,0}^{(n)}\eta_{n}
\end{array}
\right), \ 
(\overline{X_{1}})^*\left(
\begin{array}{lll}
z\cdot \eta\\
-iz\cdot \xi
\end{array}
\right)\right>\nonumber\\
=&4\left< \left(
\begin{array}{lll}
\displaystyle\sum_{|m|=2}z^{m}\sum_{n=1}^{N}P_{m,0}^{(n)}\xi_{n}\\
\displaystyle\sum_{|m|=2}z^{m}\sum_{n=1}^{N}Q_{m,0}^{(n)}\eta_{n}
\end{array}
\right), \ 
i\left(
\begin{array}{lll}
\displaystyle\sum_{|l|=2}ReN_{l,0}z^{l}\\
\displaystyle\sum_{|l|=2}ImN_{l,0}z^{l}
\end{array}
\right)\right>\nonumber\\
=& -4i \sum_{n=1}^{N}\sum_{|m|=2}\sum_{|l|=2} [ P_{m,0}^{(n)} z^{m} \ \langle \xi_n,\  ReN_{l,0}z^{l}\rangle+Q_{m,0}^{(n)} z^{m} \ \langle \eta_n,\  ImN_{l,0}z^{l}\rangle ]
\end{align} 
where, in the second last step we used the definition of $ReN_{l,0}$ and $ImN_{l,0}$, $|l|=2,$ in \eqref{eq:JNm0}. Next, relate the definitions of $P_{m,0}^{(n)}$ and $Q_{m,0}^{(k)}$ in \eqref{eq:pq20and02} to $\langle \eta_n,\ ImN_{m,0}\rangle$ and $\langle \xi_n,\ ReN_{m,0}\rangle$, and then take the real part for $\sum_{k}\bar{z}_k\tilde\Theta_2(k)$ to obtain
\begin{align}
Re\ \sum_{k}\bar{z}_k\tilde\Theta_2(k)
=& 4\sum_{n=1}^{N} \sum_{|m|=|l|=2}Re\big[ [P_{m,0}^{(n)}   \overline{Q_{l,0}^{(n)}} -Q_{m,0}^{(n)}\  \overline{P_{l,0}^{(n)}} ]\ l\cdot E(\lambda)\ z^{m}\bar{z}^{l}\big]\nonumber\\
=& 2\sum_{n=1}^{N} \sum_{|m|=|l|=2}Re\big[ [P_{m,0}^{(n)}   \overline{Q_{l,0}^{(n)}} -Q_{m,0}^{(n)}\  \overline{P_{l,0}^{(n)}} ]\ l\cdot E(\lambda)\ z^{m}\bar{z}^{l}\big]\nonumber\\
&+2\sum_{n=1}^{N} \sum_{|m|=|l|=2}Re\big[ [\overline{P_{m,0}^{(n)}}   Q_{l,0}^{(n)} -\overline{Q_{m,0}^{(n)}}\  P_{l,0}^{(n)} ]\ l\cdot E(\lambda)\ \bar{z}^{m} z^{l}\big]\nonumber\\
=&2\sum_{n=1}^{N} \sum_{|m|=|l|=2}Re\big[ [P_{m,0}^{(n)}   \overline{Q_{l,0}^{(n)}}-Q_{m,0}^{(n)}\  \overline{P_{l,0}^{(n)}} ]\ (l-m)\cdot E(\lambda)\ z^{m}\bar{z}^{l} \big]\nn\\
=&\sum_{|m|=|l|=2} C_{m,l}\ (l-m)\cdot E(\lambda) z^{m}\bar{z}^{l}\label{TildeTheta}
\end{align} where, in the second step we used a simple fact that $Re\ a=Re\ \bar{a}$, for any parameter $a$, for the second term in the second step, we interchange the indices $m$ and $l$ to obtain the third step, and in the last step $C_{m,l}$ are constants naturally defined.

The proof is complete by observing that \eqref{TildeTheta} is the desired result.

\subsection{Proof of \eqref{eq:ReThet3}}
It is easy to see that for $j=3,4,$ $ \displaystyle\sum_{k=1}^{N}\bar{z}_{k}\Theta_{j}(k)$ are purely imaginary. Hence by taking the real part,
\begin{align}
Re\ \sum_{k=1}^{N}\bar{z}_{k}\Theta_{j}(k)=0.
\end{align}

Next
we turn to $\Theta_5.$ Use the definition of $X_1$ and compute directly to obtain 
\begin{align*}
\sum_{k=1}^{N}\bar{z}_{k}\Theta_{5}(k)=-\left\langle \sum_{|m|=|n|=1}R_{m,n}z^{m}\bar{z}^{n},\ \left(
\begin{array}{lll}
i\phi^{\lambda}|z\cdot \eta|^2+3i\phi^{\lambda} |z\cdot \xi|^2\\
\phi^{\lambda}[ \bar{z}\cdot \xi \ z\cdot\eta- z\cdot \xi\cdot \bar{z}\cdot \eta]
\end{array}
\right) \right\rangle
\end{align*}
By noticing that $\displaystyle\sum_{|m|=|n|=1}R_{m,n}z^{m}\bar{z}^{n}$ is real by definition, and that the other vector function is purely imaginary, we have that the inner product is purely imaginary, hence
\begin{align}
Re\ \sum_{k=1}^{N}\bar{z}_{k}\Theta_{5}(k)=0.
\end{align}

Collect the estimates above to complete the proof.

\section{Proof of the Main Theorem \ref{THM:MainTheorem}}\label{ProveMain} 
The proof is almost identical to the part in \cite{GaWe}, hence we only sketch it.

We begin by introducing a family of space-time norms, $ Z(T),\ {\cal R}_j(T)$,  for measuring the decay of the $z(t)$ and  $\vec{R}(t)$  for $0\le t\le T$, with $T$ arbitrary and large.
We then prove that this family of norms satisfy a set of coupled inequalities,
from which we can infer the desired large time asymptotic behavior. 
Define
\begin{equation}
T_{0}:=|z(0)|^{-1},\label{Todef}
\end{equation}
where, recall that $|z(0)|$ is the initial amount of mass of neutral modes, see Theorem \ref{THM:MainTheorem}.

Now we define the controlling functions:
\begin{equation}\label{majorant}
\begin{array}{lll}
Z(T):=\displaystyle\max_{t\leq T}(T_{0}+t)^{\frac{1}{2}}|z(t)|,& &
\mathcal{R}_{1}(T):=\displaystyle\max_{t\leq
T}(T_{0}+t)\|\langle x\rangle^{-\nu} \vec{R}(t)\|_{\mathcal{H}^{3}},\\
\mathcal{R}_{2}(T):=\displaystyle\max_{t\leq
T}(T_{0}+t)\|\vec{R}(t)\|_{\infty},& &
\mathcal{R}_{3}(T):=\displaystyle\max_{t\leq
T}(T_{0}^{\frac{2}{3}}+t)^{\frac{7}{5}}\|\langle x\rangle^{-\nu}\tilde{R}(t)\|_{2},\\
\mathcal{R}_{4}(T):=\displaystyle\max_{t\leq
T}\|\vec{R}(t)\|_{\mathcal{H}^{3}},& &
\mathcal{R}_{5}(T):=\displaystyle\max_{t\leq
T}\frac{(T_{0}+t)^{\frac{1}{2}}}{\log(T_{0}+t)}\|\vec{R}(t)\|_{3}\ .\\
\end{array}
\end{equation}

To estimate $\mathcal{R}_{k}$, $k=1,2,3,5,$ or to control the dispersion $\vec{R}$, we use the propagator estimate, namely for any function $g\in \mathcal{L}^{1},$
\begin{align}
\|e^{tL(\lambda)}P_c^{\lambda}g\|_{\infty}\lesssim t^{-\frac{3}{2}}\|g\|_1.
\end{align} To estimate the decay of $|z|$, we define a positive constant $q$ by $$q^2=|z|^2+F(z,\bar{z})$$ where $F(z,\bar{z})$ is real and of order $|z|^4$, as stated in Statement C of Theorem \ref{GOLD:maintheorem}. By Statement C of Theorem \ref{GOLD:maintheorem}
\begin{align}
\frac{d}{dt}|q|^2\leq -C|q|^{4}+\cdots.
\end{align} If the omitted part is small, then we have $|q|^{2}=|z|^2\lesssim (1+t)^{-1}.$

The details are identical to the corresponding parts in \cite{GaWe}, and the proof is tedious. Hence we omit the detail.
The results are:
\begin{proposition}\label{prop:majorants}
\begin{align*}
\mathcal{R}_{1}\lesssim &T_{0}\|\langle x\rangle^{\nu}\vec{R}(0)\|_{\mathcal{H}^{2}}+\mathcal{R}_{4}\mathcal{R}_{2}+Z^{2}+T_{0}^{-\frac{1}{2}}[Z^{3}+Z\mathcal{R}_{1}+\mathcal{R}_{1}^{2}+\mathcal{R}_{2}^{2}].\\
\mathcal{R}_{2}\lesssim &T_{0}\|\vec{R}(0)\|_{1}+T_{0}\|\vec{R}(0)\|_{\mathcal{H}^{2}}+Z^{2}+\mathcal{R}_{4}^{2}\mathcal{R}_{2}+T_{0}^{-\frac{1}{2}}[Z^{3}+Z\mathcal{R}_{1}+\mathcal{R}_{1}^{2}].\\
\mathcal{R}_{3}\lesssim  &
T_{0}\|\langle x\rangle^{\nu}\vec{R}(0)\|_{2}+T_{0}^{\frac{3}{2}}|z|^{2}(0)+T_{0}^{-\frac{1}{20}}
(Z^{3}
+Z\mathcal{R}_{3}+Z\mathcal{R}_{1}+\mathcal{R}_{5}^{3}+R_{2}^{2}\mathcal{R}_{4}).\\
\mathcal{R}_{4}^{2}\lesssim  &
\|\vec{R}(0)\|^{2}_{\mathcal{H}^{2}}+T_{0}^{-1}[\mathcal{R}_{1}^{2}+Z^{2}\mathcal{R}_{1}+Z^{2}\mathcal{R}_{1}^{2}+\mathcal{R}_{4}^{2}\mathcal{R}_{2}^{2}].\\
\mathcal{R}_{5}\lesssim  & T_{0}\|\vec{R}(0)\|_{1}+T_{0}\|\vec{R}(0)\|_{\mathcal{H}^{2}}+Z^{2}+T_{0}^{-\frac{1}{2}}[\mathcal{R}_{5}^{2}\mathcal{R}_{2}+Z^{4}+Z\mathcal{R}_{3}+\mathcal{R}_{1}^{2}+\mathcal{R}_{2}^{2}].\\
Z(T) \lesssim  &1 + \frac{1}{T_0^{2\over5}}\ Z(T)\ \left(\ Z(T)+ \cR_1^2(T)+\cR_2^2(T)+Z(T)\cR_3(T)\ \right)
\end{align*}

\end{proposition}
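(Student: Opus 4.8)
The plan is to derive the six bounds by feeding the evolution equations obtained above into the linear decay estimate $\|e^{tL(\lambda)}P_c^{\lambda}g\|_{\infty}\lesssim t^{-3/2}\|g\|_1$ (together with its weighted $L^2\to L^2$ and $L^{4/3}\to L^4$ analogues) and, for the amplitude $z$, into a Riccati-type comparison argument built on the normal-form identity \eqref{eq:detailedDescription}. Throughout one works on a fixed interval $[0,T]$ and treats $Z,\mathcal{R}_1,\dots,\mathcal{R}_5$ as known finite quantities, aiming to produce closed inequalities among them; the smallness factors $T_0^{-1/2},T_0^{-1/20},T_0^{-2/5}$ are generated by integrating the time weights in \eqref{majorant}, and $T_0=|z(0)|^{-1}$ is large by \eqref{InitCond}.

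For $\mathcal{R}_1$, $\mathcal{R}_2$ and $\mathcal{R}_5$ one writes $\vec{R}(t)$ through Duhamel's formula applied to \eqref{RAfProj},
$$\vec{R}(t)=e^{tL(\lambda(0))}P_c\vec{R}(0)+\int_0^t e^{(t-s)L(\lambda(s))}P_c\big[-J\vec{N}(\vec{R},z)+L_{(\dot\lambda,\dot\gamma)}\vec{R}+\mathcal{G}\big]\,ds,$$
with the usual correction coming from the $t$-dependence of $P_c^{\lambda(t)}$, and then inserts the dispersive bounds. The source terms split as in Theorem~\ref{GOLD:maintheorem}: the quadratic-in-$z$ part of $J\vec{N}$ (the forcing defining $R_{m,n}$) produces the $Z^2$ contributions; the bounds $\dot\lambda,\dot\gamma-\Upsilon=Remainder$ from Proposition~\ref{Prop:ExplicitPolyno} make $L_{(\dot\lambda,\dot\gamma)}\vec{R}$ and $\mathcal{G}$ contribute the $\mathcal{R}_4\mathcal{R}_2$- and $Z^2$-type terms; the localized cubic part $Loc$ of $N_2$ is controlled by \eqref{eq:Loc-est}, giving $Z^3+Z\mathcal{R}_1$; and the nonlocal part $(R_1^2+R_2^2)J\vec{R}$ of \eqref{eq:NonlocDef} forces the $\mathcal{R}_2^2$, $\mathcal{R}_4^2\mathcal{R}_2$ and $\mathcal{R}_5^3$ terms once $L^\infty$ and $L^3$ norms are distributed. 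The $\mathcal{R}_3$ estimate is the same computation run on \eqref{eq:tildeR}, except that $P_cS_2(z,\bar{z})$ is $\cO(|z|^3)$ and, by the explicit resolvent form \eqref{eq:unusual} of its coefficients, has enough local decay that integrating against the heavier weight $(T_0^{2/3}+t)^{7/5}$ costs only $T_0^{-1/20}$. Finally $\mathcal{R}_4$, the undifferentiated $\mathcal{H}^3$ norm, is an energy estimate: one propagates $\|\vec{R}(t)\|_{\mathcal{H}^3}$ by Duhamel using that $e^{tL(\lambda)}P_c$ is bounded on $\mathcal{H}^3$ uniformly in $t$, and bounds the inhomogeneity by quantities already controlled, the $T_0^{-1}$ factor coming from the $(T_0+s)^{-2}$-type decay of the $\mathcal{H}^3$ source.

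The crucial bound is the one on $Z(T)$, and here one uses the normal-form equation \eqref{eq:detailedDescription} of Theorem~\ref{GOLD:maintheorem}: setting $q^2:=|z|^2+F(z,\bar{z})$, comparable to $|z|^2$ since $F=\cO(|z|^4)$, one has
$$\frac{d}{dt}q^2=-\Gamma(z,\bar{z})+|z|\,Remainder(t),$$
and the Fermi Golden Rule lower bound $\Gamma(z,\bar{z})\ge C|z|^4\ge C'q^4$ from \eqref{Gammadef} makes this a differential inequality of Riccati type. Expanding $Remainder(t)$ by \eqref{remainder} and replacing each ingredient by its majorant bound from \eqref{majorant} controls the forcing by $C(T_0+t)^{-5/2}$ (and a $(T_0^{2/3}+t)^{-7/5}$-weighted piece) times a polynomial in the majorants. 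Passing to $w(t):=(T_0+t)q^2(t)$ one gets $\dot w\le \tfrac{w}{T_0+t}(1-C'w)+(T_0+t)\cdot(\text{forcing})$, so the dissipative term prevents $w$ from exceeding a fixed multiple of $w(0)=T_0|z(0)|^2\sim T_0^{-1}$ plus the accumulated forcing $\int_0^T(T_0+s)\cdot(\text{forcing})\,ds$; since each such integral carries a negative power of $T_0$, taking the maximum over $t\le T$, using $q^2\sim|z|^2$, and taking a square root yields an inequality of exactly the stated form, the precise exponent $-2/5$ being dictated by the slowest of the relevant time integrals (the one tied to $\mathcal{R}_3$'s weight $T_0^{2/3}$).

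\textbf{Main obstacle.} Conceptually the hard part — guaranteeing that \eqref{eq:detailedDescription} holds with the full positive $\Gamma(z,\bar{z})$ on the right and no surviving small-denominator terms even in the presence of resonances — has already been carried out in Sections~\ref{NormalForms}--\ref{sec:NFT} and Proposition~\ref{prop:smallDivisor}, so for the present proposition the real difficulty is the bookkeeping: tracking the exact powers of $(T_0+t)$ through the Duhamel integrals so that each produces the claimed negative power of $T_0$, and in particular handling the nonlocal term $(R_1^2+R_2^2)J\vec{R}$, which, not being spatially localized, cannot be absorbed by the local-decay estimates and is the sole reason the $\mathcal{H}^3$ majorants $\mathcal{R}_4,\mathcal{R}_5$ must enter the system at all. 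Verifying that the resulting coupled inequalities are of the contractive shape needed to close a bootstrap — every nonlinear term on the right carrying either a negative power of $T_0$ or at least two majorant factors — is the step I expect to be the most delicate to get exactly right.
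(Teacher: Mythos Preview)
Your outline is essentially the one the paper follows (deferring to \cite{GaWe}): Duhamel plus dispersive decay for $\mathcal{R}_1,\mathcal{R}_2,\mathcal{R}_3,\mathcal{R}_5$, an energy-type argument for $\mathcal{R}_4$, and a Riccati comparison built on \eqref{eq:detailedDescription} for $Z$. Two technical points deserve sharpening, however.

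First, in the Duhamel estimates you treat $L_{(\dot\lambda,\dot\gamma)}\vec{R}$ as a source controllable by the bound $\dot\gamma-\Upsilon=Remainder$. But $\dot\gamma$ itself is only $\cO(|z|^2)\sim (T_0+t)^{-1}Z^2$, and $\dot\gamma P_c J\vec{R}$ is neither spatially localized nor in $L^1$, so it cannot be fed into the weighted-$L^2$ or $L^1\to L^\infty$ dispersive bounds as written. The paper (following Buslaev--Sulem) fixes a reference $\lambda_1$ and splits $P_c^{\lambda_1}J=i(P_+-P_-)+(\text{spatially localizing operator})$, where $P_\pm$ project onto the two branches of $\sigma_{ess}(L(\lambda_1))$; the piece $i(\dot\gamma+\lambda-\lambda_1)(P_+-P_-)$ commutes with $L(\lambda_1)$ and is absorbed into the propagator as a harmless phase $e^{a(t,s)(P_+-P_-)}$, while only the localized remainder $O_1\vec R$ enters the Duhamel integral. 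Without this step the time integrals for $\mathcal{R}_1,\mathcal{R}_2,\mathcal{R}_5$ diverge logarithmically.

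Second, for $\mathcal{R}_4$ your description (``energy estimate \dots\ by Duhamel using that $e^{tL}P_c$ is bounded on $\mathcal{H}^3$'') conflates two different arguments. The paper does not use Duhamel here: it differentiates $\langle(-\Delta+1)\vec R,(-\Delta+1)\vec R\rangle$ directly, and the key cancellation comes from $J^*=-J$ together with the self-adjointness of $JL(\lambda)$, which kills the non-localized part of the $L(\lambda)+\dot\gamma J$ contribution. This is what produces the stated inequality for $\mathcal{R}_4^2$ rather than $\mathcal{R}_4$, and it sidesteps the phase issue above without any spectral splitting. A naive Duhamel bound on $\|\vec R\|_{\mathcal{H}^3}$ would again face the non-integrable $\dot\gamma J\vec R$ source.
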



\subsection{Proof of the Main Theorem ~\ref{THM:MainTheorem}}\label{subsec:proofsubmaintheorem}

Our goal is to prove that the functions $Z$ and $\mathcal{R}_{k},\ k=1,2,\cdots,5$, are uniformly bounded, using the estimates in Proposition \ref{prop:majorants}.

Define
$M(T):=\displaystyle\sum_{n=1}^{4}\mathcal{R}_{n}(T)$ and
\begin{equation}\label{defineS}
S:=T_{0}(\|\vec{R}(0)\|_{\mathcal{H}^{2}}+\|\langle x\rangle^{\nu}\vec{R}(0)\|_{2}+\|\vec{R}(0)\|_1)+\|\vec{R}(0)\|_{\mathcal{H}^2}+T_0^{\frac{3}{2}}|z_0|^2,
\end{equation} where, recall the definition of $T_{0}$ after
\eqref{majorant}. By the conditions on the initial condition \eqref{InitCond}
we have that $\mathcal{R}_{4}(0)$ is small, $M(0)$, $Z(0)$ and $S$ are
bounded. Then by the estimates in Proposition \ref{prop:majorants} we
obtain 
\begin{align}
M(T)+Z(T)\leq \mu(S), \text{and}\ \mathcal{R}_{4}\ll 1,
\end{align} where
$\mu$ is a bounded function if $S$ bounded. This together with the definitions of $\mathcal{R}_k,\ k=1,2,\cdots,5,$ implies that
\begin{align}
\|\langle x\rangle^{-\nu}\vec{R}\|_{2},\ \|\vec{R}\|_{\infty}\leq
c(T_{0}+t)^{-1},\ |z(t)|\leq c(T_{0}+t)^{-\frac{1}{2}}\label{equa619a}
\end{align} which is part of Statements (A) and (C) in Theorem
~\ref{THM:MainTheorem}. The rest of (A), Equation
\eqref{eq:detailedDescription1}, is proved in
\eqref{eq:detailedDescription}.

\begin{flushright}
$\square$
\end{flushright}

\end{document}